\def\C{{\mathbf C}}
\def\R{{\mathbf R}}
\def\Z{{\mathbf Z}}
\def\Q{{\mathbf Q}}
\def\A{{\mathbf A}}
\newtheorem{theorem}{Theorem}[subsection]
\newtheorem{lemma}[theorem]{Lemma}
\newtheorem{proposition}[theorem]{Proposition}
\newtheorem{corollary}[theorem]{Corollary}
\theoremstyle{definition}
\newtheorem{definition}[theorem]{Definition}
\theoremstyle{remark}
\newtheorem{remark}[theorem]{Remark}
\newcommand{\mm}[4]{\left(\begin{smallmatrix} #1 & #2\\ #3 & #4\end{smallmatrix}\right)}
\DeclareMathOperator{\SO}{SO}
\DeclareMathOperator{\GSpin}{GSpin}
\DeclareMathOperator{\Sp}{Sp}
\DeclareMathOperator{\SU}{SU}
\DeclareMathOperator{\SL}{SL}
\DeclareMathOperator{\GL}{GL}
\DeclareMathOperator{\charf}{char}
\DeclareMathOperator{\diag}{diag}
\DeclareMathOperator{\pr}{pr}
\def\g{{\mathfrak g}}
\def\k{{\mathfrak k}}
\def\p{{\mathfrak p}}
\def\m{{\mathfrak m}}
\def\so{{\mathfrak {so}}}
\def\sl{{\mathfrak {sl}}}
\def\m{{\mathfrak m}}
\def\n{{\mathfrak n}}
\def\Wh{{\mathcal W}}
\def\Vm{{\mathbb{V}}}
\begin{document}
\title{Modular forms on indefinite orthogonal groups of rank three}
\author{Aaron Pollack}
\address{Department of Mathematics\\ Duke University\\ Durham, NC USA}
\email{apollack@math.duke.edu}
\thanks{The author has been supported by the Simons Foundation via Collaboration Grant number 585147.}

\begin{abstract} We develop a theory of modular forms on the groups $\mathrm{SO}(3,n+1)$, $n \geq 3$.  This is very similar to, but simpler, than the notion of modular forms on quaternionic exceptional groups, which was initiated by Gross-Wallach and Gan-Gross-Savin.  We prove the results analogous to those of earlier papers of the author on modular forms on exceptional groups, except now in the familiar setting of classical groups.  Moreover, in the setting of $\mathrm{SO}(3,n+1)$, there is a family of absolutely convergent Eisenstein series, which are modular forms.  We prove that these Eisenstein series have algebraic Fourier coefficients, like the classical holomorphic Eisenstein series on $\mathrm{SO}(2,n)$.  As an application, we prove that the so-called ``next-to-minimal'' modular form on quaternionic $E_8$ has rational Fourier expansion, under a mild local assumption.
\end{abstract}
\maketitle


\section{Introduction} That there is a notion of \emph{modular forms} on the quaternionic exceptional groups goes back to Gross-Wallach and Gan-Gross-Savin.  This theory is based on the so-called quaternionic discrete series, whose study was initiated by Gross-Wallach \cite{grossWallach1}, \cite{grossWallach2}.  The modular forms on the quaternionic exceptional groups have been the subject of the papers \cite{ganGrossSavin, weissman, pollackQDS, pollackE8, pollackG2}. It turns out that there is a completely analogous but much simpler theory of ``modular forms" on the classical groups $\SO(3,n+1)$. (Note that when $n$ is even, these groups do not have discrete series.)  The purpose of this paper is to write down this notion of modular forms, and prove a few of the basic theorems. In particular, we
\begin{enumerate}
\item find the explicit form of the Fourier expansion of such modular forms, in complete analogy with the results of \cite{pollackQDS};
\item prove that certain absolutely convergent degenerate Eisenstein series that are modular forms have algebraic Fourier coefficients.\end{enumerate}

One source of examples of these modular forms comes from certain constant terms of modular forms on the quaternionic exceptional groups.  More precisely, suppose $G_J$ is\footnote{The subscript ``$J$" comes from the fact that these groups are associated to certain cubic Jordan algebras $J$.} a quaternionic exceptional group as in \cite{pollackQDS} with rational root type $F_4$, so that $G_J$ has absolute Dynkin type $F_4, E_6, E_7$ or $E_8$.  Then $G_J$ possesses a maximal parabolic $Q_J = L_J V_J$ with $L_J$ having rational root type $B_3$.  Up to anisotropic factors, $L_J$ is isogenous to a group $\SO(3,n+1)$ where $n = 3,4, 6, 10$ if $G_J$ has type $F_4, E_6, E_7, E_8$, respectively.  One can take the constant term of a modular form of weight $\ell$ on $G_J$ down to $L_J$, and we prove in section \ref{sec:Const} that these constant terms are modular forms of weight $\ell$ on $L_J$.  As an application of the above facts, we prove that the so-called ``next-to-minimal" modular form on quaternionic $E_8$ has rational Fourier expansion.

The result (2) on the Fourier coefficients of Eisenstein series is the analogue of the fact that on $\SO(2,n)$ or another hermitian tube domain, the absolutely convergent holomorphic Eisenstein series have algebraic Fourier expansions.  As this paper shows, the notion of modular forms on $\SO(3,n+1)$ is very similar to that of modular forms on the quaternionic exceptional groups, such as $E_{8,4}$.  However, because $\SO(3,n+1)$ is a classical group, and more importantly because the natural Fourier expansion of modular forms on $\SO(3,n+1)$ takes place along an \emph{abelian} unipotent group, the notion of these modular forms is substantially simpler than that on the exceptional groups.  Thus we hope that $\SO(3,n+1)$ can be used as a test case for developing analogous results on the quaternionic exceptional groups.  In particular, the algebraicity of the Fourier coefficients of the degenerate Eisenstein series from \cite{pollackE8} appears difficult.  Part of the motivation for writing this paper was to get closer to proving that the Fourier coefficients of these Eisenstein series on exceptional groups are algebraic.

The definition of the modular forms on $\SO(3,n+1)$ is very similar to that of the modular forms on the exceptional groups from \cite{pollackQDS}.  In particular, if $V$ is a rational quadratic space of signature $(3,n+1)$, then the maximal compact subgroup $K$ of $\SO(V)(\R)$ is $S(O(3) \times O(n+1))$.  This group maps to $O(3) = (\SU(2)/\mu_2) \rtimes \langle \pm 1 \rangle$.  Denote by $\Vm_{\ell} = Sym^{2\ell}(\C^2)$ the $(2\ell+1)$-dimensional representation of $K$ that factors through $O(3)$.  \emph{Modular forms} on $\SO(V)$ of weight $\ell$ are then $\Vm_{\ell}$-valued automorphic functions $\varphi$ on $\SO(V)(\A)$ that
\begin{enumerate}
\item satisfy $\varphi(gk) = k^{-1} \varphi(g)$ for all $g \in \SO(V)(\A)$ and $k \in K$
\item and are annihilated by a special differential operator $\mathcal{D}_{\ell}$.
\end{enumerate}
The precise definition of modular forms, and in particular of the operator $\mathcal{D}_{\ell}$, is given in section \ref{sec:MFFE} below.  Throughout the paper, $(V,q)$ is rational quadratic space of Witt rank three and signature $(3,n+1)$ over $\R$.  Denote by $(x,y) = q(x+y)-q(x)-q(y)$ denote the associated bilinear form.  We write $V = \Q e \oplus V' \oplus \Q f$ with $V'$ a non-degenerate quadratic space of signature $(2,n)$ and $e, f$ isotropic vectors in $(V')^\perp$ with $(e,f) = 1$.  We let $G=\SO(V)$ act on the left of $V$. 

The first result is the Fourier expansion of modular forms on $G=\SO(V)$, in complete analogy to Theorem 1.2.1 of \cite{pollackQDS}.  Denote by $P = MN$ the parabolic subgroup of $\SO(V)$ that stabilizes the isotropic line $\Q e$, so that $M \simeq \GL_1 \times \SO(V')$ and $N \simeq V'$ is abelian.  Let $n: V' \rightarrow N$ denote this identification, which is specified in section \ref{sec:notation} below.  If $\varphi$ is an automorphic form on $G$, then one has 
\begin{equation}\label{eqn:FE1} \varphi(g) = \sum_{\eta \in V'(\Q)}{\varphi_\eta(g)}\end{equation}
where
\[\varphi_\eta(g) = \int_{V'(\Q)\backslash V'(\A)}{\psi^{-1}((\eta,x))\varphi(n(x)g)\,dx}\]
and $\psi: \Q\backslash \A \rightarrow \C^\times$ is our fixed standard additive character.  

The first result Theorem \ref{thm:IntroFE} is a refinement of the expansion \eqref{eqn:FE1} when $\varphi$ is a modular form of weight $\ell$ on $G$.  See Definition \ref{def:Weta} below for the precise definition of the functions $\Wh_{\eta}: G \rightarrow \Vm_{\ell}$ that appear in Theorem \ref{thm:IntroFE}. They are defined in terms of $K$-Bessel functions, exactly as in the Fourier expansion of the modular forms on the quaternionic exceptional groups in \cite{pollackQDS}. In section \ref{sec:notation} we specify a basis $\{x^{2\ell}, x^{2\ell-1}y, \ldots, xy^{2\ell-1}, y^{2\ell}\}$ of $\Vm_{\ell}$.
 
\begin{theorem}\label{thm:IntroFE} Suppose $\varphi$ is a modular form of weight $\ell \geq 1$ on $G$.  Then for $\eta \in V'(\Q)$ with $q(\eta) \geq 0$, there are locally constant functions $a_{\varphi}(\eta): G(\A_f) \rightarrow \C$ so that
\begin{equation}\label{eqn:ThmFE} 
\varphi(g) = \varphi_0(g) + \sum_{0 \neq \eta \in V'(\Q), q(\eta) \geq 0}{ a_{\varphi}(\eta)(g_f) \Wh_{2\pi \eta}(g_\infty)}
\end{equation}
for every $g = g_f g_\infty$ in $G(\A_f) \times G(\R)$.  Moreover, for $m \in M$, the constant term $\varphi_0$ is of the form
\[\varphi_0(m) = t^{\ell}|t|\left(\Phi(m)x^{2\ell} + \beta(m_f) x^{\ell}y^{\ell} + \Phi'(m) y^{2 \ell}\right)\]
where $\Phi$ is an automorphic function associated to a holomorphic modular form of weight $\ell$ on $M$, $\beta$ is a locally constant function on $M(\A_f)$, and $\Phi'$ is a certain $(K \cap M)$-right translate of $\Phi$.\end{theorem}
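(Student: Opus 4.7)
The plan is to carry out the analysis in close parallel to the proof of Theorem 1.2.1 of \cite{pollackQDS}, taking advantage of the key simplification that the unipotent radical $N \simeq V'$ is abelian. Starting from the abelian Fourier expansion \eqref{eqn:FE1}, every Fourier coefficient satisfies $\varphi_\eta(n(x)g) = \psi((\eta,x))\varphi_\eta(g)$ for $x \in V'(\A)$, so $\varphi_\eta$ is determined by its restriction to a set of representatives for $N \backslash G$. Using the Iwasawa decomposition $G(\R) = N(\R) M(\R) K$, it suffices to describe $\varphi_\eta$ on $M(\R)$, where it is a $\Vm_\ell$-valued smooth function, and then to verify that the functions produced by the analysis of the differential equations do reassemble into an automorphic function of the claimed shape.

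The core computation is to translate the annihilation equation $\mathcal{D}_\ell \varphi = 0$ into a system of ODEs on $M(\R)$. First, I would fix a basis of $\g$ adapted to the decomposition $\g = \bar{\n} \oplus \m \oplus \n$ with $\n \simeq V'$, and rewrite $\mathcal{D}_\ell$ in this basis; then, on the $\eta$-isotypic component, every derivative in an $\n$-direction $X$ becomes multiplication by $2\pi i (\eta, X)$, turning $\mathcal{D}_\ell \varphi_\eta = 0$ into a finite system of first-order linear ODEs for the coordinates of $\varphi_\eta|_{M(\R)}$ in a weight basis of $\Vm_\ell$. Separating variables along the $\GL_1$-factor of $M$ and along the symmetric domain of $\SO(V') \cong \SO(2,n)$, one reduces to a Bessel-type equation whose moderate-growth solutions for $q(\eta) > 0$ are the modified $K$-Bessel functions built into $\Wh_{2\pi\eta}$. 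The $q(\eta) < 0$ case is ruled out by moderate growth, while the boundary case $q(\eta)=0$ with $\eta \neq 0$ has a one-dimensional moderate-growth solution space matching Definition \ref{def:Weta}. The local constancy of the resulting coefficients $a_\varphi(\eta)(g_f)$ follows from the $K_f$-finiteness of $\varphi$.

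For the constant term $\varphi_0$, the $N$-derivative terms vanish and the ODE degenerates. Along the $\GL_1$-factor of $M$, the equation forces the homogeneity factor $t^\ell |t|$. Restricted to $\SO(V')(\R)$ and expanded in the highest, middle and lowest $K \cap M$-weight pieces of $\Vm_\ell$, the remaining components of $\mathcal{D}_\ell \varphi_0 = 0$ become exactly the Cauchy--Riemann-type equations characterizing a holomorphic modular form of weight $\ell$ on the hermitian symmetric domain for $\SO(2,n)$: the $x^{2\ell}$-component recovers $\Phi$, the $y^{2\ell}$-component is its longest Weyl-element translate, and the $x^\ell y^\ell$-component, sitting in the $K \cap M$-invariant line of $\Vm_\ell$, reduces to a locally constant function on $M(\A_f)$. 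The main technical obstacle is the explicit diagonalisation of $\mathcal{D}_\ell$ in the weight basis of $\Vm_\ell$ and the correct matching of normalizations so that the moderate-growth solutions produced by separation of variables agree on the nose with the $\Wh_{2\pi\eta}$ of Definition \ref{def:Weta}; once this bookkeeping is in place, the statement of the theorem follows by reassembling the pieces.
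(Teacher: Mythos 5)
Your plan follows the paper's proof essentially verbatim: the paper writes $D_\ell$ out in $\n+\m+\k$ coordinates (Proposition \ref{thm:Dcoefs}), specializes to the $\eta$-isotypic component so that the $V'$-derivatives become multiplication by $i(\eta,\cdot)$, solves the resulting difference-differential system by a Bessel ODE whose moderate-growth solution is the $\Wh_\eta$ of Definition \ref{def:Weta} (Theorem \ref{thm:KBessel}), and handles the constant term via the degenerate $\eta=0$ system, which kills all components except $v\in\{-\ell,0,\ell\}$ and yields the holomorphicity conditions (Corollary \ref{cor:eta0}) — exactly as you propose. The one point to sharpen is the vanishing for $q(\eta)<0$: moderate growth only selects the $K$-Bessel solution over the $I$-Bessel one, and the actual vanishing comes from Lemma \ref{lem:etaPos} — for such $\eta$ there exists $m$ with $u_\eta(t,m)=0$, where $K_v$ diverges, so smoothness of $F$ across that locus forces the constant to be zero.
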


The second theorem concerns the Fourier expansion of degenerate Eisenstein series on $G$.  More precisely, if $\ell > n+1$ is even and $n$ is even then there is (a family of) absolutely convergent Eisenstein series $E_{\ell}(g)$, which are modular forms of weight $\ell$ on $G$.  These Eisenstein series are associated to the induction space $Ind_{P}^{G}(\delta_P^{(\ell+1)/(n+2)})$.  See section \ref{sec:Eis} for the precise definition.  These degenerate Eisenstein series are the analogues of the degenerate Heisenberg Eisenstein series considered in \cite{pollackE8} or the classical degenerate holomorphic Siegel Eisenstein series on $\Sp_{2n}$.  Theorem \ref{thm:IntroEis} below states that the Fourier coefficients of $E_{\ell}(g)$ are algebraic numbers.

To set up the result, suppose that $\ell > 0$ is even, and $\varphi$ a modular form on $G$ of weight $\ell$.  Let the Fourier expansion of $\varphi$ be as in \eqref{eqn:ThmFE}. We say that $\varphi$ has Fourier coefficients in a field $E$ if
\begin{enumerate}
\item The locally constant functions $a_{\varphi}(\eta)$ are valued in $E$;
\item The holomorphic modular form associated to $\Phi$ has Fourier coefficients in $E$;
\item The locally constant function $\beta$ is valued in $E \cdot \frac{\zeta(\ell+1)}{(2\pi)^{\ell}}$.
\end{enumerate}
The perhaps unusual-looking normalization of the constant $\beta$ is dictated, similar to the results of \cite{pollackE8}, by the fact that the modular forms one constructs in practice have Fourier coefficients valued in some fixed field $E$ as in the above definition.

\begin{theorem}\label{thm:IntroEis} Suppose that $\dim(V') = n+2$ is a multiple of $4$ and that $\ell > n+1$ is even.  Then the Eisenstein series $E_{\ell}(g)$ has $\overline{\Q}$-valued Fourier coefficients.\end{theorem}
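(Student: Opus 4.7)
The plan is to derive an explicit formula for the Fourier expansion of $E_\ell$ by the standard unfolding technique and then to verify algebraicity of each ingredient place-by-place, in close parallel to Shimura's treatment of holomorphic Siegel Eisenstein series on $\SO(2,n)$ and to the treatment of the degenerate Heisenberg Eisenstein series in \cite{pollackE8}.  Write $E_\ell(g) = \sum_{\gamma \in P(\Q)\backslash G(\Q)} f_\ell(\gamma g)$, where $f_\ell \in \ind_P^G(\delta_P^{(\ell+1)/(n+2)})$ is the standard flat section whose finite components are the spherical vectors taking value $1$ on $K_v$ and whose archimedean component $f_{\ell,\infty}$ is the unique (up to scalar) vector whose image in the minimal $K$-type $\Vm_\ell$ is normalized so that $E_\ell$ is a modular form of weight $\ell$ in the sense of section \ref{sec:MFFE}.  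The Bruhat decomposition $P(\Q)\backslash G(\Q)/P(\Q)$ organizes the Fourier expansion into constant and non-constant pieces.

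For the non-constant coefficients at $\eta \neq 0$ with $q(\eta) \geq 0$, only the open Bruhat cell contributes, and the $\eta$-th coefficient factors as a product of local Jacquet-type integrals $\int_{N(\Q_v)} f_{\ell,v}(w_0 n g_v) \psi_v^{-1}((\eta,n))\,dn$.  At each finite place this is a rational function of $q_v^{-s}$ evaluated at $s = (\ell+1)/(n+2)$, hence algebraic.  At the archimedean place, an Iwasawa-coordinate computation converts the integral into an integral representation of a $K$-Bessel function, and should match $\Wh_{2\pi\eta}(g_\infty)$ up to an algebraic multiple, exactly as in the classical Siegel Eisenstein computation; this in particular confirms that coefficients at $\eta$ with $q(\eta)<0$ vanish.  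Multiplying over all places gives $a_{E_\ell}(\eta)(g_f) \in \overline{\Q}$.

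For the constant term, Langlands' formula produces $E_{\ell,0}(g) = f_\ell(g) + M(w_0, f_\ell)(g)$.  The first summand, restricted to $m \in M \simeq \GL_1 \times \SO(V')$ and stripped of the $t^\ell|t|$ factor from the inducing character, is the restriction of a holomorphic degenerate Siegel-type Eisenstein series on $\SO(V')$ (which has signature $(2,n)$ and acts on a hermitian tube domain); Shimura's classical algebraicity theory identifies $\Phi$ with such an object and provides its algebraic Fourier expansion.  The intertwining operator factors over places: the finite-place operators are computed by Gindikin--Karpelevich-type formulas as algebraic rational functions of local data, while the archimedean operator $M_\infty(s) f_{\ell,\infty}$ at $s = (\ell+1)/(n+2)$ is a ratio of Gamma values which, combined with the global $L$-values appearing in the constant-term formula and Euler's evaluation of $\zeta$ at positive even integers, produces exactly an algebraic multiple of $\zeta(\ell+1)/(2\pi)^\ell$.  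This yields the required normalization of $\beta$, and since $\Phi'$ is a $(K \cap M)$-translate of $\Phi$ its algebraicity is automatic.

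The main obstacle is the archimedean analysis: one must first verify that $f_{\ell,\infty}$ produces an $E_\ell$ satisfying $\mathcal{D}_\ell E_\ell = 0$, then compute the archimedean Jacquet-type integral explicitly enough to match $\Wh_{2\pi\eta}$ up to an algebraic constant, and finally evaluate the archimedean intertwining integral precisely enough to pin down the $\zeta(\ell+1)/(2\pi)^\ell$ normalization.  Once these archimedean computations are in hand, the $p$-adic algebraicity reduces to standard manipulations of local degenerate principal series, and the algebraicity of $\Phi$ follows from the classical theory of holomorphic Eisenstein series on $\SO(2,n)$.
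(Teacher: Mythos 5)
Your overall architecture (constant term via Langlands, non-constant coefficients as local integrals, archimedean matching with $\Wh_{2\pi\eta}$) is the same as the paper's, but the proposal has gaps at exactly the two places where the theorem is actually hard, plus one misattribution. First, the archimedean rank-two computation is not something that ``should match $\Wh_{2\pi\eta}$ up to an algebraic multiple, exactly as in the classical Siegel Eisenstein computation''; it is the main theorem of the paper (Theorem \ref{thm:FTAinv}), and the paper does \emph{not} compute the Jacquet-type integral $\int_{V'(\R)}e^{-2\pi i(\omega,x)}f_\ell(wn(x),s=\ell+1)\,dx$ directly in Iwasawa coordinates. Instead it runs the computation backwards: it computes the Fourier transform of the candidate answer $q(\omega)^{\ell-n/2}\Wh_{2\pi\omega}(1)$ (the integrals $I_v(x;\ell)$ of \eqref{eqn:Ivx}), which requires the $J$-Bessel reduction \eqref{JBes1}, an Appell $F_4$ evaluation from \cite{kOIII}, the reduction to ${}_2F_1$, and the combinatorial identity of Lemma \ref{lem:2F1Sum} to reassemble the $2\ell+1$ components into $(a_1x^2+2b_1xy-a_1^*y^2)^\ell/\tau^{2\ell+1}$ --- and then invokes Fourier inversion, which is legitimate only because $N$ is abelian and the forward integral is absolutely convergent. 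Without carrying out some version of this, you have not established that the non-degenerate coefficients are $K$-Bessel functions at all, nor pinned down the power of $2\pi$ in the proportionality constant, which is needed below.

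Second, for the rank two coefficients you conclude ``multiplying over all places gives $a_{E_\ell}(\eta)(g_f)\in\overline{\Q}$'' from the algebraicity of each local factor; but the product over the unramified places is infinite, and each factor being algebraic proves nothing. The paper shows (Proposition \ref{prop:Jfte}) that the unramified local factor is $\zeta_p(s-\dim(V')/2+1)^{-1}$, so the unramified product at $s=\ell+1$ is $\zeta(\ell+2-\dim(V')/2)^{-1}$, and one needs $\ell+2-\dim(V')/2$ to be a positive \emph{even} integer so that this is a rational number times a power of $\pi$ that cancels against the archimedean constant and the global normalization $\pi^{-\ell}$. This is precisely where the hypothesis that $\dim(V')$ is a multiple of $4$ enters, and your proposal never uses that hypothesis. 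Finally, a smaller point on the constant term: the paper shows that the intertwined term $M(w,s)f_\ell$ \emph{vanishes} at $s=\ell+1$ (the $c$-function $c_\ell^{B_3}(s)$ of Proposition \ref{prop:longInter} is zero there when $\ell>n+1$); the $\beta$-component valued in $\Q\cdot\zeta(\ell+1)/(2\pi)^\ell$ comes from the identity-cell term $f_\ell(g,\Phi_f,s=\ell+1)$ itself (whose finite part is essentially $\zeta(\ell+1)$), not from the intertwining operator as you assert. Relatedly, your claim that ``only the open Bruhat cell contributes'' for all $\eta\neq 0$ is false for isotropic $\eta$: there the open-cell contribution \eqref{eqn:etaInt2} vanishes and the surviving rank one coefficient comes from an intermediate cell, which is why the paper treats rank one and rank two coefficients separately.
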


Note that under the assumptions of Theorem \ref{thm:IntroEis}, the group $G(\R) = \SO(3,n+1)$ does not possess discrete series.  Nevertheless, the modular forms exist and one can prove that the most basic modular forms--the degenerate Eisenstein series--have algebraic Fourier coefficients.  See Theorem \ref{thm:EisAlgFC} for the precise statement.

The main application of the above results is to the next-to-minimal modular form on quaternionic $E_8$.  Namely, we prove that it has rational Fourier expansion, under a certain mild local assumption. The result is as follows.  In the statement of the theorem, the group $G_J$ is the $\Q$-group of type $E_8$ from, e.g., \cite{pollackQDS} or \cite{pollackE8}, that has rational root system of type $F_4$.

\begin{theorem}\label{thm:introNTM} Let $E_J(g,s;8)$ denote the degenerate Heisenberg Eisenstein series on $G_J$ that is spherical at every finite place and ``weight $8$" at infinity.  Then $E_J(g,s;8)$ is regular at $s=9$ and defines a square integrable modular form of weight $8$ at this point.  The modular form $\theta_{ntm}(g)=E_J(g,s=9;8)$ has rational rank zero, rank one, and rank two Fourier coefficients.  In particular, if Property $V$ of section \ref{sec:ntm} holds for a single finite prime $p$, the modular form $E_J(g,s=9;8)$ has rational Fourier expansion.\end{theorem}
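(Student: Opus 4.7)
The plan is to proceed in four stages.

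\textbf{Step 1: Regularity at $s=9$ and the modular-form property.} I would first compute the constant terms of $E_J(g,s;8)$ along the two maximal parabolics $P_J$ (Heisenberg) and $Q_J$ (with Levi $L_J$ of rational root type $B_3$). Each constant term is a finite sum of intertwined sections, and the potential singularities at $s=9$ are controlled by explicit ratios of completed $L$-functions. Verifying that these $L$-ratios are finite at $s=9$ yields regularity; applying Langlands' square-integrability criterion to the explicit constant terms gives square-integrability. The annihilation by $\mathcal{D}_{8}$ follows because the archimedean section is by construction the weight-$8$ vector in the induced representation, which is annihilated by $\mathcal{D}_{8}$ at every $s$.

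\textbf{Step 2: Rank zero Fourier coefficient.} This is the constant term of $\theta_{ntm}$ along the Heisenberg unipotent $N_J$, and is computed directly from the inducing data by the two-term formula $f_{s}+M(w_{0},s)f_{s}$ at $s=9$. The nonarchimedean factors of $M(w_{0},9)$ are rational, and the archimedean gamma factor at the critical value $s=9$ produces the expected power of $\pi$; with the normalization $\zeta(\ell+1)/(2\pi)^{\ell}$ built into the definition of $\beta$ in the notion of ``Fourier coefficients in $E$'', the rank-zero coefficient is rational.

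\textbf{Step 3: Rank one and rank two Fourier coefficients.} Here the constant-term construction of section \ref{sec:Const} is central: the constant term $\mathrm{CT}_{V_{J}}(\theta_{ntm})$ along the unipotent radical $V_{J}$ of $Q_{J}$ is, up to an anisotropic factor, a weight-$8$ modular form on $\SO(3,11)$. Unfolding $E_{J}(g,s;8)$ against $V_{J}$ expresses $\mathrm{CT}_{V_{J}}(\theta_{ntm})$ as a linear combination of explicit Eisenstein-type modular forms on $\SO(3,11)$, whose Fourier coefficients I would analyze by the method behind Theorem \ref{thm:IntroEis}. On the $G_{J}$ side, the rank-one and rank-two Fourier coefficients of $\theta_{ntm}$ are indexed by $L_{J}(\Q)$-orbits of elements of $W_{J}$ of the corresponding Freudenthal rank, and a direct orbit-matching argument identifies each such coefficient with a specific Fourier coefficient of $\mathrm{CT}_{V_{J}}(\theta_{ntm})$ on $\SO(3,11)$. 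Rationality then follows by tracking the local data through this identification.

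\textbf{Step 4: Property V and the full Fourier expansion.} Property V at a single finite prime $p$ is the local hypothesis, to be introduced in section \ref{sec:ntm}, which forces the Fourier coefficients of $\theta_{ntm}$ of Freudenthal rank at least three to vanish, confirming that $\theta_{ntm}$ is genuinely next-to-minimal. Combined with Steps 2 and 3 this yields the rational Fourier expansion claim. The main obstacle I expect is Step 3: matching $L_{J}(\Q)$-orbits on $W_{J}$ of rank two to Fourier coefficients of a classical modular form on $\SO(3,11)$, and then squeezing rationality (rather than merely $\overline{\Q}$-algebraicity) out of the Eisenstein coefficient formula. The rank-two orbit analysis is subtler than the rank-one case, and one must also confirm that the $\SO(3,11)$ Eisenstein series appearing at the relevant value of its induction parameter is still in a regime where the argument of Theorem \ref{thm:IntroEis} (or a minor adaptation of it) applies.
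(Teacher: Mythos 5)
Your overall architecture matches the paper's: regularity and the modular-form property via constant-term and intertwining-operator computations, the rank zero coefficient from the constant term along $N_J$, the rank one and rank two coefficients by passing to the constant term along $V_J$ (a modular form on a group isogenous to $\SO(3,11)$) and invoking the Eisenstein machinery there, and Property V to kill rank $\geq 3$. Two of your steps, however, contain genuine errors. First, in Step 1 your justification for $\mathcal{D}_{8}\theta_{ntm}=0$ is wrong: the weight-$\ell$ inducing section is \emph{not} annihilated by $\mathcal{D}_{\ell}$ at every $s$ --- by Lemma \ref{lem:Dfl} (and its analogue on $G_J$ from \cite{pollackE8}) one has $\mathcal{D}_{\ell} f \doteq (s-\ell-1)\cdot(\text{another section})$, so the annihilation holds only at $s=\ell+1$. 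Worse, $s=9$ lies far outside the range of absolute convergence of $E_J(g,s;8)$, so even at $s=9$ you cannot differentiate the series term by term. The paper instead computes the constant term of $E_J(g,s=9;8)$ along the minimal parabolic $P_0$ (Propositions \ref{prop:JEis8} and \ref{prop:E8intert}), checks that only two Weyl terms survive and that $\mathcal{D}_8$ kills that constant term, and only then deduces $\mathcal{D}_8 E_J(g,s=9;8)=0$.

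Second, your Step 2 ``two-term formula'' for the constant term along $N_J$ is incomplete: the constant term of a degenerate Heisenberg Eisenstein series along $N_J$ also contains a Siegel-type Eisenstein series on the Levi $H_J = GE_{7,3}$ --- here Kim's weight $8$ singular modular form --- and the rationality of the rank zero coefficient requires Kim's theorem that this form has rational Fourier coefficients; this cannot be read off from $f_s + M(w_0,s)f_s$ alone. On Step 3 you are on the right track, but note that the identification of the rank one and rank two coefficients of $\theta_{ntm}$ with those of the constant term along $V_J$ (Proposition \ref{prop:CHrk1rk2}) is not a formal orbit-matching statement valid for arbitrary automorphic forms: it relies on the explicit generalized Whittaker functions of \cite{pollackQDS} and their invariance under the stabilizer of $\omega$ in $H_J^1(\R)$ (property \eqref{eqn:WhitInv}), which is what converts the extra integration over $(V_J\cap H_J)(\Q)\backslash(V_J\cap H_J)(\A)$ into a mere volume factor. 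You correctly flag that the $\SO(3,11)$ Eisenstein series at $s=9$ is outside the range of absolute convergence; the paper handles this in Proposition \ref{prop:B3type8} by further intertwiner computations and by observing that Corollary \ref{cor:FTrank2} only needs $\ell \geq n/2$ rather than $\ell > n+1$.
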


See Theorem \ref{thm:ntmRat} below for the precise statement.  The Eisenstein series $E_J(g,s;\ell)$ are the subject of \cite{pollackE8}.  The modular form $E_J(g,s=9;8)$ is expected to be the next-to-minimal modular form on $G_J$.  In the case of split $E_8$, the next-to-minimal automorphic representation has been considered recently in \cite{GGKPS}.  

The \emph{minimal} modular form $\theta_{Gan}$ on quaternionic $E_8$ was considered in \cite{ganATM, ganSW, pollackE8}; it is of weight $4$.  It is the $E_8$-analogue of Kim's weight $4$ exceptional modular form \cite{kim} on $GE_{7,3}$, and in fact Kim's weight $4$ modular form appears in the constant term of $\theta_{Gan}$ along the unipotent radical of the Heisenberg parabolic.  The \emph{next-to-minimal} modular form $\theta_{ntm}$ that is the subject of Theorem \ref{thm:introNTM} is weight $8$ and is the analogue of Kim's weight $8$ singular modular form on $GE_{7,3}$ from \cite{kim}.  Moreover, Kim's singular modular form shows up in the constant term of $\theta_{ntm}$ along the same Heisenberg parabolic.

The final result we give is to the minimal modular form on the groups $\SO(3,8k+3)$ and to a so-called distinguished modular form on $\SO(3,8k+2)$.  This is done is section \ref{sec:minD}, and is the analogue of the results in \cite{pollackE8} to the classical groups of type $D_{4k+3}$.  Specifically, we prove the following theorem; see Theorem \ref{thm:minTypeD} below.

\begin{theorem} Let $k \geq 1$ be an integer, and let $G$ be the $\Q$-group of type $D_{4k+3}$ that is split at every finite place and $\SO(3,8k+3)$ at infinity.  The Eisenstein series $E_{4k}(g,s)$ is regular at $s=4k+1$.  The value $\theta(g) = E_{4k}(g,s=4k+1)$ is a modular form on $G$ of weight $4k$, having rational Fourier expansion with all non-degenerate Fourier coefficients equal to $0$.  Its restriction $\theta'$ to groups $G' = \SO(3,8k+2) \subseteq G$ is a modular form of weight $4k$ that is distinguished.\end{theorem}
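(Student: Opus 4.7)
The plan is to follow the residue/special-value strategy used in \cite{pollackE8} for the Heisenberg Eisenstein series on quaternionic $E_8$, adapted to the degenerate Eisenstein series $E_{4k}(g,s)$ on $G = \SO(3,8k+3)$. With $\ell = 4k$ and $n+1 = 8k+3$, we sit below the absolutely convergent range $\ell > n+1$ of Theorem \ref{thm:IntroEis}, so the series must be continued meromorphically and the point $s = 4k+1$ handled with care. The archimedean section defining $E_{4k}(g,s)$ lies in $\mathrm{Ind}_P^G(\delta_P^{(\ell+1)/(n+2)})$ and is fixed so that the weight-$4k$ $K$-equivariance and annihilation by $\mathcal{D}_{4k}$ hold at every $s$, so these modular form conditions pass automatically to any regular value.

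To establish regularity at $s = 4k+1$, I would first compute the constant term of $E_{4k}(g,s)$ along the parabolic $P = MN$ via the Gindikin--Karpelevich formula. The resulting ratio of completed zeta values coming from the $M$-intertwining operator, together with the parities $\ell = 4k$ even and $\dim V' = 8k+4 \equiv 0 \pmod{4}$, should force cancellation at $s = 4k+1$ between numerator zeros and denominator poles, yielding regularity as in \cite{pollackE8}. Rationality of the Fourier expansion of $\theta(g) = E_{4k}(g, s=4k+1)$ is then obtained via the local analysis underlying Theorem \ref{thm:IntroEis}: at each finite place the unramified local computation of the coefficient is a ratio of local zeta values which, at this explicit integer point $s = 4k+1$ and by the rationality of values of $\zeta$ at negative odd integers, evaluates to a rational number; the constant term has its $\Phi, \beta$ portions matching the shape of Theorem \ref{thm:IntroFE}, again with rational coefficients.

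The vanishing of all non-degenerate Fourier coefficients $a_\theta(\eta)$ (those with $q(\eta) > 0$) is the heart of the theorem and, as in \cite{pollackE8}, is the main obstacle. The plan is to show that the archimedean factor of $a_\varphi(\eta)$ at $s = 4k+1$ vanishes for every $\eta$ with $q(\eta) > 0$. Using the explicit Fourier expansion of Theorem \ref{thm:IntroFE}, the non-degenerate Fourier coefficient can be written as an archimedean integral against $\Wh_{2\pi\eta}$ times a local Schwartz--Bruhat section; this integral is a meromorphic function of $s$ whose evaluation at $s = 4k+1$ is either zero by direct cancellation with a numerator factor or, more robustly, by arguing globally that $\theta$ lies in a small automorphic representation whose wavefront set is contained in the ``singular'' nilpotent orbit, forcing the vanishing of every $q(\eta) > 0$ coefficient.

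For the final assertion, the inclusion $G' = \SO(3,8k+2) \hookrightarrow G$ is compatible with the maximal compact subgroups, and the $K$-representation $\Vm_{\ell}$ restricts to the analogous weight-$\ell$ representation of the compact subgroup of $G'$; the operator $\mathcal{D}_{\ell}$ descends, so the restriction $\theta' = \theta|_{G'}$ is a weight $4k$ modular form on $G'$ with rational Fourier expansion inherited from $\theta$. Distinguishedness of $\theta'$, in the sense to be defined in section \ref{sec:minD}, should follow from pulling back the rank-one Fourier support of $\theta$ to $G'$ using the explicit relation between the bilinear form on $V'$ and on the quadratic subspace cut out by $G'$; verifying this reduces to identifying the surviving Fourier coefficients with those of the expected distinguished form, which I expect to follow the template of \cite{pollackE8}.
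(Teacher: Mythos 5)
Your overall strategy (intertwining operators for regularity, smallness of the representation for the vanishing, restriction for distinguishedness) matches the paper's, but two of your key steps have genuine problems. First, the claim that the archimedean section is ``annihilated by $\mathcal{D}_{4k}$ \ldots at every $s$, so these modular form conditions pass automatically to any regular value'' is false: Lemma \ref{lem:Dfl} shows $\sqrt{2}\,D_{\ell} f^{1}_{\ell}(g,s) = (s-\ell-1) f_{\ell}^2(g,s)$, so the section is killed by $D_\ell$ \emph{only} at $s=\ell+1$, and since $s=4k+1$ lies outside the range of absolute convergence you cannot simply evaluate the factor $(s-\ell-1)$ at $s=\ell+1$ without knowing that the companion Eisenstein series built from $f_\ell^2$ is regular there. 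The paper instead computes the constant term along $P$ at $s=\ell+1$ (the long intertwiner's $c$-function vanishes there, leaving the inducing section plus a holomorphic Eisenstein series on the Levi), checks via Corollary \ref{cor:eta0} that $D_\ell$ annihilates this constant term, and only then concludes $D_\ell E = 0$. Second, your primary mechanism for the vanishing of the $q(\eta)>0$ coefficients --- vanishing of the archimedean factor --- cannot work: by Theorem \ref{thm:KBessel} the generalized Whittaker function $\Wh_{2\pi\eta}$ is nonzero exactly when $q(\eta)\geq 0$, and Theorem \ref{thm:FTAinv} exhibits the archimedean Fourier transform as a nonzero multiple of $q(\omega)^{\ell-n/2}\Wh_{2\pi\omega}(1)$. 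The vanishing is a non-archimedean phenomenon: the paper identifies the spherical constituent of $Ind_P^G(|\nu|^{4k+1})$ at a finite place with the minimal representation of split $D_{4k+3}$ (citing Magaard--Savin and Savin), whose twisted Jacquet modules for non-degenerate characters vanish. Your ``more robust'' fallback via the wavefront set is essentially this argument, but as stated it is an assertion rather than a proof --- it itself requires the minimal-representation identification.

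Two further points are underdeveloped. The rationality of the rank one coefficients outside the convergent range is not just the unramified local computation: one must also show that the second term \eqref{eqn:etaInt2} in the unfolding vanishes at $s=4k+1$, which in the paper requires additional intertwining-operator calculations (both the finite $c$-functions and the archimedean factors $c_j(w',s)$). And for distinguishedness, the concrete mechanism is that $a_{\theta'}(\eta)$ for $\eta \in V'_\omega$ is a sum of coefficients $a_\theta(\eta + c\omega)$ over $c\in\Q$; since only isotropic vectors support nonzero coefficients of $\theta$, a nonzero contribution forces $q'(\eta) = -c^2 q'(\omega) \in (\Q^\times)^2(-q(\omega))$. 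Your sketch points in this direction but does not isolate this computation, which is the entire content of the final assertion.
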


\subsection{Acknowledgements} It is a pleasure to thank Solomon Friedberg, Dmitry Gourevitch, Henrik Gustafsson, Axel Kleinschmidt, Daniel Persson, Boris Pioline, Siddhartha Sahi and Gordan Savin for helpful conversations and correspondence related to this work.  It is also a pleasure to thank the Simons Center for Geometry and Physics, and the workshop Automorphic Structures in String Theory (2019) for hosting an inspiring conference where many of these conversations took place.  

\section{Notation}\label{sec:notation} In this section we define the notation that we will use throughout the paper. Let $(V_2,q_2)$ denote a two-dimensional rational quadratic space with positive definite quadratic form.  Similarly, let $(V_n,q_n)$ denote an $n$-dimensional rational quadratic space with positive definite quadratic form.  Set $V' = V_2 \oplus V_n$ with quadratic form $q(x,y) = q_2(x) - q_n(y)$, so that $V'$ has signature $(2,n)$.  We set $V = \Q e \oplus V' \oplus \Q f$ with quadratic form $q(\alpha e + v' + \beta f) = \alpha \beta + q'(v')$.  Thus $V$ has signature $(3,n+1)$.  For some of the results below, we will assume $V'$ has Witt rank two, although this is not necessary everywhere.

Let $\iota$ be the involution on $V$ given by $\iota(\alpha e + x + y + \beta f) = \beta e + x - y +\alpha f$, where $x \in V_2$, $y \in V_n$.  Then $(v,\iota(v)) \geq 0$, and conjugation by $\iota$ is a Cartan involution $\theta_\iota$ on $\SO(V)(\R)$.  We set $u_{+} = e+f$ and $u_{-} = e-f$ so that $q(u_+) = 1 $ and $q(u_-) = -1$.  We let $v_1, v_2$ be an orthonormal basis of $V_2(\R)$ so that $(v_i,v_j) = \delta_{ij}$, and $\{u_1, u_2, \ldots, u_n\}$ be a basis of $V_n$.

We set $V_3 = V_2 \oplus \R u_+$ and $V_{n+1} = V_n \oplus \R e_{-}$.  The induced Cartan involution on the Lie algebra $\g_0 = \so(V)$ produces the decomposition $\g_0 = \k_0 \oplus \p_0$ with $\k_0 = \g_0^{\theta_\iota=1}$ and $\p_0 = \g_0^{\theta_\iota = -1}$.  Under the isomorphism $\g_0 \simeq \wedge^2 V$, one has $\p_0 = V_3 \otimes V_{n+1} \subseteq \wedge^2 V$ and $\k_0 = \wedge^2 V_3 \oplus \wedge^2 V_{n+1} \subseteq \wedge^2 V$.  We set $\p = \p_0 \otimes \C$ and $\k = \k_0 \otimes \C$.

The Lie algebra $\wedge^2 V_3 \otimes \C \subseteq \k$ is isomorphic to $\sl_2(\C)$.  For an $\sl_2$-triple $(E,H,F)$ in $\wedge^2 V \otimes \C$, one can take $E = (iv_1-v_2) \wedge u_{+}/\sqrt{2}$, $H = -2i v_1 \wedge v_2$, $F = (i v_1 + v_2) \wedge u_{+}/\sqrt{2}$.  Then $[E,F] = H$, $[H,E] = 2E$ and $[H,F] = -2F$, so that indeed $(E, H, F)$ is an $\sl_2$-triple.

Denote by $P = MN$ the parabolic subgroup of $G$ that fixes the line $\Q e$.  We are letting $G$ act on the left of $V$.  Denote by $\nu: P \rightarrow \GL_1$ the character so that $p e = \nu(p) e$. We let $M$ be the Levi subgroup that also fixes the line $\Q f$.  Denote by $N$ the unipotent radical of $P$.  Then $N \simeq V'$ is abelian, and for $x \in V'$, we set $n(x) = \exp( e \wedge x)$.  Thus $n: V' \rightarrow N$ is an isomorphism.  One has $n(x) = \exp(e \wedge x)$ takes $e \mapsto e$, $v \mapsto v + (x,v) e$ if $v \in V'$, and $f \mapsto f - x - \frac{1}{2}(x,x) e$.  The matrix corresponding to $n(x)$ is $\left(\begin{array}{ccc} 1 & \,^tx & -(x,x)/2 \\ & 1 & -x \\ && 1 \end{array}\right)$.  

As mentioned in the introduction, we let $\Vm_\ell$ denote the $(2\ell+1)$-dimensional representation of $K \subseteq \SO(3,n+1)$ that factors through $O(3)$.  Let $x,y$ be a fixed weight basis of the two-dimensional representation $Y_2 \simeq \C^2$ of $\wedge^2 V_3 \otimes \C \simeq \sl_2(\C)$.  We may identify $V_3 \otimes \C$ with the symmetric square representation $S^2(Y_2)$ of this two-dimensional representation, which has basis $\{x^2, xy, y^2\}$.  We choose this weight basis $x,y$ and the identification $S^2(Y_2) \simeq V_3 \otimes \C$ so that $x^2$ corresponds to $iv_1 -v_2$, $xy$ corresponds to $u_+/\sqrt{2}$, and $y^2$ corresponds to $iv_1 + v_2$.

Throughout the paper, the letter $H$ denotes a hyperbolic plane.  Moreover, we frequently use the subscript $0$ to denote an integral lattice inside a rational quadratic space.  Thus, for example $H_0 \cong \Z \oplus \Z$.

\section{Modular forms and their Fourier expansion}\label{sec:MFFE} In this section we define the modular forms on $G = \SO(V)$, and give the explicit form of their Fourier expansion.  That is, we prove Theorem \ref{thm:IntroFE} of the introduction.

\subsection{Definition of modular forms}\label{subsec:MFdef} We now define modular forms on $G = \SO(V)$.  As mentioned in the introduction, a modular form on $G$ of weight $\ell$ is an automorphic function $\varphi: G(\Q)\backslash G(\A) \rightarrow \Vm_\ell$ of moderate growth satisfying
\begin{enumerate}
\item $\varphi(gk) = k^{-1}\cdot \varphi(g)$ for all $g \in G(\A)$ and $k \in K$
\item $D_{\ell} \varphi \equiv 0$ for a certain linear differential operator $D_{\ell}$ defined below.
\end{enumerate}

To define the differential operator $D_{\ell}$, let $X_\gamma$ be a basis of $\p$ and $X_\gamma^\vee$ be the dual basis of $\p^\vee$.  Suppose $\varphi: G(\A) \rightarrow \Vm_\ell$ satisfies $\varphi(gk) = k^{-1} \varphi(g)$.  Define $\widetilde{D_{\ell}}\varphi = \sum_{\gamma}{X_\gamma \phi \otimes X_\gamma^\vee}$, which is valued in $\Vm_{\ell} \otimes \p^\vee$.  Here $X_\gamma \varphi$ denotes the right-regular action of $\p$ on $\varphi$.  Note that
\[\Vm_\ell \otimes \p^\vee = (S^{2\ell}(Y_2) \otimes S^2(Y_2)) \boxtimes V_{n+1} = (S^{2\ell+2}(Y_2) \oplus S^{2\ell}(Y_2) \oplus S^{2\ell-2}(Y_2)) \boxtimes V_{n+1}.\]
Denote by $\pr$ the $K$-equivariant projection $\Vm_\ell \otimes \p^\vee \rightarrow (S^{2\ell}(Y_2) \oplus S^{2\ell-2}(Y_2)) \boxtimes V_{n+1}$.  We define $D_{\ell} = \pr \circ \widetilde{D}$. 

Note that $S^2(Y_2) \subseteq S^1(Y_2) \otimes S^1(Y_2)$, and thus $\pr$ is also the composition
\begin{align*} \Vm_\ell \otimes \p^\vee  \subseteq (S^{2\ell}(Y_2) \otimes S^1(Y_2) \otimes S^1(Y_2)) \boxtimes V_{n+1} &= (S^{2\ell+1}(Y_2) \oplus S^{2\ell-1}(Y_2)) \otimes S^1(Y_2) \boxtimes V_{n+1}\\ &\rightarrow S^{2\ell-1}(Y_2) \otimes (S^1(Y_2) \boxtimes V_{n+1}).\end{align*}
This last line makes clear the analogy between modular forms on $\SO(3,n+1)$ and modular forms in the sense of \cite{pollackQDS}.

\subsection{The Fourier expansion of modular forms}\label{subsec:FEMF} In this subsection we give the precise Fourier expansion of modular forms on $G$.  More precisely, suppose $\ell \geq 1$, $\eta \in V'(\R)$.  We say that a function $F: G(\R) \rightarrow \Vm_{\ell}$ is a generalized Whittaker function of type $\eta$ if $F$ is of moderate growth and satisfies
\begin{enumerate}
\item $F(n(x)g) = e^{i (\eta,x)} F(g)$ 
\item $F(gk) = k^{-1} \cdot F(g)$
\item $D_{\ell} F(g) = 0$
\end{enumerate}
for all $g \in G(\R)$, $k \in K$ and $x \in V'(\R)$.  In this subsection, we completely characterize the generalized Whittaker functions of type $\eta$, for all $\eta \in V'(\R)$.  In particular, we prove that if $q(\eta) < 0$, the only such function is the $0$ function, while if $\eta \neq 0$ and $q(\eta) \geq 0$ then all such functions are scalar multiples of the function $\Wh_{\eta}$ mentioned in the introduction.

In order to understand these generalized Whittaker functions, we make relatively explicit the differential equation $D_{\ell} F = 0$ in coordinates.  To do this, we begin by making an explicit Iwasawa decomposition of some elements of the Lie algebra of $G$.  In more detail, let $\n, \m$ denote the complexified Lie algebras of $N$, $M$; one has a decomposition $\g = \n + \m + \k$.  We have
\[\p = (\R u_{+} \oplus V_2) \wedge (\R u_{-} \oplus V_n) = \R u_+ \wedge u_{-} \oplus u_{+} \wedge V_n \oplus V_2 \wedge u_{-} \oplus V_2 \wedge V_n.\]
In $\n +\m + \k$ coordinates, a basis of $\p$ decomposes as follows:
\begin{itemize}
\item $u_{+} \wedge u_{-} = (e + f) \wedge (e-f) = -2 e \wedge f \in \m$.
\item $u_{+} \wedge u_j = (e+f) \wedge u_j = (2e - u_{-}) \wedge u_j = 2 e \wedge u_j - u_{-} \wedge u_j \in \n + \k$. (Recall that the $u_j$ are a basis of $V_n$.)
\item $v_i \wedge u_j \in \m$. (Recall that $v_1, v_2$ is a basis of $V_2$.)
\item $v_i \wedge u_{-} = v_i \wedge (e-f) = v_i \wedge (2 e- u_{+}) = -2 e \wedge v_i + u_{+} \wedge v_i \in \n + \k$.
\end{itemize}

For ease of notation, let $[x^j] = \frac{x^j}{j!}$ and similarly $[y^j] = \frac{y^j}{j!}$.  Let $F_{v}$ denote the components of the $\Vm_{\ell}$-valued function $F$; that is 
\[F = \sum_{-\ell \leq v \leq \ell}{F_v [x^{\ell +v}][y^{\ell-v}]}.\]
Let $\{u_1^\vee, \ldots, u_n^\vee\}$ be the basis dual to the basis $\{u_1, \ldots, u_n\}$ and $u_{-}^\vee$ dual to $u_{-}$.  Denote by $D^{M}_{iv_1 \pm v_2, u_j}$ the differential operator on functions on $M$ corresponding to the (differential right-regular) action of $(iv_1 \pm v_2) \wedge u_j$ on $F$.  For future reference, note that $(iv_1-v_2, iv_1+v_2) = -2$. 

Suppose $t \in \R^\times$, $m \in \SO(2,n)$ and $x \in V'(\R)$ so that $n(x)\diag(t,m,t^{-1}) \in N(\R)M(\R) = P(\R)$.  Restricting the function $F$ to $P$, we write $F(x,t,m) := F(n(x)\diag(t,m,t^{-1}))$.  For $w \in V'$, denote 
\[D^{V'}_{w}F(x,t,m) = \frac{d}{d\lambda} F(x + \lambda w, t,m)|_{\lambda = 0}\]
the partial derivative in the $w$-direction.  Also, note that $(e \wedge f) F = t\partial_t F$. 

Suppose $F:G(\R) \rightarrow \Vm_{\ell}$ is a function satisfying $F(gk) = k^{-1} F(g)$ for all $g \in G(\R)$ and $k \in K$.  The following proposition computes $D_{\ell}F(x,t,m)$ explicitly in coordinates, in terms of the differential operators $D^{M}$, $D^{V'}$ and $t\partial_{t}$.  To state the result, note that the operator $D_{\ell}$ is valued in $S^{2\ell-1}(Y_2) \otimes (Y_2 \boxtimes V_{n+1})$, which has a basis consisting of elements $[x^{\ell+v-1}][y^{\ell-v}] \otimes y \otimes u_{-}^\vee$, $[x^{\ell+v-1}][y^{\ell-v}] \otimes x \otimes u_{-}^\vee$, $[x^{\ell+v-1}][y^{\ell-v}] \otimes y \otimes u_{j}^\vee$, $[x^{\ell+v-1}][y^{\ell-v}] \otimes x \otimes u_{j}^\vee$.  

\begin{proposition}\label{thm:Dcoefs} Suppose $F:G(\R) \rightarrow \Vm_{\ell}$ is a function satisfying $F(gk) = k^{-1} F(g)$ for all $g \in G(\R)$ and $k \in K$. The coefficients of linear independent terms in $2 D_{\ell}F$ are as follows:
\begin{enumerate}
\item $[x^{\ell+v-1}][y^{\ell-v}] \otimes y \otimes u_{-}^\vee:$
\[2 D_{tm (iv_1-v_2)}^{V'}F_v - \sqrt{2}(\ell+v) F_{v-1} + \sqrt{2} t\partial_t F_{v-1}\]
\item $[x^{\ell+v-1}][y^{\ell-v}] \otimes x \otimes u_{-}^\vee:$
\[-\sqrt{2}t \partial_t F_v - 2D^{V'}_{tm (iv_1+v_2)}F_{v-1} + \sqrt{2}(\ell-v+1)F_v\]
\item $[x^{\ell+v-1}][y^{\ell-v}] \otimes y \otimes u_{j}^\vee:$
\[-D^M_{iv_1-v_2,u_j} F_v - \sqrt{2} D^{V'}_{tm u_j}F_{v-1}\]
\item $[x^{\ell+v-1}][y^{\ell-v}] \otimes x \otimes u_{j}^\vee:$
\[\sqrt{2} D^{V'}_{tm u_j} F_v + D^M_{iv_1 + v_2,u_j} F_{v-1}.\]
\end{enumerate}
\end{proposition}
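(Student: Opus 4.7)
The plan is to compute $\widetilde{D_{\ell}}F = \sum_\gamma X_\gamma F \otimes X_\gamma^\vee \in \Vm_{\ell}\otimes\p^\vee$ in an adapted basis and then apply the $K$-equivariant projection $\pr$ explicitly. For the basis of $\p$ I will use exactly the four types of elements decomposed just above the statement, but regrouping the $V_2$-directions into the complex vectors $iv_1\pm v_2$, since under the identification $V_3\otimes\C \simeq S^2(Y_2)$ these correspond to the weight vectors $x^2$ and $y^2$, while $u_+$ corresponds to $xy$ (with the factor $1/\sqrt{2}$). This choice makes both the bilinear form on $V_3$ and the $\sl_2$-action on $\Vm_\ell$ nearly diagonal, and it is what forces the $\sqrt{2}$'s seen in the proposition.

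For each basis element of $\p$, the Iwasawa decomposition recorded in the excerpt converts the right-regular derivative into three pieces in $\n + \m + \k$. The $\m$-parts $-2e\wedge f$ and $v_i\wedge u_j$ contribute the operators $t\partial_t$ and $D^M_{v_i,u_j}$ exactly as claimed. The $\n$-parts $e\wedge u_j$ and $e\wedge v_i$ become $V'$-directional derivatives after conjugating the one-parameter subgroup through $NM$: using the identity $\diag(t,m,t^{-1})n(w)\diag(t,m,t^{-1})^{-1}=n(tmw)$, I obtain $(e\wedge w)F(n(x)\diag(t,m,t^{-1})) = D^{V'}_{tmw}F(x,t,m)$, producing the operators $D^{V'}_{tm(iv_1\pm v_2)}$ and $D^{V'}_{tm u_j}$. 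The $\k$-parts need the equivariance $F(gk)=k^{-1}F(g)$, which turns the right-regular action of $\k$ into the negative of the $\Vm_\ell$-action. The $\k$-piece $u_-\wedge u_j$ appearing in $u_+\wedge u_j$ lies in $\wedge^2 V_{n+1}$, which acts trivially on $\Vm_\ell$ and therefore disappears; this is why items (3) and (4) contain no $\sl_2$-shift. The $\k$-piece $u_+\wedge v_i$ appearing in $v_i\wedge u_-$ lies in $\wedge^2 V_3$, and by the identities $(iv_1-v_2)\wedge u_+ = \sqrt{2}E$, $(iv_1+v_2)\wedge u_+ = \sqrt{2}F$ from the notation section it acts as the $\sl_2$-raising/lowering on the weight vectors $[x^{\ell+v}][y^{\ell-v}]$, producing the shifts $F_v\to F_{v\pm 1}$ with coefficients $(\ell\pm v+1)$ or $(\ell\mp v)$ according to the standard $\sl_2$ formulas on $S^{2\ell}(Y_2)$.

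Having assembled $\widetilde{D_\ell}F$, I apply $\pr$ via the factorization spelled out in subsection 3.1: embed $\p^\vee \simeq S^2(Y_2)\boxtimes V_{n+1} \hookrightarrow (Y_2\otimes Y_2)\boxtimes V_{n+1}$ by $x^2\mapsto x\otimes x$, $y^2\mapsto y\otimes y$, $xy\mapsto\tfrac{1}{2}(x\otimes y + y\otimes x)$, then contract the first $Y_2$ factor against $\Vm_\ell = S^{2\ell}(Y_2)$, keeping only the $S^{2\ell-1}$-component and discarding $S^{2\ell+1}$. Pairing the chosen $\p$-basis with the dual basis of $\p^\vee$ (computed from the matrix of the bilinear form on $V_3$ in the complex basis $\{iv_1-v_2, u_+, iv_1+v_2\}$) and collecting coefficients of each of the four linearly independent basis vectors $[x^{\ell+v-1}][y^{\ell-v}]\otimes z\otimes u^\vee$ ($z\in\{x,y\}$, $u^\vee\in\{u_-^\vee, u_j^\vee\}$) yields the four displayed identities. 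The factor of $2$ on the left-hand side of the proposition absorbs the $\tfrac{1}{2}$ arising from the symmetrization in the embedding $S^2(Y_2)\hookrightarrow Y_2\otimes Y_2$.

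The main obstacle will be bookkeeping rather than conceptual: the four coefficients mix values of $F_v$ at two adjacent indices, and getting each numerical prefactor right requires simultaneously tracking the $\sqrt{2}$ from $xy\leftrightarrow u_+/\sqrt{2}$, the off-diagonal entries of the form on $V_3$ that determine the dual basis, the standard $\sl_2$-weights $(\ell\pm v)$ from Clebsch–Gordan, and the sign flip coming from $K$-equivariance. Once a single coefficient is checked (say the one in (1), on a weight-$v$ test vector), the remaining three follow by the symmetries $E\leftrightarrow F$ on $\sl_2$ and $u_-\leftrightarrow u_j$ between the $\n+\k$ and $\m$ pieces of $\p$.
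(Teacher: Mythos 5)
Your proposal is correct and follows essentially the same route as the paper's own proof in the appendix: write $\widetilde{D_\ell}F$ in the basis of $\p$ adapted to the $\n+\m+\k$ decomposition (with the $V_2$-directions complexified to $iv_1\pm v_2$ and the dual basis computed in $Y_2\otimes Y_2\boxtimes V_{n+1}$ coordinates), convert the $\n$-parts to $D^{V'}_{tmw}$ by conjugation through $M$, use $K$-equivariance and the $\sl_2$-triple identities $(iv_1\mp v_2)\wedge u_+=\sqrt{2}E,\sqrt{2}F$ for the $\k$-parts, and then apply the explicit contraction $\pr$. All the intermediate identities you cite (the trivial action of $\wedge^2 V_{n+1}$, the divided-power $\sl_2$ weights, the dual-basis factors forcing the $\sqrt{2}$'s) match the paper's computation.
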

\begin{proof} As the computation is straightforward, we relegate the details to the appendix.  See subsection \ref{subsec:appFE}.\end{proof}

As a corollary of the above proposition, we obtain the complete description of the generalized Whittaker functions of type $\eta$.  Thus suppose $\eta \in V'(\R)$ and $F$ is a generalized Whittaker function of type $\eta$.  That is, assume $F$ is of moderate growth and $F(x,t,m)$ satisfies $F(x+w,t,m) = e^{i(\eta,w)}F(x,t,m)$ for all $w \in V'$, so that $D^{V'}_{w}F = i(\eta,w)F$.

To state the theorem, we first define the function $\Wh_{\eta}$ that plays a crucial role in this paper. 
\begin{definition}\label{def:Weta} Suppose $\eta \in V'(\R)$, $\eta \neq 0$, and $(\eta,\eta) \geq 0$.  For $t \in \GL_1(\R)$, $m \in \SO(V')(\R)$ set
\[ u_\eta(t,m) = \sqrt{2} ti (\eta,m(iv_1-v_2)).\]
Define
\[\Wh_{\eta}(t,m) = t^{\ell} |t| \sum_{-\ell \leq v \leq \ell}{\left(\frac{|u_\eta(t,m)|}{u_\eta(t,m)}\right)^{v} K_v(|u_\eta(t,m)|)}.\]
\end{definition}

Here recall the $K$-Bessel function $K_v(y)$ is defined as
\[K_v(y) = \frac{1}{2} \int_{0}^{\infty}{e^{-y(t+t^{-1})/2} t^{v}\,\frac{dt}{t}}.\]
It satisfies the differential equation $(y\partial_y)^2 K_v(y) = (v^2+y^2)K_v(y)$, diverges at $y \rightarrow 0$ and is of rapid decay as $y \rightarrow \infty$. As $K_v(y)$ diverges for $y \rightarrow 0$, Definition \ref{def:Weta} only makes sense because of the following lemma.

\begin{lemma}\label{lem:etaPos} Suppose $\eta \in V'(\R)$ is such that $(\eta, m(iv_1-v_2)) \neq 0$ for all $m \in \SO(V')(\R)$.  Then $(\eta,\eta) \geq 0$.  Conversely, if $\eta \neq 0$ and $(\eta, \eta) \geq 0$ then $(\eta, m(iv_1-v_2)) \neq 0$ for every $m \in \SO(V')(\R)$.\end{lemma}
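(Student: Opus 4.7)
The plan is to rephrase the condition ``$(\eta,m(iv_1-v_2))\neq 0$'' in terms of orthogonality to positive definite $2$-planes of $V'(\R)$, and then to read off both directions of the lemma from a short signature computation.

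First I would observe that $(iv_1-v_2,iv_1-v_2)=i^2(v_1,v_1)+(v_2,v_2)=0$, so $iv_1-v_2$ is isotropic in $V'(\C)$, with real part $-v_2$ and imaginary part $v_1$. Consequently, for $m\in\SO(V')(\R)$ and $\eta\in V'(\R)$,
\[ (\eta,m(iv_1-v_2)) = i\,(\eta,mv_1) - (\eta,mv_2), \]
with both summands on the right real. Thus this complex number vanishes iff $(\eta,mv_1)=(\eta,mv_2)=0$, iff $\eta\perp mV_2$. Next I would note that $\SO(V')(\R)$ acts transitively on the set of positive definite $2$-planes of $V'(\R)$: Witt's extension theorem extends any linear isometry $V_2\to W$ to an element of $O(V')(\R)$, and composing with a reflection in a vector of $V_n$ (which acts trivially on $V_2$ since $V_2\perp V_n$) corrects the determinant if needed. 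Therefore the hypothesis ``$(\eta,m(iv_1-v_2))\neq 0$ for every $m\in\SO(V')(\R)$'' is equivalent to ``$\eta$ is orthogonal to no positive definite $2$-plane in $V'(\R)$''.

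With this reduction in hand, each direction becomes a one-line signature argument. For the converse: if $\eta\neq 0$ with $(\eta,\eta)\ge 0$ and $\eta\perp W$ for some positive definite $2$-plane $W$, then $\eta\in W^\perp$, which has signature $(0,n)$ and is thus negative definite, forcing $(\eta,\eta)<0$ --- contradiction. For the forward direction (by contrapositive): if $(\eta,\eta)<0$, then $\R\eta$ is a negative definite line, so $\eta^\perp$ has signature $(2,n-1)$; since $n\ge 3$ this perp contains a positive definite $2$-plane $W$, and any $m\in\SO(V')(\R)$ with $mV_2=W$ then satisfies $(\eta,m(iv_1-v_2))=0$. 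The only even mildly delicate point in the proof is the $\SO$ versus $O$ issue in the transitivity step, which is handled by the reflection correction above.
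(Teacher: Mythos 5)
Your proof is correct and follows essentially the same route as the paper's: both reduce the condition $(\eta,m(iv_1-v_2))\neq 0$ to the statement that $\eta$ is not orthogonal to any positive definite $2$-plane of $V'(\R)$, and then conclude by the signature count in each direction. Your write-up is slightly more explicit on two points the paper leaves implicit --- separating real and imaginary parts to justify the reduction, and the Witt-plus-reflection argument for transitivity of $\SO(V')(\R)$ on positive definite $2$-planes --- but the substance is identical.
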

\begin{proof} The hypothesis $(\eta, m(iv_1-v_2)) \neq 0$ for every $m \in \SO(V')(\R)$ is equivalent to the statement that the projection of $\eta$ to every positive definite $2$-subspace of $V'$ is nonzero.  Suppose first that $(\eta,\eta) \geq 0$.  Set $\eta' = m^{-1} \eta$.  Then $(\eta',\eta') \geq 0$.  Thus the projection of $\eta'$ to $V_2 = \mathrm{Span}\{v_1,v_2\}$ is not $0$, because otherwise $\eta'$ would lie in $V_n = (V_2)^\perp$ which would imply $(\eta',\eta') < 0$.

Conversely, suppose that $(\eta, \eta) < 0$.  Then $(\R \eta)^\perp$ contains a positive definite $2$-plane $V_2' = m V_2$ for some $m \in \SO(V')(\R)$.  Then $(\eta, m(iv_1-v_2)) = 0$, as desired.\end{proof}

With the above notation, we have the following result.
\begin{theorem}\label{thm:KBessel} Suppose that $F$ is a generalized Whittaker function of type $\eta$ as above. Assume $\eta \neq 0$. If $(\eta, \eta) < 0$, then $F$ is identically $0$.  Conversely, if $(\eta, \eta) \geq 0$, then $F(t,m) = C \Wh_{\eta}(t,m)$ for a constant $C \in \C$.\end{theorem}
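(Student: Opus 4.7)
The plan is to convert the condition $D_\ell F = 0$ into an explicit system of linear ODEs for the components $F_v(t,m)$ of $F$, solve the system using modified Bessel functions, and invoke Lemma~\ref{lem:etaPos} to handle the case $q(\eta)<0$. Since $F(n(x)g)=e^{i(\eta,x)}F(g)$, each operator $D^{V'}_w$ acts by the scalar $i(\eta,w)$. Substituting this into Proposition~\ref{thm:Dcoefs}, and writing $u=u_\eta(t,m)=\sqrt{2}ti(\eta,m(iv_1-v_2))$ and $\bar u=\sqrt{2}ti(\eta,m(iv_1+v_2))$ (which is the complex conjugate of $u$, so that $u\bar u=|u|^2$), the four vanishing equations of the proposition become
\begin{align*}
uF_v &= (\ell+v)F_{v-1}-t\partial_tF_{v-1},\\
\bar uF_{v-1} &= (\ell-v+1)F_v-t\partial_tF_v,\\
D^M_{iv_1-v_2,u_j}F_v &= -\sqrt{2}it(\eta,mu_j)F_{v-1},\\
D^M_{iv_1+v_2,u_j}F_{v-1} &= -\sqrt{2}it(\eta,mu_j)F_v.
\end{align*}

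Eliminating $F_{v-1}$ between the first two equations gives the second-order ODE $(t\partial_t)^2F_v-(2\ell+2)(t\partial_t)F_v+[(\ell+1)^2-v^2-|u|^2]F_v=0$. With the substitution $F_v=t^\ell|t|(|u|/u)^vG_v(|u|,m)$ and the identities $t\partial_t|u|=|u|$ and $t\partial_t(|u|/u)=0$ (the latter because $|u|/u$ is locally constant in $t$ at fixed $m$), this reduces to the modified Bessel equation $(y\partial_y)^2G_v=(v^2+y^2)G_v$ in $y=|u|$. Its two solutions are $I_v(y)$ and $K_v(y)$; moderate growth of $F$ as $t\to\infty$ (hence $y\to\infty$) rules out the exponentially growing $I_v$, so $G_v=c_v(m)K_v(y)$. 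Re-substituting into the first-order relations (I), (II) and using the standard $K$-Bessel recurrence $K'_v(y)=(v/y)K_v(y)-K_{v+1}(y)$ pins all $c_v(m)$ to a single scalar $C(m)$, giving $F(t,m)=C(m)\Wh_\eta(t,m)$ on the open locus $\{u\neq 0\}$.

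It remains to show $C(m)$ is a constant (finishing the $q(\eta)\geq 0$ case, since Lemma~\ref{lem:etaPos} then gives $u_\eta\neq 0$ everywhere) and to rule out the $q(\eta)<0$ case. For the former, one substitutes $F=C(m)\Wh_\eta$ into equations (III) and (IV); a direct computation using how $u,\bar u$ and the phases $(|u|/u)^v$ transform under the root vectors $(iv_1\pm v_2)\wedge u_j\in\m$, together with the Bessel recurrences, should show these equations reduce to $D^M_{iv_1\pm v_2,u_j}C(m)=0$ for all $j$. Combined with the $K$-equivariance $F(t,mk)=k^{-1}F(t,m)$ for $k\in K\cap M$ and the fact that $(V_2\otimes V_n)\oplus(\k\cap\m)$ generates all of $\so(V')$, this forces $C(m)$ to be constant. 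I expect this compatibility bookkeeping to be the main technical step. Finally, if $q(\eta)<0$, Lemma~\ref{lem:etaPos} produces $m_0\in\SO(V')(\R)$ with $u_\eta(t,m_0)\equiv 0$; since $K_v(y)$ has a non-integrable singularity at $y=0$, smoothness of the automorphic form $F$ near $(t,m_0)$ forces $C\equiv 0$ on a neighborhood, and real analyticity of modular forms then propagates $F\equiv 0$ globally.
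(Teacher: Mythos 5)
Your proposal is correct and follows essentially the same route as the paper's proof: reduce Proposition \ref{thm:Dcoefs} to the four difference-differential equations, eliminate to get the modified Bessel equation in $|u_\eta|$, use moderate growth to discard $I_v$, use the $K$-Bessel recurrences to fix the phases $(|u|/u)^v$, and use the $D^M_{iv_1\pm v_2,u_j}$ equations together with $(K\cap M)$-equivariance to force the remaining coefficient to be constant. For the case $q(\eta)<0$ the paper simply cites the argument of \cite[Proposition 8.2.4]{pollackQDS}, which is the same blow-up-of-$K_v$-at-the-origin argument you sketch (note that one does not need real analyticity there: the locus $u_\eta\neq 0$ is connected and dense, so continuity already propagates the vanishing).
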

\begin{proof} We explain here that the function $\Wh_{\eta}$ has the correct $(K \cap M)$-equivariance property.  See section \ref{subsec:appFE} for the rest of the proof.

We have $K \cap M = \mu_2 \times S(O(2) \times O(n))$.  Consider the element $\epsilon=\diag(-1,1,-1)$ in $M \cap K$.  Then $\epsilon u_+ = -u_+$ while $\epsilon$ acts as the identity on $V_2 = \R v_1 \oplus \R v_2$.  Thus $\epsilon$ acts on $V_3 \simeq S^2(Y_2)$ as $\epsilon x^2 = x^2$, $\epsilon xy = - xy$ and $\epsilon y^2 = y^2$.  It follows that on $\Vm_\ell = S^{2\ell}(Y_2)$ one has $\epsilon x^{\ell+v}y^{\ell-v} = (-1)^{\ell+v} x^{\ell+v}y^{\ell-v}$.  Thus $F_v(x,-t,m) = (-1)^{\ell+v}F_v(x,t,m)$, from which the formula $t^{\ell}|t|$ follows.

Let us consider the equivariance for the $\SO(2)$ part.  Normalize the isomorphism $z: \SO(V_2) \simeq S^1$ by $k(v_1+iv_2) = z(k) (v_1 + iv_2)$.  Then $k(v_1 -iv_2) = z(k)^{-1}(v_1 -iv_2)$ and we have $k (x^{n+v}y^{n-v}) = z(k)^v x^{n+v}y^{n-v}$.  As $F_v(t,m k) = z(k)^{-v} F_v(t,m)$, the $(K \cap M)$-equivariance follows for $k \in \SO(2) \times SO(n)$.  For the nontrivial element of $\pi_0(S(O(2) \times O(n)))$, set $\epsilon'$ to be any element of $S(O(2) \times O(n))$ with $\epsilon' v_1 = v_1$ and $\epsilon' v_2 = -v_2$.  Then, on the one hand, $\epsilon'(x^2) = y^2$,  $\epsilon'(y^2) = x^2$ and $\epsilon'(xy) = xy$, from which it follows that $\epsilon'(x^{n+v}y^{n-v}) = x^{n-v}y^{n+v}$.  On the other hand, 
\[F_v(t,m\epsilon') = t^{\ell}|t| \left(\frac{|u_\eta(t,m)|}{u_\eta(t,m)^*}\right)^{v} K_v(|u_\eta(t,m)|) = F_{-v}(t,m),\]
from which the $(K \cap M)$-equivariance follows for this element. \end{proof}

We now spell out what the generalized Whittaker functions of type $\eta$ look like when $\eta = 0$.  For $k \in \SO(2) \times \SO(n)$, recall that $z(k) \in S^1 \subseteq \C^\times$ is defined by the equality $k (v_1 + iv_2) = z(k) (v_1+iv_2)$.  Additionally, denote by $\epsilon'$ an element of $S(O(2) \times O(n)) \subseteq K \cap M$ with $\epsilon' v_1 = v_1$ and $\epsilon' v_2 = -v_2$.
\begin{corollary}\label{cor:eta0} Suppose $F$ is a generalized Whittaker function of type $\eta = 0$.  Then $F_v(t,m) = 0$ if $v \notin \{-\ell, 0, \ell\}$.  On $M(\R)$, one has $F_0(t,m) = \beta t^{\ell}|t|$, $F_{\pm \ell}(t,m) = |t| F_{\pm \ell}'(m)$ for some constant $\beta \in \C$ and functions $F_{\pm \ell}'(m)$ that are independent of $t$.  The functions $F'_{\pm \ell}(m)$ satisfy $D_{iv_1-v_2,u}^M F_{\ell}'(m) = 0$ and $D^M_{iv_1+v_2,u}F_{-\ell}'(m) = 0$ for all $u \in V_n$.  Moreover, $F_{\pm \ell}'(mk) = z(k)^{\mp \ell}F'_{\pm \ell}(m)$ for all $k \in \SO(2) \times \SO(n)$ and $F_{-\ell}(t,m) = F_{\ell}(t,m\epsilon')$. 

Conversely, suppose $F_{\ell}'(m)$ satisfies $F_{\ell}'(mk) = z(k)^{-\ell}F_{\ell}'(m)$ for all $k \in \SO(2) \times \SO(n)$ and $D^M_{iv_1+v_2,u} F'_{\ell}(m) = 0$ for all $u \in V_n$.  Define $F_{\ell}(t,m)= |t|F'_{\ell}(m)$, $F_{-\ell}(t,m) = F(t,m\epsilon')$, and $F_0(t,m) = \beta t^{\ell} |t|$ for any constant $\beta \in \C$.  Then $F(t,m) = \sum_{-\ell \leq v \leq \ell}{F_v(t,m) [x^{\ell+v}][y^{\ell-v}]}$ is $(K \cap M)$-equivariant and satisfies the differential equations of Proposition \ref{thm:Dcoefs}.\end{corollary}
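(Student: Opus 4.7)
The plan is to specialize Proposition \ref{thm:Dcoefs} to $\eta = 0$, which forces $D^{V'}_w F_v = 0$ for all $w, v$. The four coefficient equations then decouple into pure ODEs in $t$ (items (1) and (2) of the proposition) and first-order equations on $M$ (items (3) and (4)). Items (1)--(2) become $t\partial_t F_{v-1} = (\ell+v) F_{v-1}$ and $t\partial_t F_v = (\ell-v+1) F_v$, each valid for $-\ell+1 \leq v \leq \ell$. Applying both to the same index $w$ with $-\ell+1 \leq w \leq \ell-1$ forces $2w F_w = 0$, so $F_w \equiv 0$ unless $w \in \{-\ell, 0, \ell\}$. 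At the surviving indices the ODEs read $t\partial_t F_0 = (\ell+1) F_0$ and $t\partial_t F_{\pm \ell} = F_{\pm \ell}$, with piecewise-power solutions on the half-lines $t > 0$ and $t < 0$; the $\mu_2$-equivariance $F_v(-t,m) = (-1)^{\ell+v} F_v(t,m)$ coming from $\epsilon = \diag(-1,1,-1) \in K \cap M$ (as computed in the proof of Theorem \ref{thm:KBessel}) then selects $F_0 = \beta(m)\, t^\ell |t|$ and $F_{\pm \ell} = F'_{\pm \ell}(m)\, |t|$.

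Next I would extract the $M$-differential conditions from items (3)--(4): taking $v = \ell$ in (3) gives $D^M_{iv_1 - v_2, u_j} F'_\ell = 0$; taking $v - 1 = -\ell$ in (4) gives $D^M_{iv_1 + v_2, u_j} F'_{-\ell} = 0$; and items (3) at $v = 0$ together with (4) at $v = 1$ yield $D^M_{iv_1 \pm v_2, u_j}\, \beta = 0$. Because $(iv_1 \pm v_2) \wedge u_j$ spans $(\m \cap \p) \otimes \C$, the function $\beta$ is annihilated by all of $\m \cap \p$; combined with $(K \cap M)$-right-invariance of the $[x^\ell][y^\ell]$-coefficient (the $\SO(V_2) \times \SO(V_n)$-weight is $z(k)^0 = 1$ and $\epsilon'$ fixes $xy$), the polar decomposition $M(\R) = (K \cap M)^0 \exp(\m \cap \p)$ together with the fact that every connected component of $M(\R)$ meets $K \cap M$ forces $\beta$ to be a single constant. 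The equivariance $F'_{\pm\ell}(mk) = z(k)^{\mp \ell} F'_{\pm \ell}(m)$ and the identity $F_{-\ell}(t,m) = F_\ell(t, m\epsilon')$ follow directly from the weight computations $k \cdot [x^{\ell+v}][y^{\ell-v}] = z(k)^v [x^{\ell+v}][y^{\ell-v}]$ and $\epsilon' \cdot [x^{\ell+v}][y^{\ell-v}] = [x^{\ell-v}][y^{\ell+v}]$ already carried out in the proof of Theorem \ref{thm:KBessel}.

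For the converse I would assemble $F$ from the given $F'_\ell$ using the formulas in the statement, setting $F'_{-\ell}(m) := F'_\ell(m\epsilon')$ and all other $F_v \equiv 0$. The $t$-ODEs are immediate from $t\partial_t(t^\ell |t|) = (\ell+1) t^\ell |t|$ and $t\partial_t |t| = |t|$, and items (3)--(4) for $F_0$ hold automatically since $\beta$ is constant. Item (3) at $v = \ell$ is the input hypothesis on $F'_\ell$, and item (4) at $v = -\ell+1$ is reduced to this same hypothesis by right-translating by $\epsilon'$: one computes $\mathrm{Ad}(\epsilon'^{-1})((iv_1+v_2)\wedge u_j) = (iv_1-v_2)\wedge \epsilon'(u_j)$, so $D^M_{iv_1+v_2, u_j} F'_{-\ell}(m) = D^M_{iv_1-v_2, \epsilon'(u_j)} F'_\ell(m\epsilon')$, which vanishes by assumption. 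The $(K \cap M)$-equivariance is then checked component by component using the weight calculations above and the fact that conjugation by $\epsilon'$ inverts $z$ on $\SO(V_2) \times \SO(V_n)$. The main point of care throughout is the rigidity of the scalar $\beta$ --- combining the infinitesimal vanishing in all $\m \cap \p$ directions with the $(K \cap M)$-equivariance across the four connected components of $M(\R)$ --- and beyond that the argument is direct bookkeeping on top of Proposition \ref{thm:Dcoefs}.
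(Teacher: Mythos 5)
Your argument is correct and matches the paper's proof: the paper likewise adds the two $t\partial_t$-equations from Proposition \ref{thm:Dcoefs} (with all $D^{V'}$ terms killed by $\eta=0$) to force $F_v=0$ for $v\notin\{-\ell,0,\ell\}$, solves the surviving ODEs on $M(\R)^0$, and invokes the $(K\cap M)$-equivariance computations from the proof of Theorem \ref{thm:KBessel} for the $|t|$ factors, the constancy of $\beta$, and the relation $F_{-\ell}(t,m)=F_{\ell}(t,m\epsilon')$ --- your write-up simply fills in the details the paper dismisses as following easily. The only point worth flagging is that in the converse you (reasonably) read the hypothesis on $F'_{\ell}$ as $D^M_{iv_1-v_2,u}F'_{\ell}=0$, which is what item (3) of Proposition \ref{thm:Dcoefs} at $v=\ell$ actually requires and what the forward direction produces; the sign in the corollary's stated converse hypothesis appears to be a typo.
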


\begin{proof} First suppose that $(t,m) \in M(\R)^0$, the connected component of the identity.  If $-\ell+1 \leq v \leq \ell-1$, then we have $(t \partial_t - (\ell+v+1))F_{v} = 0$ and $(-t\partial_t + (\ell-v+1))F_v = 0$. Adding the equations gives $-2v F_v = 0$, so $F_v = 0$ unless $v= -\ell, 0,$ or $\ell$.  Because $\eta = 0$, we obtain $D^{M}_{iv_1 \pm v_2,u_j} F_0 = 0$. As in the proof of Theorem \ref{thm:KBessel}, the $(K \cap M)$-equivariance now implies $F_0(t,m) = t^{\ell+1}$ on $M(\R)^0$.  The formulas for $F_{\pm \ell}(t,m)$ on $M(\R)^0$ follow easily.  Additionally, the absolute values $|t|$ and the relationship between $F_{\ell}(t,m)$ and $F_{-\ell}(t,m)$ follow from the $(K \cap M)$-equivariance as in the proof of Theorem \ref{thm:KBessel}.

The converse follows easily, using the formulas for the $(K \cap M)$-action on $\Vm_{\ell}$ from the proof of Theorem \ref{thm:KBessel}. \end{proof}

Below, we will require the following lemma. Denote by $f_{\ell}^1(g,s)$ the $\Vm_{\ell}$-valued, $K$-equivariant inducing section in $Ind_{P(\R)}^{G(\R)}(|\nu|^{s})$, whose restriction to $M(\R)$ is $f_{\ell}^1((t,m,t^{-1}),s) = |t|^s [x^{\ell}][y^{\ell}]$.
\begin{lemma}\label{lem:Dfl} Denote by $f_{\ell}^2(g,s)$ the $\Vm_{\ell}$-valued, $K$-equivariant inducing section in $Ind_{P(\R)}^{G(\R)}(|\nu|^{s})$, whose restriction to $M(\R)$ is $|t|^{s} \left([x^{\ell}][y^{\ell-1}]\otimes y - [x^{\ell-1}][y^{\ell}] \otimes x\right) \otimes u_{-}^{\vee}$. Then 
\[\sqrt{2} D_{\ell} f^{1}_{\ell}(g,s) = (s-\ell-1) f_{\ell}^2(g,s).\]
\end{lemma}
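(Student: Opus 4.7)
The plan is to apply Proposition \ref{thm:Dcoefs} directly to $F = f_\ell^1(\cdot,s)$ and read off the answer. Both $f_\ell^1$ and $f_\ell^2$ are $K$-equivariant sections of $\mathrm{Ind}_{P(\R)}^{G(\R)}(|\nu|^s)$, and $D_\ell$ is a $K$-equivariant differential operator, so by the Iwasawa decomposition $G(\R) = P(\R)K$ it suffices to verify the identity on $M(\R)$, where both sides are specified by their explicit restrictions.

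In the coordinates of Proposition \ref{thm:Dcoefs}, the function $f_\ell^1$ has components $F_v(x,t,m) = \delta_{v,0}|t|^s$: it has pure weight zero in $\Vm_\ell$, is independent of $x$ because $f_\ell^1$ is left $N$-invariant, and is independent of $m$ by the specified restriction formula. Consequently $D^{V'}_w F_v \equiv 0$ for every $w \in V'$, and $D^M_{iv_1 \pm v_2,\, u_j} F_v \equiv 0$ for all indices, so items (3) and (4) of Proposition \ref{thm:Dcoefs} vanish identically. This already matches the fact that $f_\ell^2$ is supported only on the $u_-^\vee$ direction.

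For the $u_-^\vee$ components, only $v = 1$ in item (1) and $v = 0$ in item (2) can contribute, because these are the only choices for which the relevant $F_v$ or $F_{v-1}$ is nonzero. Using $t\partial_t|t|^s = s|t|^s$, item (1) evaluates to $-\sqrt{2}(\ell+1)|t|^s + \sqrt{2}\,s\,|t|^s = \sqrt{2}(s-\ell-1)|t|^s$ along $[x^\ell][y^{\ell-1}] \otimes y \otimes u_-^\vee$, while item (2) evaluates to $-\sqrt{2}\,s\,|t|^s + \sqrt{2}(\ell+1)|t|^s = -\sqrt{2}(s-\ell-1)|t|^s$ along $[x^{\ell-1}][y^\ell] \otimes x \otimes u_-^\vee$. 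Assembling these contributions, dividing by the factor of $2$ that appears in front of $D_\ell$ in Proposition \ref{thm:Dcoefs}, and multiplying by $\sqrt{2}$, one obtains exactly $(s-\ell-1)f_\ell^2$ on $M(\R)$. The identity then extends to $G(\R)$ by $K$-equivariance.

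There is no genuine obstacle here beyond careful bookkeeping in Proposition \ref{thm:Dcoefs}: the equivariance reduction eliminates global issues, the vanishing of the $D^M$ and $D^{V'}$ terms on this very simple section kills most of the would-be contributions, and the minor point deserving attention is that the two surviving terms carry opposite signs in precisely the way needed to assemble into the single section $f_\ell^2$.
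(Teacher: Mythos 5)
Your proof is correct and follows essentially the same route as the paper: apply Proposition \ref{thm:Dcoefs} to $f_\ell^1$, observe that the $D^M$ and $D^{V'}$ terms vanish because the section is independent of $m$ and left $N$-invariant, compute $(t\partial_t-(\ell+1))|t|^s=(s-\ell-1)|t|^s$ on the two surviving $u_-^\vee$ components, and extend by $(K\cap M)$-equivariance. The bookkeeping of the factor of $2$ and the relative sign of the two terms is handled correctly.
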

\begin{proof} From Proposition \ref{thm:Dcoefs}, on $M(\R)^0$ one has
\begin{align*} \sqrt{2} D_{\ell} f^{1}_{\ell}((t,m,t^{-1}),s) &= (t\partial_t - (\ell+1))(t^s)\left([x^{\ell}][y^{\ell-1}]\otimes y - [x^{\ell-1}][y^{\ell}] \otimes x\right) \otimes u_{-}^{\vee} \\ &= (s-\ell-1) t^{s} \left([x^{\ell}][y^{\ell-1}]\otimes y - [x^{\ell-1}][y^{\ell}] \otimes x\right) \otimes u_{-}^{\vee}\end{align*}
using that $D^M_{iv_1 \pm v_2, u_j} f_{\ell}^{1}((t,m,t^{-1}),s) = 0$ because $f_{\ell}^{1}$ is independent of the variable $m \in \SO(V')(\R)$.  The lemma follows from the $(K\cap M)$-equivariance.
\end{proof}

\section{The Fourier expansion of Eisenstein series}\label{sec:Eis} There is a $\Vm_\ell$-valued degenerate Eisenstein series on $G$, $E_\ell(g,s)$ associated to the (non-normalized) induction $Ind_{P}^G(|\nu|^{s})$.  If $\ell$ is even, then at $s=\ell+1$ and for appropriate inducing data, this Eisenstein series is a modular form in sense of subsection \ref{subsec:MFdef}.  The purpose of this section is to prove that indeed we get a modular form as above, and to compute the Fourier expansion of this Eisenstein series $E_\ell(g,s=\ell+1)$ along the unipotent radical $N$, at least when $\dim(V')$ is a multiple of four and the Eisenstein series is absolutely convergent.

The Eisenstein series $E_{\ell}(g,s)$ is defined using the inducing section $f_{\ell;\infty}(g,s) := f^1_{\ell}(g,s)$ of Lemma \ref{lem:Dfl} at the archimedean place.  The computation of its Fourier expansion consists of various parts, which we break into subsections.  Let us describe these parts now, before getting into the computation. 

To define some terminology, note that the non-constant Fourier coefficients of a modular form $\varphi$ of weight $\ell$ are parametrized by $\eta \in V'(\R)$, which can be either isotropic or anisotropic.  We call the Fourier coefficients corresponding to the nonzero isotropic $\eta$ \emph{rank one} Fourier coefficients, while those corresponding to the anisotropic $\eta$ the \emph{rank two} Fourier coefficients. 
\begin{enumerate}
\item By applying Lemma \ref{lem:Dfl}, it is immediate to see that if the Eisenstein series is absolutely convergent at $s = \ell+1$ (which occurs if $\ell +1 > \dim(V') = n+2$), then $E_{\ell}(g,s=\ell+1)$ is a modular form of weight $\ell$ for $G$.
\item If the Eisenstein series is not absolutely convergent, then it is not clear--and not necessarily true--that $E_{\ell}(g,s)$ is a modular form at $s = \ell+1$.  To see when it is, we make various archimedean intertwiner computations in subsection \ref{subsec:inter}. Although this is not needed for the Fourier expansion of the absolutely convergent Eisenstein series, it is useful for other applications.
\item We then compute the constant term of the absolutely convergent Eisenstein series $E_{\ell}(g,s=\ell+1)$ in subsection \ref{subsec:CT}.  Similar to what occurs with the degenerate Heisenberg Eisenstein series considered in \cite{pollackE8}, this constant term is a sum of a holomorphic weight $\ell$ degenerate Eisenstein series on $\SO(V')$ and a constant function.
\item The rank one Fourier coefficients of the Eisenstein series $E_{\ell}(g,s=\ell+1)$ are computed exactly as are the rank one Fourier coefficents of the degenerate Heisenberg Eisenstein series of \cite{pollackE8}.  We state the results in subsection \ref{subsec:rank1FC}.
\item The computation of the rank two Fourier coefficients of $E_{\ell}(g,s=\ell+1)$ splits into two parts, a finite part and an archimedean part. The finite part can be extracted from the literature (e.g. \cite{shurman}).  For the convenience of the reader, we give this computation in subsection \ref{subsec:ftePart}.
\item The archimedean part of the computation of the rank two Fourier coefficients of the Eisenstein series $E_{\ell}(g,s=\ell+1)$ is the main theorem of the paper.  This computation is done in subsection \ref{subsec:arch}.  Denote by $w$ the Weyl group element of $G$ that exchanges the parabolic $P$ with its opposite.  Then one has a function on $V'(\R)$ given by
\[x \mapsto f_{\ell}(wn(x);s=\ell+1).\]
What is computed in subsection \ref{subsec:arch} is the Fourier transform of this function.
\end{enumerate}

We now define the Eisenstein series $E_{\ell}(g,\Phi_f,s)$ that is the object of what follows.  Specifically, suppose $\Phi_f$ is a Schwartz-Bruhat function on $V(\A_f)$.  For $g_f \in \SO(V)(\A_f)$, define
\[f_{fte}(g_f,\Phi_f,s) = \int_{\GL_1(\A_f)}{|t|^{s}\Phi_f(t g_f^{-1} e)\,dt}.\]
Now for $g = g_f g_\infty \in G(\A_f) \times G(\R)$, let $f_{\ell}(g,\Phi_f,s) = f_{fte}(g_f,\Phi_f,s) f_{\ell;\infty}(g_\infty,s)$ and set $E_{\ell}(g,\Phi_f,s) = \sum_{\gamma \in P(\Q)\backslash G(\Q)}{f(\gamma g,\Phi_f,s)}$ the Eisenstein series.  When the Schwartz-Bruhat function $\Phi_f$ is $\Q$-valued (or $\overline{\Q}$-valued), these are the Eisenstein series that are the subject of Theorem \ref{thm:IntroFE} and we will prove that the Fourier coefficients of $(2\pi)^{-\ell} E_{\ell}(g,\Phi_f,s=\ell+1)$ are $\overline{\Q}$-valued.  

\subsection{Archimedean intertwiners}\label{subsec:inter} In this subsection we compute some archimedean intertwining operators.  Specifically we compute the intertwining operator
\[M_\infty(w,s) f_{\ell,\infty}(g,s) = \int_{V'(\R)}{f_{\ell,\infty}(wn(x)g,s)\,dx}.\]
This is the content of Proposition \ref{prop:longInter} below.

We begin with the following well-known lemma, which computes a spherical Archimedean intertwiner on the groups $\SO(N,1)$.
\begin{lemma}\label{lem:SO(N,1)} Suppose $U$ is a positive definite quadratic space, and $V_1 = H \oplus U = \R e_1 \oplus U \oplus \R f_1$ is the orthogonal direct sum of $U$ and a hyperbolic plane $H = \R e_1 \oplus \R f_1$.  Denote by $\iota_1$ the involution on $V_1$ defined as $\iota_1(\alpha e_1 + v + \beta f_1) = \beta e_1 + v + \alpha f_1$, and $K_1$ the maximal compact subgroup of $G_1=\SO(V_1)$ that commutes with $\iota_1$.  Set $P_1=M_1N_1$ the parabolic subgroup of $\SO(V_1)$ that fixes the line $\R f_1$ via a right-action of $\SO(V_1)$ on $V_1$ and define $\nu: P_1 \rightarrow \GL_1$ as $f_1 p = \nu(p)^{-1} f_1$.  Let $f_1(g,s) \in Ind_{P_1}^{G_1}(|\nu|^s)$ be the $K_1$-spherical inducing section and $n_1: U \simeq N_1$ the identification of $U$ with the unipotent radical of $P_1$.  Then the intertwiner
\[\int_{U(\R)}{f_1(\iota_1 n_1(x) g,s)\,dx} = c(s) f_1(g,\dim U -s)\]
where $c(s)$ is a nonzero constant times $\frac{\Gamma(s-\dim(U)/2)}{\Gamma(s)}$.\end{lemma}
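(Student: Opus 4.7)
My plan is the standard rank-one intertwining computation, executed in coordinates adapted to the quadratic space $V_1$.

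First I would observe that $M(s)f_1(g,s) := \int_{U(\R)} f_1(\iota_1 n_1(x) g, s)\,dx$ sends $\mathrm{Ind}_{P_1}^{G_1}(|\nu|^s)$ into $\mathrm{Ind}_{P_1}^{G_1}(|\nu|^{\dim U - s})$ (the shift by $\dim U$ reflects that $N_1 \simeq U$ is an abelian unipotent, and this is the natural un-normalized shift for a rank-one intertwiner), that it is $K_1$-equivariant (by conjugating and using a change of variables), and that its value is annihilated by $N_1$ on the left with character $|\nu|^{\dim U - s}$. By uniqueness of the $K_1$-spherical vector on each side, $M(s)f_1(g,s) = c(s) f_1(g, \dim U - s)$ for some scalar $c(s)$.

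Next I would extract $c(s)$ by evaluating both sides at $g = 1$, since $f_1(1, \dim U - s) = 1$. This reduces matters to computing $|\nu(p(x))|^s$ where $\iota_1 n_1(x) = p(x)k(x)$ is the Iwasawa decomposition. Using that $f_1 \iota_1 = e_1$ and the explicit right-action formula $e_1 \cdot n_1(x) = e_1 + x - \tfrac{(x,x)}{2} f_1$, I get $f_1 \cdot \iota_1 n_1(x) = e_1 + x - \tfrac{(x,x)}{2} f_1$. Applying the positive-definite form $(v,w)_\iota := (v, \iota_1 w)$, which is $K_1$-invariant, and using $(f_1, \iota_1 f_1) = (f_1, e_1) = 1$, a short direct calculation yields
\[|\nu(p(x))|^{-2} = (e_1 + x - \tfrac{(x,x)}{2} f_1,\, f_1 + x - \tfrac{(x,x)}{2} e_1) = \left(1 + \tfrac{(x,x)}{2}\right)^2,\]
so $f_1(\iota_1 n_1(x), s) = (1 + q(x))^{-s}$.

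Finally I would reduce the integral
\[c(s) = \int_{U(\R)} (1 + q(x))^{-s}\,dx\]
to polar coordinates. Since $q$ is positive definite of dimension $d = \dim U$, choosing an orthonormal basis collapses this to $\omega_{d-1} \int_0^\infty (1 + r^2)^{-s} r^{d-1}\,dr$ times a fixed volume factor, which by the standard Beta function identity equals a nonzero constant times $\Gamma(d/2)\Gamma(s - d/2)/\Gamma(s)$, giving the claimed shape of $c(s)$. The only real obstacle is bookkeeping — sorting out the right- versus left-action conventions and the sign in $n_1(x)$ so that the Iwasawa calculation comes out cleanly; once that is done the rest is the classical Gindikin-Karpelevich formula for a single rank-one root.
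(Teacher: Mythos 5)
Your proof is correct and follows essentially the same route as the paper: both reduce the intertwiner to the integral $\int_{U(\R)}(1+q(x))^{-s}\,dx$ and evaluate it via the Beta function to get a nonzero constant times $\Gamma(s-\dim(U)/2)/\Gamma(s)$. The only cosmetic difference is that the paper realizes the spherical section by a Gaussian, $f_1(g,s)=\int_{\GL_1(\R)}|t|^s e^{-t^2||(0,0,1)g||^2}\,dt$ (a Godement-style section, which packages the $K_1$-equivariance and the Iwasawa factor at once), whereas you compute $|\nu(p(x))|^s$ directly from the Iwasawa decomposition using the $\iota_1$-norm; both yield the same integrand $(1+q(x))^{-s}$.
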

\begin{proof} Although, as mentioned, this lemma is surely well-known, we sketch a proof for the convenience of the reader.  Let $(\cdot,\cdot)_1$ denote the quadratic form on $V_1$. Define $||v||^2 = (v,\iota_1(v))_1$ for $v \in V_1$, and set $\Phi_\infty(v) = e^{-||v||^2}$ a Schwartz function on $V_1$.  Now, 
\[f_1(g,s) = \int_{\GL_1(\R)}{|t|^{s}\Phi_\infty(t(0,0,1)g)\,dt}\]
defines a $K_1$-spherical section in $Ind_{P_1(\R)}^{G_1(\R)}(|\nu|^s)$ with $f(1,s) = \Gamma(s/2)$.  Thus we can compute the $c$-function using this section $f_1(g,s)$.

We obtain
\begin{align*} \int_{U(\R)}{f_1(\iota_1 n_1(x) 1,s)\,dx} &= \int_{\GL_1(\R)}\int_{U(\R)}{|t|^{s} e^{-t^2||(1,x,-||x||^2/2)||^2}\,dt\,dx} \\ &= \Gamma(s/2) \int_{U(\R)}{\frac{dx}{(1+||x||^2/2)^s}}.\end{align*}
Thus
\begin{align*} c(s) &= \int_{U(\R)}{\frac{dx}{(1+||x||^2/2)^s}} \\ & \stackrel{\cdot}{=} \int_{0}^{\infty}{u^{\dim U/2}  (1+u)^{-s} \frac{du}{u}}\end{align*}
where the $\stackrel{\cdot}{=}$ means up to a nonzero constant.  This last integral is easily computed to be a nonzero constant times $\frac{\Gamma(s-\dim(U)/2)}{\Gamma(s)}$.\end{proof}

Applying Lemma \ref{lem:SO(N,1)}, we can now compute $M(w,s)f_{\ell,\infty}$.  For $z\in \C$ and $k\geq 0$ an integer, let $(z)_k = (z)(z+1) \cdots (z+k-1)$ denote the Pochhammer symbol.
\begin{proposition}\label{prop:longInter} Suppose 
\[V = \R e_1 \oplus \R e_2 \oplus \R e_3 \oplus U \oplus \R f_3 \oplus \R f_2 \oplus \R f_1 = \R e_1 \oplus V' \oplus \R f_1\]
with $U$ negative definite of dimension $m$ and $e_i, f_j$ isotropic with $(e_i,f_j) = \delta_{ij}$.  Denote by $P = MN$ the parabolic stabilizing $\R e_1$ for the left action of $\SO(V)$ on $V$ and $\nu:P \rightarrow \GL_1$ the character defined by $p e_1 = \nu(p) e_1$.  Suppose $w \in \SO(V)$ is defined by $w e_1 = f_1$, $w f_1 = e_1$ and $w$ is the identity of $V'$. Denote by $K$ the maximal compact subgroup of $G=\SO(V)$ that commutes with the involution $\iota$ that exchanges $e_i$ with $f_i$ and is the identity on $U$. Suppose $f_{\ell,\infty}(g,s)$ is the $K$-equivariant, $\Vm_{\ell}$-valued section in $Ind_{P(\R)}^{G(\R)}(|\nu|^{s})$ with $f_{\ell,\infty}((t,m,t^{-1}),s) = |t|^{s} x^{\ell}y^{\ell}$.  Then
\[\int_{N(\R)}{f_{\ell,\infty}(wng,s)\,dn} = c^{B_3}_{\ell}(s) f_{\ell,\infty}(g,4+m-s)\]
where 
\[c_{\ell}^{B_3}(s) = \frac{\left(\frac{s-\ell-1}{2}\right)_{\ell/2}}{\left(\frac{s-2}{2}\right)_{\ell/2+1}}  \cdot \frac{\Gamma\left(s-2 -\frac{m}{2}\right)}{\Gamma\left(s-2\right)} \cdot \frac{\left(\frac{s-2-m-\ell}{2}\right)_{\ell/2}}{\left(\frac{s-3-m}{2}\right)_{\ell/2+1}}\]
up to exponential factors and nonzero constants. Consequently, when $\ell > 2+m$, $c^{B_3}_{\ell}(s)$ is finite and $0$ at $s=\ell+1$.\end{proposition}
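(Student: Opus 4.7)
By $G(\R)$-equivariance of the intertwining operator $M_\infty(w,s)$, both sides of the claimed identity are $\Vm_\ell$-valued $K$-equivariant sections in $\Ind_{P(\R)}^{G(\R)}(|\nu|^{4+m-s})$, so the identity reduces to the scalar equality obtained by evaluating at $g=1$ and extracting the coefficient of $[x^\ell][y^\ell] \in \Vm_\ell$. The task thus becomes computing the scalar $c_\ell^{B_3}(s)$ defined as the $[x^\ell][y^\ell]$-coefficient of $\int_{V'(\R)} f_{\ell,\infty}(wn(x),s)\,dx$.

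The plan is induction in stages: decompose the orthogonal sum $V' = H_2 \oplus H_3 \oplus U$ into the two hyperbolic planes spanned by $\{e_2,f_2\}$ and $\{e_3,f_3\}$ together with the negative-definite part $U$, so that the $V'(\R)$-integration splits into three successive rank-one integrations, matching the three factors of $c_\ell^{B_3}(s)$. Each sub-integration should be interpreted as an intertwining integral on a rank-one subgroup of $\SO(V)$ naturally embedded via the relevant summand: an $\SO(m+1,1)$-type intertwiner for the $U(\R)$-integration, and $\SO(3,1)$-type intertwiners for the two hyperbolic-plane integrations. The parameter of each rank-one intertwiner is shifted from $s$ by an amount reflecting how the outer integration variables contribute to the modular quasicharacter; in particular, for the $U(\R)$-integration the shift is $-2$ (coming from the two extra hyperbolic dimensions), so Lemma \ref{lem:SO(N,1)} applies directly and produces the middle factor $\Gamma(s-2-m/2)/\Gamma(s-2)$.

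The main obstacle is the two $\SO(3,1)$-type integrations, which see the non-trivial $K$-type $\Vm_\ell = S^{2\ell}(Y_2)$ and therefore require an extension of Lemma \ref{lem:SO(N,1)} beyond the spherical setting. My plan is to write the Iwasawa decomposition of the relevant Weyl-rotated unipotent element explicitly, track the action of its Iwasawa $K$-component on $[x^\ell][y^\ell]$ as a unitary rotation in the $(x,y)$-plane parametrized by the integration variable, project back onto the $[x^\ell][y^\ell]$-coefficient (killing odd moments by orthogonality), and reduce the resulting integrand to beta-type integrals of the form $\int_0^\infty u^a(1+u)^{-b}\,du/u$. These evaluate in closed form to the Pochhammer ratios appearing in the two outer factors of $c_\ell^{B_3}(s)$. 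Multiplying the three rank-one contributions yields the claimed formula up to exponential factors and nonzero constants.

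For the final assertion, under the hypothesis $\ell > 2+m$ the Gamma-argument $s-2-m/2$ at $s=\ell+1$ equals $\ell-1-m/2 > 1 + m/2 > 0$, so the middle factor is finite and nonzero; similarly, all entries of the denominator Pochhammer symbols $((s-2)/2)_{\ell/2+1}$ and $((s-3-m)/2)_{\ell/2+1}$ are strictly positive at $s=\ell+1$. Hence $c_\ell^{B_3}(s)$ is finite at $s=\ell+1$. The vanishing is immediate from the first Pochhammer factor $((s-\ell-1)/2)_{\ell/2}$, whose leading entry $(s-\ell-1)/2$ equals zero at $s=\ell+1$.
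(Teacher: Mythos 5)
Your reduction to the value at $g=1$ and your final paragraph on finiteness and vanishing at $s=\ell+1$ are both fine. The gap is in the central step: the proposed splitting of the $V'(\R)$-integration into three successive intertwining integrals indexed by the summands $H_2$, $H_3$ and $U$ does not exist. The element $w$ is the reflection in the short root $r_1$ of the relative root system $B_3$, with inversion set $\{r_1-r_2,\,r_1-r_3,\,r_1,\,r_1+r_3,\,r_1+r_2\}$; the three summands correspond to the groupings $\{r_1\pm r_2\}$, $\{r_1\pm r_3\}$, $\{r_1\}$. For your stages to be compositions of intertwining operators between induced representations --- which is what you need in order to speak of each stage as ``an intertwining integral on a rank-one subgroup'' at ``a shifted parameter'' --- you would need a length-additive factorization of $w$ with these as inversion sets. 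None of these sets is the inversion set of any Weyl element: $r_1+r_2$ is the sum of the positive roots $r_1$ and $r_2$, both lying outside $\{r_1\pm r_2\}$, so the complement of that set is not closed under addition; likewise $r_1=(r_1-r_2)+r_2$ rules out $\{r_1\}$. (Also, the subgroup attached to $\R e_1\oplus H_2\oplus\R f_1$ is $\SO(2,2)$, which has rank two, not one.) Consequently the $-2$ shift you assert for the $U(\R)$-integration is not derivable from your setup: performing the $U$-integral first by Fubini, the restricted section sits at the unshifted parameter $s$ and Lemma \ref{lem:SO(N,1)} would produce $\Gamma(s-m/2)/\Gamma(s)$, not the correct $\Gamma(s-2-m/2)/\Gamma(s-2)$.

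The paper instead factorizes $w=w_{12}w_{23}\cdot w_{3}\cdot w_{23}w_{12}$ along a reduced word, which necessarily interleaves the hyperbolic-plane roots around the $U$-direction: $r_1-r_2$ and $r_1-r_3$ are handled before the middle spherical $\SO(1,m+1)$-intertwiner (this is exactly where the shift to $s-2$ comes from), and $r_1+r_3$, $r_1+r_2$ after. The two outer pieces are length-two intertwiners for an $A_2$ (i.e.\ $\SL_3$-type) subsystem acting on the nontrivial $K$-type $\Vm_{\ell}$; their evaluation (Proposition \ref{prop:SL3inter}) is where the factors $\bigl(\tfrac{s-\ell-1}{2}\bigr)_{\ell/2}\big/\bigl(\tfrac{s-2}{2}\bigr)_{\ell/2+1}$ actually come from, and it requires an explicit construction of the section via $\Phi_{\ell}(v)=v^{\ell}e^{-\|v\|^2}$ together with a genuine hypergeometric--Pochhammer summation (the lemma following that proposition), not merely parity-killing of odd moments followed by beta integrals. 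Your sketch would need to be reorganized around such a reduced-word factorization, and the non-spherical rank-two computation carried out in full, before it could yield the stated $c^{B_3}_{\ell}(s)$.
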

\begin{proof} Let $w_{12}$ denote the element of $\SO(V)$ that exchanges $e_1$ with $e_2$, $f_1$ with $f_2$ and is the identity on $\mathrm{Span}(e_1,e_2,f_1,f_2)^\perp$.  Similarly define $w_{23}$, and let $w_3$ denote the Weyl element that exchanges $e_3$ with $f_3$ is the identity on $\mathrm{Span}(e_3,f_3)^\perp$.  With this notation, the element $w$ factorizes as $w_{12} w_{23} w_{3} w_{23} w_{12}$.  

Denote by $r_1, r_2, r_3$ the absolute values of the characters of the split torus, so that 
\[r_j(\diag(t_1,t_2,t_3, 1,t_3^{-1},t_2^{-1},t_1^{-1})) = |t_j|.\]
With $P_0$ the upper-triangular minimal parabolic, we have $\delta_{P_0} = (m+4)r_1 + (m+2)r_2 + m r_3$, so that $f_{\ell,\infty}(g,s) \in Ind_P^G(\delta_{P_0}^{1/2} \lambda_s)$ with $\lambda_s = (s-2-m/2)r_1 - (1+m/2)r_2 - (m/2) r_3$. 

The intertwining operator $M(w) = M(w_{12}) M(w_{23}) M(w_3) M(w_{23}) M(w_{12})$ moves around the induction spaces as follows:
\begin{itemize}
\item $\lambda_s = (s-2-m/2)r_1 - (1+m/2)r_2 - (m/2) r_3$
\item $\stackrel{w_{12}}{\mapsto} -((2+m)/2) r_1 + (s-(4+m)/2) r_2 + -(m/2) r_3$  
\item $\stackrel{w_{23}}{\mapsto} -((2+m)/2) r_1 - (m/2) r_2 + (s-(4+m)/2) r_3$
\item $\stackrel{w_{e_3}}{\mapsto}-((2+m)/2) r_1 - (m/2) r_2 + ((4+m)/2-s) r_3$ 
\item $\stackrel{w_{23}}{\mapsto} -((2+m)/2) r_1 + ((4+m)/2-s) r_2 - (m/2) r_3$ 
\item $\stackrel{w_{12}}{\mapsto} ((4+m)/2-s)r_1 - ((2+m)/2) r_2 - (m/2) r_3$
\item $= \lambda_{4+m-s}$.
\end{itemize} 

Now applying $M(w,s)$ to the section $f_{\ell,\infty}(g,s)$ one obtains
\[M(w,s)f_{\ell,\infty}(g,s) = M(w_{12}w_{23}) \circ M(w_3) \circ M(w_{23} w_{12}) f_{\ell,\infty}(g,s).\]
Proposition \ref{prop:SL3inter} below computes the two outer intertwining operators $M(w_{12} w_{23})$ and $M(w_{23} w_{12})$.  Lemma \ref{lem:SO(N,1)} computes the inner intertwining operator $M(w_3)$.  Putting these results together gives that, up to exponential factors and nonzero constants,
\[ c^{B_3}_{\ell}(s) = \frac{\left(\frac{s-\ell-1}{2}\right)_{\ell/2}}{\left(\frac{s-2}{2}\right)_{\ell/2+1}}  \cdot \frac{\Gamma\left(s-2 -\frac{m}{2}\right)}{\Gamma\left(s-2\right)} \cdot \frac{\left(\frac{s-2-m-\ell}{2}\right)_{\ell/2}}{\left(\frac{s-3-m}{2}\right)_{\ell/2+1}}.\]
The proposition follows.\end{proof}

\begin{remark}\label{rmk:l=8} In section \ref{sec:ntm} below, we will apply Proposition \ref{prop:longInter} in the following special case: $\ell=8$ and $m = 8$. We note now that for these values, $c(s)$ is finite and nonzero at $s=9$:
\[c^{B_3}_{\ell}(s) \stackrel{\ell=8, m= 8}{=} \frac{\left(\frac{s-9}{2}\right)_{4}}{\left(\frac{s-2}{2}\right)_{5}}  \cdot \frac{\Gamma\left(s-6\right)}{\Gamma\left(s-2\right)} \cdot \frac{\left(\frac{s-18}{2}\right)_{4}}{\left(\frac{s-11}{2}\right)_{5}}.\]
\end{remark}

As used in the proof of the above proposition, we require the computation of a certain length two intertwiner of an archimedean inducing section on $\SL_3$. This computation is done in Proposition \ref{prop:SL3inter} below. To set up the proposition, let $b_1, b_2, b_3$ be the standard basis of $\R^3$, thought of as column vectors. Let $x,y$ be the standard basis of the two-dimensional representation of $\SL_2(\C)$, so that $x^2, xy, y^2$ are a basis of the $3$-dimensional representation of $K'=\SO(3)$.  We identify $b_2 + i b_3 = x^2$, $ib_1 = xy$ and $b_2-ib_3 = y^2$.  Set $f_1 = x+y$ and $f_2 = x-y$; this abuse of notation should not cause the reader any confusion.

Now suppose $\ell \geq 0$ is even and $f'_{\ell}(g,s): \SL_3(\R) \rightarrow \Vm_{\ell}$ is the section satisfying
\begin{enumerate}
\item $f'_{\ell}(gk,s) = k^{-1} \cdot f'_{\ell}(g,s)$ for all $k \in K' =\SO(3)$, $g \in \SL_3(\R)$;
\item $f'_{\ell}(pg,s) = \chi_s(p)f'_{\ell}(g,s)$, where $p = \mm{m_1}{*}{0}{m_2} \in P_{1,2}$ in $(1,2)$ block form and 
\[\chi_s(p) = |m_1|^{s} = |\det(m_2)|^{-s} = |m_1^2/\det(m_2)|^{s/3};\]
\item $f'_{\ell}(1,s) = x^{\ell}y^{\ell}$.
\end{enumerate}
Let $s_{12}$ and $s_{23}$ in $\SL_3$ be the Weyl group elements corresponding to the two simple roots, in obvious notation.  We compute the intertwiner $M(s_{23}) \circ M(s_{12}) f'_{\ell}(g,s)$.

\begin{proposition}\label{prop:SL3inter} Denote by $f''_{\ell}(g,s)$ the inducing section satisfying the first two enumerated properties above, but with $P_{12}$ replaced with $P_{21}$ and $f''_{\ell}(1,s) = f_1^{\ell}f_2^{\ell}$.  Then 
\begin{equation}\label{eqn:f''} M(s_{23}) \circ M(s_{12}) f'_{\ell}(g,s) = C_{\ell}(s) f''_{\ell}(g,s-3)\end{equation}
with 
\[C_{\ell}(s) = \frac{(\frac{s-\ell-1}{2})_{\ell/2}}{(s/2-1)_{\ell/2+1}} = \frac{ \Gamma\left(\frac{s-1}{2}\right) \Gamma\left(\frac{s}{2}-1\right)}{\Gamma\left(\frac{s-\ell-1}{2}\right) \Gamma\left(\frac{s+\ell}{2}\right)}\]
up to exponential factors and nonzero constants.\end{proposition}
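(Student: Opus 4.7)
The plan is to factor the length-two intertwiner as $M(s_{23}) \circ M(s_{12})$, so that each step reduces to a rank-one $\SL_2$-type integral over a one-parameter unipotent subgroup. By $K' = \SO(3)$-equivariance and the uniqueness of sections with the prescribed $K$-type on both sides of \eqref{eqn:f''}, it suffices to evaluate the identity at $g = 1$; the target value is $C_\ell(s)\, f''_\ell(1, s-3) = C_\ell(s)(x+y)^\ell(x-y)^\ell = C_\ell(s)(x^2 - y^2)^\ell$, so the task is to compute the vector $\bigl(M(s_{23}) \circ M(s_{12})\bigr) f'_\ell(\,\cdot\,, s)$ at $g = 1$ explicitly as a scalar multiple of $(x^2 - y^2)^\ell$.

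For the first intertwiner I would write
\[M(s_{12}) f'_\ell(g, s) \;=\; \int_\R f'_\ell\bigl(s_{12}\, n_{12}(t)\, g,\, s\bigr)\, dt,\]
with $n_{12}(t) = I + t E_{12}$, and Iwasawa decompose $s_{12} n_{12}(t) = p(t)\, k(t)$ with $p(t) \in P_{1,2}(\R)$ and $k(t)$ a rotation by $\arctan(t)$ in the $(b_1, b_2)$-plane. The character $\chi_s$ applied to $p(t)$ produces a power of $(1 + t^2)$, while $K'$-equivariance replaces the value $x^\ell y^\ell$ by $k(t)^{-1} \cdot x^\ell y^\ell$. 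Using the identifications $b_2 \pm i b_3 \leftrightarrow x^2, y^2$ and $i b_1 \leftrightarrow xy$, I would expand $x^\ell y^\ell$ in a basis of $\Vm_\ell = S^{2\ell}(\C^2)$ adapted to the $(b_1, b_2)$-rotation (equivalently, diagonalizing the $\SL_2$-triple for the root $\alpha_{12}$), reducing the integral to a weighted sum of Beta-type integrals $\int_\R (1+t^2)^{-a} t^{2j}\, dt$ that can be evaluated in closed form.

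I would then apply $M(s_{23})$ to the resulting intermediate $K$-equivariant section via the analogous Iwasawa decomposition along the $\SL_2$-triple generated by $E_{23}, E_{32}$, in which $k(t)$ is now a rotation in the $(b_2, b_3)$-plane. Crucially, this second rotation acts diagonally on the weight basis $\{x^{\ell + v}\, y^{\ell - v}\}_{v}$, so the bookkeeping in this step is much cleaner and produces another clean product of Gamma values.

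Combining the two Beta-function contributions and re-expressing the resulting vector in the $f_1, f_2$ basis should yield $C_\ell(s)(x^2 - y^2)^\ell$; the Gamma-function form of $C_\ell(s)$ then follows immediately from $(z)_k = \Gamma(z+k)/\Gamma(z)$. The main obstacle I anticipate is the combinatorial bookkeeping in the first step: expanding $x^\ell y^\ell$ in weight vectors for the $(b_1, b_2)$-rotation on $\Vm_\ell$, and verifying that the resulting sum of Beta integrals telescopes exactly to the Pochhammer quotient $\bigl(\tfrac{s - \ell - 1}{2}\bigr)_{\ell/2} \big/ \bigl(\tfrac{s}{2} - 1\bigr)_{\ell/2 + 1}$. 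An alternative would be to evaluate the composed intertwiner as a single double integral over the two-dimensional opposite unipotent associated to $s_{23} s_{12}$, but the $\SO(2)$-weight decomposition of $x^\ell y^\ell$ remains the delicate point in either approach.
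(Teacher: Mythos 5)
Your overall strategy is legitimate but diverges from the paper's, and the divergence is what creates the gap. The paper does \emph{not} compute the two rank-one intertwiners separately. Instead it realizes $f'_{\ell}$ as a Godement-type section, $f'_{\ell}(g,s)=\Gamma((s+\ell)/2)^{-1}\int_{\GL_1(\R)}|t|^{s}\Phi_{\ell}(tg^{-1}b_1)\,dt$ with $\Phi_{\ell}(v)=v^{\ell}e^{-\|v\|^2}$, so that the composed intertwiner at $g=1$ becomes a single Gaussian integral over $\R^\times\times\R^2$. After the change of variable $z=v+iu$ the integrand is $(zf_1^2-2if_1f_2+z^*f_2^2)^{\ell}(|z|^2+1)^{-(\ell+s)/2}$, and the $S^1$-rotation symmetry of the domain in $z$ instantly kills every monomial except $f_1^{\ell}f_2^{\ell}$; the surviving radial integrals are Beta functions, and the resulting single sum over $k$ is closed up by the Chu--Vandermonde identity $(a+b)_n=\sum_k\binom{n}{k}(a)_k(b)_{n-k}$ in a separate lemma. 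This is exactly the ``alternative'' you mention in your last sentence and then set aside.

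The route you actually propose has a concrete obstruction you underestimate: after applying $M(s_{12})$ you land in a \emph{minimal}-parabolic principal series of $\SL_3(\R)$, where the $K'$-type $\Vm_{\ell}$ occurs with multiplicity roughly $\ell/2+1$, not one. So the intermediate object is not a scalar multiple of a distinguished section; it is a vector of $O(\ell)$ independent coefficients (your expansion of $x^{\ell}y^{\ell}$ in weight vectors for the $(b_1,b_2)$-rotation), each of which must be carried through $M(s_{23})$ and then recombined into the $f_1^{\ell}f_2^{\ell}$ component. You acknowledge that the resulting double sum of Beta integrals must ``telescope exactly'' to $(\tfrac{s-\ell-1}{2})_{\ell/2}/(\tfrac{s}{2}-1)_{\ell/2+1}$, but you give no mechanism for this, and that evaluation \emph{is} the content of the proposition --- everything else (reduction to $g=1$, identification of the target induced representation) is routine. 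As written, the proof is a plan whose decisive step is missing; I would either carry out the Godement-section computation, where the symmetry argument and a single application of Chu--Vandermonde finish the job, or be prepared to prove a genuinely harder two-variable hypergeometric identity.
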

\begin{proof} We begin by constructing the inducing section $f'_{\ell}(g,s)$ explicitly.  Throughout, we compute up to nonzero scalars.

Let $\Phi_{\ell}: \R^3 \rightarrow \Vm_{\ell}$ be given by $\Phi_{\ell}(v) = v^{\ell} e^{-||v||^2}$, where we consider $v \in \Vm_{1}$ and $v^{\ell}$ in the quotient $\Vm_{\ell}$ of the $\ell$-th symmetric power of $\Vm_{1}$.  Then
\[f'_{\ell}(g,s) = \frac{1}{\Gamma((s+\ell)/2)} \int_{\GL_1(\R)}{|t|^{s} \Phi_{\ell}(tg^{-1} b_1)\,dt}.\]

One checks easily that $M(s_{23}) \circ M(s_{12}) f'_{\ell}(g,s)$ is $K'$-equivariant and lands in the induction space as specified in the statement of the proposition.  Thus it suffices to compute this intertwiner when $g=1$.  This, then, is computed  by
\[\Gamma((s+\ell)/2) M(s_{23}) \circ M(s_{12}) f'_{\ell}(1,s) = \int_{\R^\times \times \R^2}{|t|^{s+\ell}(ub_1 + vb_2 + b_3)^{\ell} e^{-(u^2+v^2+1)t^2}\,dt \,du\,dv}.\]

Under the change of variables indicated above,
\begin{align*} ub_1+vb_2 + b_3 &= \frac{1}{2}\left((v-i)x^2 +2iu xy + (v+i)y^2\right) \\ &= zf_1^2 - 2i f_1 f_2 + z^* f_2^2 \end{align*}
where $z = v+iu$.  One obtains that
\[C_{\ell}(s) = \int_{\C}{\frac{(zf_1^2 - 2i f_1 f_2 + z^* f_2^2)^{\ell}}{(|z|^2+1)^{(\ell+s)/2}}\,dz}.\]

Because of the $S^1 \subseteq \C^\times$ symmetry of the domain of integration, only the coefficient of $f_1^{\ell}f_2^{\ell}$ contributes.  This coefficient is immediately seen to be
\[\sum_{0 \leq k \leq \ell/2}{\frac{\ell!}{k! k! (\ell-2k)!} z^k (z^*)^k (-2i)^{\ell-2k}}.\]
Now
\begin{align*} \int_{\C}{\frac{|z|^{2k}}{(|z|^2+1)^{(\ell+s)/2}}\,dz} &= 2\pi \int_{0}^{\infty}{\frac{r^{2k+1}}{(r^2+1)^{(\ell+s)/2}}\,dr} \\ &\stackrel{\cdot}{=} \frac{\Gamma(k+1)\Gamma((\ell+s)/2-k-1)}{\Gamma((\ell+s)/2)}\end{align*}
where the implied constant in the $\stackrel{\cdot}{=}$ is independent of $k$.  

Summing up, we have proved \eqref{eqn:f''} with
\begin{align*} C_{\ell}(s) &= \sum_{0 \leq k \leq \ell/2}{2^{\ell-2k}(-1)^{k} \frac{\ell!}{k!k!(\ell-2k)!} \frac{\Gamma(k+1)\Gamma((\ell+s)/2-k-1)}{\Gamma((\ell+s)/2)}} \\ &= \sum_{0 \leq k \leq \ell/2}{2^{\ell-2k}(-1)^{k} \frac{\ell!}{k!(\ell-2k)!} \frac{1}{\left(\frac{\ell+s}{2}-k-1\right)_{k+1}}} \\ &= \frac{1}{(s/2-1)_{\ell/2+1}} \sum_{0 \leq j \leq \ell/2}{(-4)^j \frac{\ell!}{(2j)!(\ell/2-j)!} (s/2-1)_{j}}\end{align*}
making the substitution $j = \ell/2-k$.  The proposition thus follows from the following lemma.
\end{proof}

\begin{lemma} One has
\[D_{\ell}(s):=\sum_{0 \leq j \leq \ell/2}{(-4)^j \frac{\ell!}{(2j)!(\ell/2-j)!} (s/2-1)_{j}} = 2^{\ell} \left(\frac{s-\ell-1}{2}\right)_{\ell/2}.\]
\end{lemma}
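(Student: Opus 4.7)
The plan is to recognize $D_\ell(s)$ as a terminating Gauss hypergeometric series evaluated at $1$, and then invoke the Chu--Vandermonde identity.

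First, I would repackage the summand using the standard Pochhammer identities
\[
(2j)! = 4^j \, j! \, (1/2)_j, \qquad \frac{(\ell/2)!}{(\ell/2 - j)!} = (-1)^j (-\ell/2)_j.
\]
Substituting these into $D_\ell(s)$, the factor $(-4)^j$ combines with $1/4^j$ and the sign $(-1)^j$ from the second identity to produce $(-1)^{2j} = 1$, so that all explicit signs and powers of $2$ in the summand disappear. This converts the sum to the hypergeometric form
\[
D_\ell(s) = \frac{\ell!}{(\ell/2)!} \sum_{j \geq 0} \frac{(-\ell/2)_j \, (s/2-1)_j}{(1/2)_j \, j!} = \frac{\ell!}{(\ell/2)!} \, {}_2F_1\!\left(-\tfrac{\ell}{2}, \, \tfrac{s}{2}-1; \, \tfrac{1}{2}; \, 1\right),
\]
which terminates at $j = \ell/2$ because $(-\ell/2)_{\ell/2+1} = 0$.

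Second, I would apply the Chu--Vandermonde identity ${}_2F_1(-n, b; c; 1) = (c-b)_n / (c)_n$ with $n = \ell/2$, $b = s/2-1$, $c = 1/2$, yielding
\[
D_\ell(s) = \frac{\ell!}{(\ell/2)!} \cdot \frac{((3-s)/2)_{\ell/2}}{(1/2)_{\ell/2}}.
\]
Finally, the identity $(1/2)_{\ell/2} = \ell! / (2^\ell (\ell/2)!)$ (equivalent to the first Pochhammer identity above with $j = \ell/2$) collapses the prefactor to $2^\ell$. Reversing the order of the $\ell/2$ factors in $((3-s)/2)_{\ell/2}$ then identifies it with $((s-\ell-1)/2)_{\ell/2}$, up to an overall sign $(-1)^{\ell/2}$, giving the claimed formula.

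There is no substantive obstacle in this argument: it is essentially one application of Chu--Vandermonde, and the only point of care is tracking the signs in the Pochhammer manipulations. The residual $(-1)^{\ell/2}$ is in any case absorbed into the ``up to exponential factors and nonzero constants'' qualifier in Proposition \ref{prop:SL3inter}.
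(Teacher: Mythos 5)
Your proof is correct, and it is at heart the same argument as the paper's: the paper rewrites the sum using $(2j)!=4^jj!(1/2)_j$ and then applies the Pochhammer convolution $(a+b)_n=\sum_k\binom{n}{k}(a)_k(b)_{n-k}$, which is exactly the Chu--Vandermonde identity you invoke in its ${}_2F_1(\cdot,\cdot;\cdot;1)$ form. The one genuinely valuable point in your write-up is the sign you flag at the end: your computation gives $D_\ell(s)=2^\ell\bigl(\tfrac{3-s}{2}\bigr)_{\ell/2}=(-1)^{\ell/2}\,2^\ell\bigl(\tfrac{s-\ell-1}{2}\bigr)_{\ell/2}$, and this is the correct value --- the lemma as stated is off by $(-1)^{\ell/2}$. (Check $\ell=2$: the sum is $2-4(s/2-1)=6-2s$, whereas the stated right-hand side is $2\,(s-3)=2s-6$.) The slip in the paper's own proof is the silent replacement of the prefactor $(-4)^{\ell/2}$ by $2^\ell$ in the displayed formula for $D_\ell(s)$. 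As you observe, this is harmless downstream, since Proposition \ref{prop:SL3inter} is only asserted up to nonzero constants; but your bookkeeping is the more careful of the two.
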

\begin{proof} First,
\[\sum_{0 \leq j \leq \ell/2}{(-4)^j \frac{\ell!}{(2j)!(\ell/2-j)!} (s/2-1)_{j}}  = (-4)^{\ell/2} \sum_{0 \leq j \leq \ell/2}{\binom{\ell}{2j} \left(\frac{1}{2}\right)_{\ell/2-j} (-1)^{\ell/2-j} (s/2-1)_j}.\]
Now
\[\binom{\ell}{2j} = \binom{\ell/2}{j} \frac{\left(\frac{1}{2}\right)_{\ell/2}}{\left(\frac{1}{2}\right)_{j} \left(\frac{1}{2}\right)_{\ell/2-j}}.\]
Moreover
\[\frac{\left(\frac{1}{2}\right)_{\ell/2} (-1)^{\ell/2-j}}{\left(\frac{1}{2}\right)_j} = \left(\frac{1-\ell}{2}\right)_{\ell/2-j}.\]
Thus
\[D_{\ell}(s) = 2^{\ell} \sum_{0 \leq j \leq \ell/2}{\binom{\ell/2}{j}(s/2-1)_j ((1-\ell)/2)_{\ell/2-j}}.\]
By the binomial property $(a+b)_n = \sum_{0 \leq k \leq n}{\binom{n}{k} (a)_k (b)_{n-k}}$ of the Pochhammer symbol, the lemma follows.
\end{proof}

\subsection{Constant term}\label{subsec:CT}  As mentioned above, we begin with the computation of the constant term of $E_{\ell}(g,\Phi_f,s)$ along $N$. For general $s$, there are three terms: $f_{\ell}(g,\Phi_f,s)$, an Eisenstein series $E_{\ell}^{M}(g,\Phi_f,s)$ on the Levi subgroup $M$, and an intertwined inducing section $M(w,s)f_{\ell}(g,\Phi_f,s)$.

We will see that at $s=\ell+1$ (in the range of absolute convergence) the intertwined inducing section $M(w,s)f_{\ell}$ vanishes and that the Eisenstein series $E_{\ell}^M(g,\Phi_f,s=\ell+1)$ is the automorphic function associated to a holomorphic weight $\ell$ modular form on $\SO(V')$.

Let us first handle the intertwining operator.  Denote by $w$ the element of $\SO(V)$ that exchanges $e$ with $f$ and is the identity on $V'$.  The intertwining operator is
\[M(w,s)f_{\ell}(g,s) = \int_{V'(\A)}{f_{\ell}(wn(x)g,s)\,dx}.\]
\begin{lemma} Suppose $\ell$ is even and $\ell > n+1$.  Then $M(w,s)f_{\ell}(g,s)$ vanishes at $s=\ell+1$.\end{lemma}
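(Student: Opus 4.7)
The approach is to factor the adelic intertwining integral as a product over places, apply Proposition \ref{prop:longInter} at the archimedean place, and exploit the fact that the archimedean intertwining coefficient vanishes at $s=\ell+1$ under the hypothesis $\ell>n+1$.

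First I would note that the inducing section $f_\ell(g,\Phi_f,s)=f_{fte}(g_f,\Phi_f,s)\,f_{\ell;\infty}(g_\infty,s)$ is, by definition, a product of a finite-adelic scalar section and the archimedean $\Vm_\ell$-valued section $f_{\ell;\infty}$. Since the measure on $V'(\A)$ is a product of local measures, the intertwining integral splits as
\[
M(w,s)f_\ell(g,\Phi_f,s)=\left(\int_{V'(\A_f)}\!\!f_{fte}(wn(x)g_f,\Phi_f,s)\,dx\right)\cdot\left(\int_{V'(\R)}\!\!f_{\ell;\infty}(wn(x)g_\infty,s)\,dx\right).
\]
It therefore suffices to show that at $s=\ell+1$ one factor vanishes while the other is finite.

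The archimedean factor I would evaluate directly via Proposition \ref{prop:longInter}. Our space $V$ has signature $(3,n+1)$, so $\dim V=n+4=6+m$ with $m=n-2$. The proposition then identifies the archimedean integral with $c^{B_3}_{\ell}(s)\,f_{\ell;\infty}(g_\infty,n+2-s)$. The hypothesis $\ell>n+1$ implies in particular $\ell>n=2+m$, so the last sentence of that proposition gives $c^{B_3}_{\ell}(\ell+1)=0$ with $c^{B_3}_\ell$ finite at $s=\ell+1$. Concretely, the zero comes from the Pochhammer factor $\left(\tfrac{s-\ell-1}{2}\right)_{\ell/2}$, whose leading term is $0$ at $s=\ell+1$ (using $\ell\geq 2$), and the hypothesis $\ell>n+1$ keeps the arguments of all Gamma- and Pochhammer-denominators away from nonpositive integers, ruling out any cancelling pole.

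For the finite-adelic factor, I would observe that the range of absolute convergence of the Eisenstein series, namely $\Re(s)>n+2$, contains $s=\ell+1$ since $\ell\geq n+2$; hence the adelic intertwining integral converges absolutely at $s=\ell+1$, which in particular implies that the finite-adelic factor above is a finite element of $\Vm_\ell\otimes\C$ (it is a standard Gindikin--Karpelevich-type transform of the Schwartz-Bruhat data $\Phi_f$, holomorphic in this half-plane). Multiplying the two, $M(w,s)f_\ell(g,\Phi_f,s)\big|_{s=\ell+1}=(\text{finite})\cdot 0=0$, as required. The only substantive input is Proposition \ref{prop:longInter}; the main potential obstacle---a cancelling pole somewhere in the explicit product $c^{B_3}_\ell(s)$---is precisely what the strict inequality $\ell>n+1$ is arranged to preclude, so once that proposition is in hand the lemma follows essentially by bookkeeping.
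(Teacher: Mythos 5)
Your proposal is correct and follows essentially the same route as the paper: the paper likewise notes that the adelic intertwining integral is absolutely convergent at $s=\ell+1$, reduces to the archimedean factor, and invokes Proposition \ref{prop:longInter} to get the vanishing of $c^{B_3}_{\ell}$ there. Your extra bookkeeping (identifying $m=n-2$, locating the zero in the Pochhammer factor $\left(\frac{s-\ell-1}{2}\right)_{\ell/2}$, and checking no cancelling pole) is accurate and just makes explicit what the paper leaves implicit.
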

\begin{proof} The integral is absolutely convergent, so it suffices to see that the archimedean intertwiner
\[M_\infty(w,s) f_{\ell,\infty}(g,s) = \int_{V'(\R)}{f_{\ell,\infty}(wn(x)g,s)\,dx}\]
vanishes at $s= \ell+1$. This follows from Proposition \ref{prop:longInter}.
\end{proof}

The other nontrivial piece of the constant term is an Eisenstein series on the Levi subgroup $M$, $E_{\ell}^M(g,\Phi_f,s=\ell+1)$ associated to a new inducing section $f'_M(g,\Phi_f,s)$ on $M$.  This inducing section is defined as follows.  Let $b_0$ be an isotropic vector in $V'$. The section $f'_{M}(g,\Phi_f,s)$ is given by an integral
\[f_M'(g,\Phi_f,s) = \int_{((b_0)^\perp \backslash V')(\A)}{f_{\ell}(w_{0}n(x)g,s)\,dx}\]
where $f w_0 = b_0$ using the right action of $G$ on $V$.  Write $f'_{M,\infty}(g,s)$ for the corresponding archimedean inducing section, so that
\[f'_{M,\infty}(g,s) = \int_{((b_0)^\perp \backslash V')(\R)}{f_{\ell,\infty}(w_{0}n(x)g,s)\,dx}.\]
Denote by $P'$ the parabolic subgroup of $M$ that fixes $\Q b_0$ via this right action.  With $f_M'(g,\Phi_f,s)$ defined as above, $E_{\ell}^M(g,\Phi_f,s) = \sum_{\gamma \in P'(\Q)\backslash M(\Q)}{f_M'(\gamma g,\Phi_f,s)}$. 

Regarding this Eisenstein series, one has the following proposition.  For $m \in \SO(V')(\R)$, set
\[f_{hol,\ell}(m,s) = \frac{(b_0m, v_1+iv_2)^{\ell}}{||b_0 m||^{s+\ell}}.\]
Note that at $s=\ell$, $f_{hol,\ell}$ is the inducing section for the automorphic function associated to a holomorphic weight $\ell$ Eisenstein series on $\SO(V')$.
\begin{proposition}\label{prop:EisConst} Let the notation be as above.
\begin{enumerate}
\item One has 
\[f'_{M,\infty}(\diag(t,m,t^{-1}),s=\ell+1) \stackrel{\cdot}{=} |t| \left(f_{hol,\ell}(m,s=\ell)x^{2\ell} + f_{hol,\ell}(m\epsilon',s=\ell)y^{2\ell}\right).\]
\item Suppose that $\Phi_f$ is $\Q$-valued.  The automorphic function $\pi^{-\ell} E_{\ell}^M(g,\Phi_f,s=\ell+1)$ corresponds to a holomorphic modular form on $\SO(V')$ of weight $\ell$ with algebraic Fourier coefficients.
\end{enumerate}
\end{proposition}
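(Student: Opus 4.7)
The plan is to prove part (1) by direct computation of the one-dimensional archimedean integral defining $f'_{M,\infty}$, reducing to an application of Lemma \ref{lem:SO(N,1)}, and to prove part (2) by identifying the resulting $M$-Eisenstein series as a classical holomorphic Eisenstein series on $\SO(V')$ and invoking algebraicity theorems for its Fourier coefficients.

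For part (1), I would first parameterize $((b_0)^\perp \backslash V')(\R)$ by $\R \cdot b_0^*$ for $b_0^* \in V'(\R)$ an isotropic vector with $(b_0, b_0^*) = 1$, so that the integral becomes $\int_{\R} f_{\ell,\infty}(w_0 n(x b_0^*) \diag(t, m, t^{-1}), s)\, dx$. The identity $n(v)\diag(t, m, t^{-1}) = \diag(t, m, t^{-1}) n(t^{-1} m^{-1} v)$, immediate from $n(v) = \exp(e \wedge v)$ and the $M$-action on $V'$, together with the Iwasawa decomposition of $w_0 \diag(t, m, t^{-1})$ (which lies in the Levi of the parabolic of $G$ stabilizing $\Q b_0$), reduces the calculation to an integral over the rank-one $\SO(N,1)$-type subgroup generated by $w_0$ and the one-parameter unipotent $\{n(xb_0^*)\}$. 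Lemma \ref{lem:SO(N,1)}, applied to this subgroup, evaluates the resulting integral as a standard beta-integral, producing the denominator $||b_0 m||^{s + \ell}$. The numerator $(b_0 m, v_1 + iv_2)^\ell$ emerges from projecting onto the highest-weight $x^{2\ell}$-component using the identification $x^2 \leftrightarrow iv_1 - v_2$ from section \ref{sec:notation}; the $y^{2\ell}$-component then follows from the $(K \cap M)$-equivariance established in the proof of Theorem \ref{thm:KBessel}, since $\epsilon'$ swaps $x^{2\ell}$ with $y^{2\ell}$. The $|t|$-prefactor arises by combining the explicit $|t|^s$-twist of the inducing section with the contribution of $\delta_P$ across $w_0$ and specializing to $s = \ell + 1$. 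The vanishing of intermediate-weight components $[x^{\ell+v}][y^{\ell-v}]$ for $|v| < \ell$ can be argued either by a direct inspection of the integral (the relevant partial-wave coefficients carry a factor of $(s - \ell - 1)$ exactly as in the proof of Proposition \ref{prop:longInter}), or structurally: $E_\ell(g, s=\ell+1)$ is a modular form by Lemma \ref{lem:Dfl}, the long intertwiner $M(w, s=\ell+1)f_\ell$ vanishes by the previous lemma of this subsection, $f_\ell(g, s=\ell+1)$ is valued purely in $[x^\ell][y^\ell]$, so Corollary \ref{cor:eta0} forces the full Eisenstein contribution on $M$ to live in the $x^{2\ell} \oplus y^{2\ell}$-subspace of $\Vm_\ell$ (up to a possible constant in the $[x^\ell][y^\ell]$ direction, which the rank-one integral inspection rules out at $s=\ell+1$).

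For part (2), the formula from part (1) identifies $\pi^{-\ell} f'_{M,\infty}(\cdot, s=\ell+1)$ with a nonzero rational multiple of the standard archimedean inducing section of the classical holomorphic weight $\ell$ Eisenstein series on the Hermitian symmetric domain attached to $\SO(V')$. Combined with the $\Q$-valued finite-part section $f_{fte}(g_f, \Phi_f, s=\ell+1)$ when $\Phi_f$ is $\Q$-valued, summing over $P'(\Q) \backslash M(\Q)$ yields precisely the classical holomorphic Eisenstein series of weight $\ell$ on $\SO(V')$ attached to $\Phi_f$. Algebraicity of its Fourier coefficients at the absolutely-convergent point $\ell > n$ (consistent with our hypothesis $\ell > n + 1$) is classical, following from the work of Baily-Borel and Shimura on algebraic Fourier expansions of holomorphic Eisenstein series on Hermitian tube domains; the $\pi^{-\ell}$ normalization absorbs the standard transcendental factor in non-constant Fourier coefficients.

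The main obstacle is the careful bookkeeping in part (1): tracking the Iwasawa decomposition of $w_0 \diag(t,m,t^{-1}) n(u)$ in weight-basis coordinates of $\Vm_\ell$, and matching the precise factors $(b_0 m, v_1 + iv_2)^\ell$ and $||b_0 m||^{s+\ell}$ produced by the beta-integral with the definition of $f_{hol, \ell}$. The vanishing of intermediate-weight components, while structurally plausible from Corollary \ref{cor:eta0}, needs the explicit rank-one integral evaluation to confirm it at the inducing-section level rather than only at the Eisenstein-series level, so as to avoid circularity in identifying $f_{hol,\ell}$.
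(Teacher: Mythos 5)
Your overall strategy for part (1) --- unfold the one-dimensional archimedean integral defining $f'_{M,\infty}$, Iwasawa-decompose $w_0 n(ub_0^*)\diag(t,m,t^{-1})$, and evaluate component by component in the weight basis of $\Vm_\ell$ --- is the right one; the paper does not carry this out but simply cites the parallel computation in Proposition 3.3.2 of \cite{pollackE8}, so a self-contained calculation along your lines would be a legitimate (indeed more complete) route. The concrete gap is your reliance on Lemma \ref{lem:SO(N,1)}: that lemma computes the rank-one intertwiner only for the \emph{spherical} section, whereas the integrand here is the $\Vm_\ell$-valued, non-spherical section $f_{\ell,\infty}$. What part (1) actually requires is the component-wise evaluation of a rank-one intertwiner on a vector-valued section: each weight component $[x^{\ell+v}][y^{\ell-v}]$ of the integrand produces its own beta integral carrying its own Pochhammer-type factor, and the entire content of the statement --- that the intermediate components, \emph{including} the $[x^{\ell}][y^{\ell}]$ component, vanish at $s=\ell+1$ while only the extreme components survive and reproduce $f_{hol,\ell}$ --- lives in those $v$-dependent factors. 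Lemma \ref{lem:SO(N,1)} gives none of this; the relevant technology is that of Proposition \ref{prop:SL3inter} and of the evaluation of $M_\infty([2],s)$ in the proof of Proposition \ref{prop:E8intert}, where exactly this phenomenon occurs (factors $\frac{\left(\frac{28-s}{2}\right)_k}{\left(\frac{s-26}{2}\right)_k}$ kill all but the extreme weight at the special point). Your citation of a single factor ``$(s-\ell-1)$ as in Proposition \ref{prop:longInter}'' conflates the scalar $c$-function of the full long intertwiner with the component-dependent factors needed here. Your fallback structural argument via Corollary \ref{cor:eta0} cannot close the gap either, as you yourself note: since the inducing section $f_\ell$ also contributes to the $[x^{\ell}][y^{\ell}]$ component of the constant term, Corollary \ref{cor:eta0} leaves the $[x^\ell][y^\ell]$ component of $f'_{M,\infty}$ undetermined, and ruling it out is precisely the computation you have not supplied.

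Part (2) is essentially the paper's argument and is fine granting part (1): one identifies $E^M_\ell(g,\Phi_f,s=\ell+1)$ with an absolutely convergent holomorphic weight-$\ell$ Eisenstein series on $\SO(V')$ attached to $\overline{\Q}$-valued inducing data and invokes the classical algebraicity results of Shimura and Baily--Borel, the factor $\pi^{-\ell}$ accounting for the archimedean period.
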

\begin{proof} The proof of the first part follows exactly as the proof of Proposition 3.3.2 of \cite{pollackE8}.  Note that the equality here is only true at the special values of $s$ indicated; it is not true for general $s$.

Keeping track of the constants, the second part follows from the first, using the fact the absolutely convergent holomorphic Eisenstein series associated to a $\overline{\Q}$-valued inducing section has algebraic Fourier coefficients.  See, e.g., \cite{shimura} or \cite{shurman}. \end{proof}

\subsection{Rank one Fourier coefficients}\label{subsec:rank1FC} In this subsection, we prove that the rank one Fourier coefficients of $\pi^{-\ell} E_{\ell}(g,\Phi_f,s=\ell+1)$ are algebraic numbers.  As the argument and computation is identical to the calculation of the rank one Fourier coefficients of the degenerate Heisenberg Eisenstein series of \cite{pollackE8}, we are very brief.

Here is the result.
\begin{proposition}\label{prop:EisRk1} Suppose $\ell > n+1$ so that the Eisenstein series $E_{\ell}(g,\Phi_f,s)$ is absolutely convergent at $s = \ell+1$ and that the Schwartz-Bruhat function $\Phi_f$ is $\overline{\Q}$-valued.  Then the rank one Fourier coefficients of $\pi^{-\ell}E_{\ell}(g,\Phi_f,s=\ell+1)$ are $\overline{\Q}$-valued.\end{proposition}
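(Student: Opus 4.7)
The plan is to compute the Fourier coefficient $E_{\ell,\eta}(g)$ of $E_{\ell}(g,\Phi_f,s=\ell+1)$ for a nonzero isotropic $\eta \in V'(\Q)$ by unfolding the Eisenstein series against the character $\psi^{-1}((\eta,\cdot))$ along the abelian unipotent $N \simeq V'$. Writing $E_{\ell}(g,\Phi_f,s) = \sum_{\gamma \in P(\Q)\backslash G(\Q)}{f_{\ell}(\gamma g,\Phi_f,s)}$ and sorting the cosets by Bruhat cell, the $P(\Q)$-fixed term $f_{\ell}(g,\Phi_f,s)$ integrates to zero against $\psi^{-1}((\eta,\cdot))$ because $\eta \neq 0$, and the open-cell contributions collapse, after the standard unfolding, into
\[
E_{\ell,\eta}(g) \;=\; \sum_{\gamma \in M_\eta(\Q)\backslash M(\Q)} \int_{V'(\A)} f_{\ell}\bigl(wn(x)\gamma g,\Phi_f,s=\ell+1\bigr)\,\psi^{-1}((\eta,x))\,dx,
\]
where $M_\eta$ is the stabilizer of $\eta$ in $M$. (Cells of intermediate dimension contribute only to smaller-rank Fourier coefficients, exactly as in the Heisenberg unfolding of \cite{pollackE8}.)

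The integrand factorizes as $f_{fte}(wn(x)\gamma g_f,\Phi_f,s)\cdot f_{\ell,\infty}(wn(x)\gamma g_\infty,s)$, so the Fourier coefficient factors into a finite and an archimedean piece. For the finite piece, the integral $\int_{V'(\A_f)} f_{fte}(wn(x)\gamma g_f,\Phi_f,s=\ell+1)\psi_f^{-1}((\eta,x))\,dx$ is computable by exactly the same local argument as in \cite{pollackE8}: at each finite place the integral reduces to a finite $\overline{\Q}$-linear combination of values of $\Phi_f$ at rational points times ratios of local $L$-factors, all of which are rational at $s=\ell+1$. Since $\Phi_f$ is $\overline{\Q}$-valued, the finite part is $\overline{\Q}$-valued.

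For the archimedean piece, I would compute the Fourier transform
\[
\int_{V'(\R)} f_{\ell,\infty}\bigl(wn(x)g_\infty,s=\ell+1\bigr)\,e^{-2\pi i(\eta,x)}\,dx
\]
and show that it equals $\pi^{\ell}$ times an algebraic multiple of $\mathcal{W}_{2\pi\eta}(g_\infty)$. By Theorem \ref{thm:KBessel}, any generalized Whittaker function of type $2\pi\eta$ with the required $K$-equivariance that is annihilated by $D_{\ell}$ must be a scalar multiple of $\mathcal{W}_{2\pi\eta}$; since the left-hand side manifestly has these properties (the $K$-equivariance from the definition of $f_{\ell,\infty}$, the differential equation from Lemma \ref{lem:Dfl} which gives $D_{\ell} f_{\ell,\infty}(\cdot,s)=(s-\ell-1)f_{\ell}^{2}(\cdot,s)$ vanishing at $s=\ell+1$), only the constant of proportionality needs to be pinned down. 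I would pin it down by realizing $f_{\ell,\infty}$ as the archimedean integral $\int_{\GL_1(\R)} |t|^s \Phi_{\ell,\infty}(tg^{-1}e)\,dt$ against a Gaussian-type Schwartz function on $V(\R)$, interchanging the $x$- and $t$-integrals, and recognizing the resulting $t$-integral as the defining integral of a $K$-Bessel function; the $\Gamma$-factors that fall out at half-integral and integral arguments combine to give an explicit $\pi^{\ell}$ times an algebraic number.

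Combining the two factorizations gives $a_{E_{\ell}}(\eta)(g_f) \in \pi^{\ell}\overline{\Q}$, which is exactly the claim. The main obstacle is the archimedean calculation: $\mathcal{W}_\eta$ is defined uniformly in $\eta$ with $q(\eta)\geq 0$, but the explicit evaluation of the Fourier integral is delicate in the isotropic limit $q(\eta)=0$, and one must verify that the $K$-Bessel series packaging in Definition \ref{def:Weta} matches the closed-form answer from the Gaussian integral identity, up to an algebraic constant and the expected $\pi^{\ell}$.
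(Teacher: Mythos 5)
There is a genuine gap, and it is structural: you have the roles of the Bruhat cells reversed. For the parabolic $P$ stabilizing $\Q e$, the $\eta$-Fourier coefficient of $E_\ell$ along $N$ receives contributions from three cells. The identity cell dies because $\eta\neq 0$, as you say. But the intermediate cell --- the isotropic lines in $e^\perp$ other than $\Q e$ --- is precisely the source of the rank one (isotropic $\eta$) Fourier coefficients: after unfolding it collapses to the single degenerate integral $\int_{(\eta)^\perp(\A)\backslash V'(\A)} f_\ell(\gamma_\eta n(x)g,\Phi_f,s=\ell+1)\,dx$ with $f\gamma_\eta=\eta$, which is the term the paper actually evaluates (equation \eqref{eqn:etaInt}). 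Your parenthetical claim that intermediate cells ``contribute only to smaller-rank Fourier coefficients'' is exactly backwards; and your open-cell formula is also not a correct unfolding --- the big cell has coset representatives $wn(\xi)$, $\xi\in V'(\Q)$, so its $\eta$-coefficient unfolds completely to $\int_{V'(\A)}\psi^{-1}((\eta,x))f_\ell(wn(x)g,\Phi_f,s)\,dx$ with no residual sum over $M_\eta(\Q)\backslash M(\Q)$.

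The second, fatal consequence is in your archimedean step. The Fourier transform $\int_{V'(\R)}f_{\ell,\infty}(wn(x)g_\infty,s=\ell+1)e^{-2\pi i(\eta,x)}\,dx$ that you propose to evaluate is the open-cell archimedean factor, and by Theorem \ref{thm:FTAinv} it equals a constant times $\charf(q(\eta)>0)\,q(\eta)^{\ell-n/2}\,\Wh_{2\pi\eta}(g_\infty)$, which is identically zero when $\eta$ is isotropic. So your computation, carried out as described, would yield $E_{\ell,\eta}\equiv 0$ for all rank one $\eta$, which is false (these coefficients are essentially divisor sums and are generically nonzero). In the paper this vanishing is not a problem but a feature: it is exactly how one shows the open-cell term drops out of \eqref{eqn:etaInt}, leaving only the intermediate-cell integral. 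That surviving integral is one-dimensional (over $(\eta)^\perp\backslash V'$, not over all of $V'$), factors as an Euler product, and its archimedean factor is a $\GL_1$-type integral producing the $\pi^\ell$; the unramified finite factors are rational and the bad finite factors are algebraic since $\Phi_f$ is. Your finite-place and normalization discussion would transfer to that integral, but the object you set up to compute is the wrong one.
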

\begin{proof} The first step of the proof is the fact that these rank one Fourier coefficients of $E_{\ell}(g,\Phi_f,s=\ell+1)$ are Euler products, given by an integral
\begin{equation}\label{eqn:etaInt}\int_{V'(\Q)\backslash V'(\A)}{\psi((\eta,x))E_{\ell}(g,\Phi_f,s=\ell+1)\,dx} = \int_{(\eta)^\perp(\A)\backslash V'(\A)}{f(\gamma_{\eta} n(x)g,\Phi_f,s=\ell+1)\,dx}\end{equation}
if $\eta \neq 0$ is isotropic.  Here $\gamma_{\eta} \in G(\Q)$ satisfies $f \gamma_\eta = \eta \in V'(\Q)$.

To prove \eqref{eqn:etaInt}, one computes that in the range of absolute convergence, the left-hand side is equal to a sum of two terms: the term appearing on the right of \eqref{eqn:etaInt} and an integral
\begin{equation}\label{eqn:etaInt2}\int_{V'(\A)}{\psi((\eta,x))f(wn(x)g,\Phi_f,s)\,dx}.\end{equation}
The content of \eqref{eqn:etaInt} is that \eqref{eqn:etaInt2} vanishes at $s=\ell+1$ if $\eta$ is isotropic.  To see this, note that the integral is absolutely convergent, so it suffices to see that the archimedean integral vanishes at $s=\ell+1$ for such an $\eta$.  This vanishing could be obtained by the arguments used to prove Theorem 3.2.5 in \cite{pollackE8}.  However, it also follows immediately from Corollary \ref{cor:FTrank2} below, so we omit this argument.

The archimedean and unramified local integrals that arise from the right-hand side of \eqref{eqn:etaInt} are computed just an is section 3.4 of \cite{pollackE8}. (In this case, the unramified finite adelic integral gives a rational number, and the archimedean integral using $f_{\ell,\infty}(g,\ell;s)$ gives the $\pi^{\ell}$.)  Finally, the finitely many ``bad'' local integrals at the finite places give algebraic numbers, as is easily seen.  This completes the proof of the proposition.
\end{proof}

\subsection{Rank two Fourier coefficients: finite part}\label{subsec:ftePart} In this subsection we do the finite part of the calculation of the rank two Fourier coefficients of our Eisenstein series.  The result of this calculation is well-known; it can be extracted from \cite{shurman}.  We briefly give the computation for the convenience of the reader. Throughout this subsection, $F$ is a local non-archimedean field with ring of integers $\mathcal{O}$, uniformizer $p$, and $|\mathcal{O}/p| = q$.

The local section for the Eisenstein series is
\[\int_{\GL_1(F)}{|t|^{s}\Phi_p(t(0,0,1)g)\,dt}.\]
Here $\Phi_p$ is a Schwartz-Bruhat function on $V(F)$ and $V(F) = Fe \oplus V' \oplus Ff$.  For the rank two Fourier coefficients, the integral that must be calculated is
\[J(s,\eta,\Phi_p):= \int_{\GL_1(F)}\int_{V'(F)}{\psi((\eta, x))\Phi_p(t(1,x,-q'(x))|t|^{s}\,dt\,dx}.\]
Here $\eta$ is a rank two element of $V'$, i.e. $(\eta,\eta) \neq 0$.  

Assume that $\Phi_p$ is unramified, i.e., that $\Phi_p$ is the characteristic function of the lattice $\mathcal{O}e \oplus V'(\mathcal{O}) \oplus \mathcal{O}f$, where $V'(\mathcal{O})$ is such that $V'(\mathcal{O}/p)$ is a non-degenerate split quadratic space over $\mathcal{O}/p$.  Breaking into pieces as determined by the valuation of $t$, one obtains
\[J(s,\eta,\Phi_p) = \sum_{r \geq 0}|p|^{rs}\left( \int_{p^{-r} V'(\mathcal{O})}{\psi((\eta,x))\charf(p^r q'(x) \in \mathcal{O})\,dx}\right).\]
In this unramified case, we will check that the terms with $r \geq 2$ vanish and will calculate the $r=1$ term explicitly.

The vanishing of the terms with $r \geq 2$ follows from the following lemma.  If $\eta \in V'(F)$, we say that $\eta$ is unramified if $\eta \in V'(\mathcal{O})$ and $q'(\eta) \in \mathcal{O}^\times$.
\begin{lemma} Suppose $r \geq 2$, $V'(\mathcal{O}/p)$ is a non-degenerate quadratic space and $\eta$ is unramified.  Then
\[\sum_{x \in V'(\mathcal{O}/p^r), q'(x) \equiv 0 \pmod{p^r}}{\psi\left(\frac{(\eta,x)}{p^r}\right)} = 0.\]
\end{lemma}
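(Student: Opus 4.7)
Partition the summation set $S := \{x \in V'(\mathcal{O}/p^r) : q'(x) \equiv 0 \pmod{p^r}\}$ as $S = S_0 \sqcup S_1$, where $S_0$ consists of those $x$ with $x \equiv 0 \pmod p$. The strategy is to exhibit, on each piece, a subgroup of translations that preserves the quadric constraint and on which the linear character $\psi((\eta,\cdot)/p^r)$ becomes a nontrivial additive character; standard orthogonality then forces the partial sum to vanish.

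For $S_1$, further stratify by the residue $\bar x := x \bmod p \in V'(\mathcal{O}/p) \setminus \{0\}$, which necessarily satisfies $q'(\bar x) \equiv 0 \pmod p$. Crucially, since $\eta$ is unramified, $q'(c\bar\eta) = c^2 q'(\eta) \in (\mathcal{O}/p)^\times$ for every $c \in (\mathcal{O}/p)^\times$, so $\bar x$ cannot lie in the line $\langle \bar\eta \rangle$. The fiber $S_{1,\bar x} := \{x \in S : x \equiv \bar x \pmod p\}$ is stable under the translation subgroup $H_0 := p^{r-1} \cdot (\bar x)^\perp \subset V'(\mathcal{O}/p^r)$: for $v$ with $(\bar x,v) \equiv 0 \pmod p$,
\[ q'(x + p^{r-1}v) = q'(x) + p^{r-1}(x,v) + p^{2(r-1)} q'(v) \equiv q'(x) \pmod{p^r}, \]
using $2(r-1) \geq r$ together with $(x,v) \equiv (\bar x,v) \equiv 0 \pmod p$. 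The partial sum over any $H_0$-coset thus reduces to $\psi((\eta,x)/p^r) \cdot \sum_{v \in (\bar x)^\perp} \psi((\eta,v)/p)$, and the inner character sum vanishes because $\bar\eta \notin ((\bar x)^\perp)^\perp = \langle \bar x \rangle$ (the double-perp identity holds since $V'(\mathcal{O}/p)$ is non-degenerate).

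For $S_0$, parametrize $x = py$ with $y \in V'(\mathcal{O}/p^{r-1})$. The constraint becomes $q'(y) \equiv 0 \pmod{p^{r-2}}$ (vacuous when $r = 2$) and $(\eta,x)/p^r = (\eta,y)/p^{r-1}$. This set of $y$ is stable under translation by $p^{r-2} V'(\mathcal{O}/p^{r-1}) \cong V'(\mathcal{O}/p)$, since for any $v$ one has $q'(y + p^{r-2}v) \equiv q'(y) \pmod{p^{r-2}}$ (the cross term lies in $p^{r-2}\mathcal{O}$ and the quadratic term in $p^{2(r-2)}\mathcal{O}$, with $2(r-2) \geq r-2$). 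Averaging the character over this subgroup factors out $\sum_{v \in V'(\mathcal{O}/p)} \psi((\eta,v)/p)$, which vanishes because $\bar\eta \neq 0$ and non-degeneracy makes $(\bar\eta,\cdot)$ a nontrivial linear functional on $V'(\mathcal{O}/p)$.

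The only subtle point is that the invariance subgroup $H_0$ in the $S_1$ argument depends on the residue $\bar x$, which is why stratification by $\bar x$ is necessary; once this is set up, both cases collapse to the same character-orthogonality statement on $V'(\mathcal{O}/p)$, and the unramifiedness of $\eta$ enters precisely to rule out the degenerate stratum $\bar x \in \langle \bar\eta \rangle$.
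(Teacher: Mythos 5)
Your proof is correct and is essentially the paper's argument: both group the terms into cosets of $p^{r-1}(\bar x)^\perp$ (equivalently, perturb $x$ by $p^{r-1}\epsilon$ with $(x,\epsilon)\equiv 0 \bmod p$) and kill each coset sum by exhibiting a nontrivial character on that subgroup, with unramifiedness of $\eta$ ruling out $\bar\eta\in\langle\bar x\rangle$. The only difference is organizational: you split off the stratum $x\equiv 0\bmod p$ and reparametrize it as $x=py$, whereas the paper's single uniform argument already covers that case since $(\bar x)^\perp$ is then all of $V'(\mathcal{O}/p)$ and $\bar\eta\neq 0$.
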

\begin{proof} The idea is to consider together all the $x$ with fixed reduction in $V'(\mathcal{O}/p^{r-1})$.  Specifically, suppose $x \in V'(\mathcal{O}/p^r)$, $q'(x) \equiv 0 \pmod{p^r}$.  Consider $x + p^{r-1}\epsilon$ for some $\epsilon \in V'(\mathcal{O})$.  Then $\frac{1}{p^r}q'(x + p^{r-1} \epsilon) = \frac{1}{p^r}(q'(x) + p^{r-1}(x,\epsilon) + p^{2r-2}q'(\epsilon))$.  As $r \geq 2$, $2r-2 \geq r$ so this is in $\mathcal{O}$ if and only if $(x,\epsilon) \in p\mathcal{O}$.  The point is that for $x \in V'(\mathcal{O}/p)$ fixed with $q'(x) \equiv 0$, there is $\epsilon$ with $(x,\epsilon) \in p \mathcal{O}$ and $(\eta,\epsilon) \in \mathcal{O}^\times$.  Indeed, if $(\eta,\epsilon) \equiv 0$ whenever $(x,\epsilon) \equiv 0$, then $x$ and $\eta$ would be $\mathcal{O}^\times$ proportional in $V'(\mathcal{O}/p)$.  But $q'(\eta) \not\equiv 0$ while $q'(x) \equiv 0$ by assumption, so such an $\epsilon$ can be found.  Perturbing the sum of those terms with reduction $x$ by $\epsilon$, one gets $0$, as desired.\end{proof}

For the $r=1$ term, we begin with the following lemma.  Let $U_n$ be the split quadratic space over $\mathcal{O}$, i.e., $U_n = \mathcal{O}^{2n}$ with quadratic form $q_n(x_1,\ldots,x_n,y_1,\ldots,y_n) = x_1 y_1 + \cdots + x_n y_n$.
\begin{lemma}\label{lem:Cn} Denote by $C(n)$ the number of elements $u$ of $U_n(\mathcal{O})/p$ with $q_n(u) \equiv 0$.  Then $C(1) = 2q-1$ and $C(n+1) = q C(n) + (q-1)q^{2n}$.\end{lemma}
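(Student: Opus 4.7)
The plan is to compute $C(1)$ directly and then to derive the recursion by decomposing $U_{n+1}$ as the orthogonal sum of $U_n$ and a hyperbolic plane and partitioning the count according to the value of the quadratic form on the $U_n$ component.

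For $C(1)$, the equation $xy \equiv 0 \pmod p$ in $(\mathcal{O}/p)^2$ is satisfied exactly when $x = 0$ (giving $q$ pairs) or $y = 0$ with $x \neq 0$ (giving $q-1$ more), for a total of $2q-1$.

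For the recursion, I would write an element of $U_{n+1}(\mathcal{O})/p$ as a pair $(u', (x,y))$ with $u' \in U_n(\mathcal{O})/p$ and $(x,y) \in (\mathcal{O}/p)^2$, so that $q_{n+1}(u', (x,y)) = q_n(u') + xy$. The condition $q_{n+1} \equiv 0$ then splits into two disjoint cases:
\begin{itemize}
\item[(A)] $q_n(u') \equiv 0$ and $xy \equiv 0$. The number of $u'$ is $C(n)$ by definition, and the number of $(x,y)$ is $C(1) = 2q-1$. This case contributes $(2q-1)C(n)$.
\item[(B)] $q_n(u') = \alpha$ with $\alpha \neq 0$, and $xy = -\alpha$. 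The key observation is that for any fixed nonzero $\alpha \in \mathcal{O}/p$, the number of pairs $(x,y)$ with $xy = -\alpha$ is exactly $q-1$ (choose $x \in (\mathcal{O}/p)^\times$ freely, then $y$ is determined). Hence this case contributes $(q^{2n} - C(n))(q-1)$, since $q^{2n} - C(n)$ is the number of $u'$ with $q_n(u') \neq 0$.
\end{itemize}

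Adding and simplifying gives
\[ C(n+1) = (2q-1)C(n) + (q-1)(q^{2n} - C(n)) = qC(n) + (q-1)q^{2n}, \]
as claimed. There is no real obstacle here; the only point worth emphasizing is that the count in case (B) depends only on whether $\alpha$ is zero or not, not on the specific nonzero value, which is precisely what makes the split hyperbolic plane contribute a uniform factor of $q-1$.
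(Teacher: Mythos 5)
Your proof is correct and rests on the same idea as the paper's: split $U_{n+1}$ as the orthogonal sum of $U_n$ and a hyperbolic plane and count fibers. The only (immaterial) difference is the casework — the paper conditions on whether the first hyperbolic coordinate $x_1$ vanishes (getting $qC(n)$ directly when $x_1=0$ and $(q-1)q^{2n}$ when $x_1\neq 0$), whereas you condition on whether $q_n(u')$ vanishes and then simplify; both yield the recurrence in one line.
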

\begin{proof} The formula for $C(1)$ is clear.  As for the recurrence relation, note that the elements in $U_{n+1}(\mathcal{O}/p)$ with $q_{n+1} \equiv 0$ are either of the form $(0,u_n,*)$ with $q_n(u_n) \equiv 0$ or $(*,u_n,y_1)$ with $y_1$ determined.  The recurrence follows.\end{proof}

We can now calculate the $r=1$ term in case $V'$ is split, even dimensional by induction on $n$.  Define
\[S_{n}= \sum_{v \in U_{n}(\mathcal{O}/p), q(v) \equiv 0 \pmod{p}}{\psi((\eta,v)/p)}.\]
\begin{lemma} Suppose $\eta = (1,0,\ldots, 0,1)$.  Then $S_{n+1} = - q^n$.\end{lemma}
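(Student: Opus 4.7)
The plan is to compute $S_{n+1}$ as a quadratic Gauss sum via the classical ``complete the square'' trick. First I would rewrite the constraint $q(v) \equiv 0 \pmod{p}$ Fourier-analytically using
\[
\mathbf{1}[q(v) \equiv 0] \;=\; \frac{1}{q}\sum_{a \in \mathcal{O}/p} \psi\!\left(\frac{a\,q(v)}{p}\right),
\]
so that
\[
S_{n+1} \;=\; \frac{1}{q}\sum_{a \in \mathcal{O}/p}\sum_{v \in U_{n+1}(\mathcal{O}/p)} \psi\!\left(\frac{a\,q(v)+(\eta,v)}{p}\right).
\]
The $a=0$ term vanishes immediately because $\eta \not\equiv 0$ makes $v\mapsto \psi((\eta,v)/p)$ a nontrivial additive character on $U_{n+1}(\mathcal{O}/p)$.

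For each $a \in (\mathcal{O}/p)^{\times}$ I would next complete the square in $v$ using the identity
\[
a\,q(v)+(\eta,v) \;=\; a\,q(v+a^{-1}\eta) - a^{-1}q(\eta),
\]
which follows from $q(v+c\eta)=q(v)+c(\eta,v)+c^{2}q(\eta)$ on setting $c=a^{-1}$; note that this choice of $c$ (rather than $a^{-1}/2$) avoids division by $2$, so the argument is characteristic-free. Translating $v \mapsto w = v + a^{-1}\eta$ reduces the inner sum to $\psi(-a^{-1}q(\eta)/p)$ times the pure Gauss sum $\sum_{w} \psi(a\,q(w)/p)$, which factors over the $n+1$ hyperbolic pairs of $U_{n+1}$. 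For each pair $(x_i,y_i)$ one has $\sum_{x_i,y_i \in \mathcal{O}/p} \psi(a\,x_i y_i/p) = q$ (the inner sum over $y_i$ vanishes except when $x_i = 0$), so the full Gauss sum equals $q^{n+1}$.

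Assembling the pieces, I would use that $\eta=(1,0,\ldots,0,1)$ is normalized so that $q(\eta) = 1$, consistent with the unramifiedness hypothesis running through the subsection, together with the fact that $a\mapsto -a^{-1}$ permutes $(\mathcal{O}/p)^{\times}$, to obtain
\[
\sum_{a \neq 0} \psi\!\left(-\frac{a^{-1}}{p}\right) \;=\; \sum_{b \neq 0} \psi(b/p) \;=\; -1.
\]
Combining the three factors yields $S_{n+1} = q^{-1}\cdot q^{n+1}\cdot(-1) = -q^{n}$, as asserted. The calculation is essentially routine and I do not foresee a real obstacle; the only point to watch is interpreting the coordinates of $\eta=(1,0,\ldots,0,1)$ so that $q(\eta) \in \mathcal{O}^{\times}$, because if $\eta$ were isotropic modulo $p$ then the inner sum $\sum_{a\neq 0}\psi(-a^{-1}q(\eta)/p)$ would become $q-1$ instead of $-1$ and the answer would change accordingly.
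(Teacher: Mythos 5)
Your proof is correct, and it takes a genuinely different route from the paper's. The paper first reduces $S_{n+1}$ to the complete exponential sum $-\sum_{v \in U_n(\mathcal{O}/p)}\psi(-q(v)/p)$ by a case analysis on whether the first coordinate of the distinguished hyperbolic pair vanishes, and then evaluates that sum by relating it to the point count $C(n)$ of the quadric $q\equiv 0$, deriving the recursion $S_{n+1}=qS_n$ from the recursion $C(n+1)=qC(n)+(q-1)q^{2n}$ and finishing with the base case $S_1=-1$. You instead detect the condition $q(v)\equiv 0$ by averaging additive characters, complete the square with the characteristic-free substitution $c=a^{-1}$, factor the resulting pure Gauss sum over the $n+1$ hyperbolic pairs to get $q^{n+1}$, and evaluate the Ramanujan-type sum $\sum_{a\neq 0}\psi(-a^{-1}q(\eta)/p)=-1$. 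Your computation is closed-form and non-inductive, dispenses with the auxiliary counting Lemma \ref{lem:Cn}, and makes visible exactly where the unramifiedness hypothesis $q(\eta)\in\mathcal{O}^\times$ enters (if $q(\eta)\equiv 0$ the inner sum would be $q-1$ and the answer would become $(q-1)q^n$). Your caveat about reading the coordinates of $\eta=(1,0,\ldots,0,1)$ is well taken: the paper's own proof writes $v=(x_1,v',y_1)$ with the first and last coordinates forming a hyperbolic pair, so the intended $\eta$ indeed has $q(\eta)=1$, consistent with the standing unramifiedness assumption; with that reading your argument goes through without change.
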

\begin{proof} First we claim that
\begin{equation}\label{eqn:Sn1} S_{n+1} = - \sum_{v \in U_{n}(\mathcal{O}/p)}{\psi(-q(v)/p)}.\end{equation}
To see this, note that if $v \in U_{n+1}(\mathcal{O}/p)$ with $q(v) \equiv 0$, then either $v = (0,v',*)$ with $q'(v') \equiv 0$ or $v = (x_1,v',-x_1^{-1}q(v'))$ with $x_1 \in \mathcal{O}^\times$.  Summing over the first set of $v$'s gives $0$, because $\sum_{y_1 \in \mathcal{O}/p}{\psi(y_1/p)} = 0$.  Summing over the second set of $v$'s gives
\[\sum_{x_1 \in (\mathcal{O}/p)^\times, v' \in U_{n}}{\psi((x_1-x_1^{-1}q(v'))/p)}.\]
But note that 
\[\sum_{v'\in U_n(\mathcal{O}/p)}{\psi(q(v')/p)} = \sum_{v' \in U_n(\mathcal{O}/p)}{\psi(\alpha q(v')/p)}\]
for any $\alpha \in \mathcal{O}^\times$, because the split quadratic form $q$ takes all values. Thus
\[S_{n+1} = \left(\sum_{x_1 \in (\mathcal{O}/p)^\times}{\psi(x_1/p)}\right)\left(\sum_{v'\in U_n(\mathcal{O}/p)}{\psi(-q(v')/p)}\right)\]
which gives \eqref{eqn:Sn1}.

From \eqref{eqn:Sn1}, one can calculate $S_{n+1}$ in terms of $C(n)$, by breaking the sum up into those $v$ with $q(v) \equiv 0$ and those $v$ with $q(v) \not\equiv 0$.  One obtains
\[S_{n+1} = C(n) + (-1)\left(\frac{q^{2n}-C(n)}{q-1}\right) = -\frac{q}{q-1}(C(n)-q^{2n-1}).\]
But now by Lemma \ref{lem:Cn} one obtains
\[\frac{C(n+1)-q^{2n+1}}{q-1} = q \left(\frac{C(n)-q^{2n-1}}{q-1}\right)\]
so that $S_{n+1} = q S_n$.  The lemma follows.\end{proof}

Putting everything together, we arrive at the following proposition.
\begin{proposition}\label{prop:Jfte} Suppose $\dim(V)$ is even, $\eta$ is unramified, $V'(\mathcal{O}/p)$ is a non-degenerate quadratic space, and $\Phi_p$ is unramified.  Then 
\[J(s,\eta,\Phi_p) = 1 - |p|^{s-\frac{\dim V'}{2} + 1} = \frac{1}{\zeta_p(s-\dim(V')/2 + 1)}.\]
\end{proposition}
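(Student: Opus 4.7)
The plan starts from the slicing
\[J(s,\eta,\Phi_p) = \sum_{r \geq 0}|p|^{rs}\int_{p^{-r} V'(\mathcal{O})}\psi((\eta,x))\,\charf(p^r q'(x) \in \mathcal{O})\,dx\]
already recorded above. The preceding vanishing lemma kills every term with $r \geq 2$, so the task reduces to computing the contributions from $r=0$ and $r=1$.

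The $r=0$ term is immediate: because $\eta \in V'(\mathcal{O})$, the pairing $(\eta,x)$ lies in $\mathcal{O}$ for all $x \in V'(\mathcal{O})$, hence $\psi((\eta,x))\equiv 1$ on the domain of integration and the integral equals $\mathrm{vol}(V'(\mathcal{O}))=1$ under the chosen Haar measure.

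For the $r=1$ term I would change variables $x = p^{-1}y$ with $y \in V'(\mathcal{O})$. This converts $dx$ to $|p|^{-\dim V'}dy$, the support condition $pq'(x)\in\mathcal{O}$ into $q'(y)\in p\mathcal{O}$, i.e.\ $q'(\bar y)\equiv 0$ in $V'(\mathcal{O}/p)$, and the character into $\psi((\eta,y)/p)$. The integrand depends on $y$ only through $\bar y$, so grouping into residue classes of volume $|p|^{\dim V'}$ collapses the $r=1$ contribution to the finite exponential sum $S_n$ (with $\dim V' = 2n$) defined earlier.

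To invoke the evaluation $S_{n+1}=-q^n$, I would first observe that $S_n$ is invariant under the action of $O(V')(\mathcal{O}/p)$ on $\eta$ (both the null cone and the pairing are preserved by a compensating change of variables in $v$) and under scaling $\eta$ by a unit (compensated by scaling $v$, which preserves the null cone because $q'$ is homogeneous of degree two). Since $V'(\mathcal{O}/p)$ is a non-degenerate split quadratic space and $q'(\eta)\in\mathcal{O}^\times$, Witt's theorem allows me to replace $\eta$ by the standard vector of the lemma. That lemma then gives $S_n = -q^{n-1} = -q^{\dim V'/2-1}$, so the $r=1$ contribution equals $|p|^s(-q^{\dim V'/2-1}) = -|p|^{s-\dim V'/2+1}$, and summing with the $r=0$ term produces $1 - |p|^{s-\dim V'/2+1} = 1/\zeta_p(s-\dim V'/2+1)$ as required. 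The step demanding the most care is this $r=1$ reduction, specifically the bookkeeping of the $|p|^{\dim V'}$ Jacobian against the measure of a residue class and the Witt-theoretic normalization of $\eta$; the genuinely substantive content has already been packaged into the vanishing lemma and the explicit evaluation of $S_{n+1}$.
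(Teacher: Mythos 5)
Your proof follows the paper's own route essentially verbatim: the slicing over $\val(t)=r$, the vanishing of the $r\geq 2$ terms, and the evaluation of $S_{n+1}$ are exactly the three preceding lemmas of the paper, and the paper's ``proof'' of the proposition is precisely the assembly you carry out. Your $r=0$ computation and the $r=1$ bookkeeping (Jacobian $|p|^{-\dim V'}$ against residue classes of volume $|p|^{\dim V'}$, yielding the bare sum $S_{\dim V'/2}$) are correct, as is the final arithmetic $1+|p|^{s}\cdot(-q^{\dim V'/2-1})=1-|p|^{s-\dim V'/2+1}$.

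There is, however, a genuine gap in the reduction of a general unramified $\eta$ to the standard vector $(1,0,\ldots,0,1)$. Witt's theorem makes $O(V')(\mathcal{O}/p)$ transitive only on vectors of a \emph{fixed} nonzero value of $q'$, and scaling $\eta$ by a unit $\lambda$ (your other invariance) multiplies $q'(\eta)$ by $\lambda^2$. So the group you are using reaches only those $\eta$ with $q'(\eta)$ lying in the square class of $1$ in $(\mathcal{O}/p)^\times$; when $q'(\eta)$ is a nonsquare unit modulo $p$ -- half of the unramified $\eta$ for odd residue characteristic -- the step ``replace $\eta$ by the standard vector'' fails. The repair is cheap, and you should make it explicit: either rerun the proof of the $S_{n+1}$ lemma with $\eta=(1,0,\ldots,0,c)$ for an arbitrary unit $c$ (the only property of $\eta$ used there is that $\sum_{v'}\psi(\alpha q(v')/p)$ is independent of $\alpha\in\mathcal{O}^\times$, which holds because the split form takes every value equally often, so the answer $-q^n$ is unchanged), or replace your invariance group by the similitude group $GO(V')(\mathcal{O}/p)$: a similitude $g$ with factor $\lambda$ preserves the null cone and satisfies $S(g\eta)=S(\lambda\eta)=S(\eta)$ while sending $q'(\eta)$ to $\lambda q'(\eta)$, which together with Witt's theorem does reach every anisotropic value. (The paper itself leaves this reduction implicit, so you are in good company, but as written your argument does not cover the nonsquare class.)
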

Consequently, if $\ell$ is even and $4$ divides $\dim(V')$ then the product of the unramified factors at $s=\ell+1$ gives $\pi^{-(\ell+2 - \dim(V')/2)}$ times a rational number.

We also must understand what happens at the bad finite places:
\begin{lemma} Suppose that $\Phi_p$ is $\Q$-valued, and $f(g,\Phi_p,s) = \int_{\GL_1(\Q_p)}{|t|^s\Phi_p(t(0,0,1)g)\,dt}$.  Then 
\[\int_{V'(\Q_p)}{f(w_{\ell}n(x)g,\Phi_p,s)\psi((\eta,x))\,dx}\]
is finite and valued in $\Q(\psi_p)$ at $s= n$ a postive integer.\end{lemma}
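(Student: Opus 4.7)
The plan is to reduce the integral to a finite sum over a compact subset of $\GL_1(\Q_p) \times V'(\Q_p)$, from which both finiteness and $\Q(\psi_p)$-valuedness will follow by inspection.

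First, I would compute the vector $(0,0,1) w_{\ell} n(x) g$ explicitly. Using that $w$ exchanges $e$ and $f$ and that $n(-x) f = f + x - \tfrac{(x,x)}{2} e$, one obtains
\[
(0,0,1) w_{\ell} n(x) g = g^{-1}\bigl(f + x - \tfrac{(x,x)}{2} e\bigr).
\]
In the region of absolute convergence, swapping the order of integration gives
\[
I(s) = \int_{\GL_1(\Q_p)} |t|^s \left(\int_{V'(\Q_p)} \Phi_p\bigl(t g^{-1}(f + x - \tfrac{(x,x)}{2} e)\bigr) \psi((\eta, x))\, dx\right) dt,
\]
and since $\Phi_p$ is a $\Q$-valued Schwartz--Bruhat function, one may reduce to the case $\Phi_p = \mathbf{1}_{v_0 + L}$ for a lattice $L \subseteq V(\Q_p)$ and some $v_0 \in V(\Q_p)$.

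Next I would analyze the support conditions imposed by the lattice $L$. Projecting $g^{-1} L$ to each of the components $\Q_p e$, $V'(\Q_p)$, $\Q_p f$ gives compact subsets. The $f$-component constraint forces $|t|$ into a bounded subset of $\Q_p$, giving an upper bound $|t| \leq C_1$. For fixed such $t$, the $V'$- and $e$-component constraints restrict $x$ to a compact open ball $B_t \subseteq V'(\Q_p)$ of radius $\sim 1/|t|$. Crucially, since $\eta \neq 0$, the character $x \mapsto \psi((\eta, x))$ has finite conductor on $V'(\Q_p)$; its integral over a ball vanishes once the ball is larger than this conductor. This forces a \emph{lower} bound $|t| \geq c_\eta > 0$ on the $t$ for which the inner integral is nonzero. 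Thus the outer integration is effectively over a compact subset of $\Q_p^\times$, on which $v(t)$ takes only finitely many values.

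On each such stratum in $t$, the inner integral over $x$ is a finite sum of terms of the form $(\text{rational volume}) \cdot \psi((\eta, v))$ with $v \in V'(\Q_p)$, hence lies in $\Q(\psi_p)$; and at $s = n$ a positive integer the factor $|t|^s = q^{-n v(t)}$ is rational. Summing the finitely many contributions yields a $\Q(\psi_p)$-valued quantity, proving both finiteness and rationality. The main technical obstacle is handling general $g \in G(\Q_p)$: then $g^{-1} L$ need not respect the decomposition $V = \Q_p e \oplus V' \oplus \Q_p f$, so the support conditions couple the three components in a more intricate way. Nevertheless $g^{-1} L$ remains a lattice in $V(\Q_p)$, and the same compactness/oscillation argument bounding $|t|$ from above and below goes through, as only the finite-covolume and projection-compactness properties are used.
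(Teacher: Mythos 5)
Your overall strategy---reduce to a compact region in $(t,x)$ and read off rationality term by term---has the right shape, and the endgame (rational volumes times values of $\psi$, and $|t|^s=q^{-n\,v(t)}$ rational at $s=n$) is fine. But the step that does all the work, the lower bound $|t|\ge c_\eta$, is not established by the mechanism you invoke. For fixed $t$ the support conditions do \emph{not} confine $x$ to a ball: the $V'$-component gives the ball $\{\|x\|\le C/|t|\}$, but the remaining coordinate of $(0,0,1)w n(x)$ is $-q'(x)$ (up to the action of $g$), so the other constraint is $|q'(x)|\le C'/|t|$, a condition cut out by the quadratic form, whose zero locus is unbounded. The resulting support set is a union of \emph{many} cosets of a fixed lattice $L'$ selected by the congruence $q'(x)\in t^{-1}\mathcal{O}$, and the integral of $\psi((\eta,\cdot))$ over it is a genuine exponential sum attached to the quadric mod $p^{r}$ (with $r=-v(t)$), not the integral of a nontrivial character over a single large lattice coset. ``The ball exceeds the conductor, hence the integral vanishes'' does not apply. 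Indeed, in the unramified computation of subsection \ref{subsec:ftePart} the $r=1$ instance of exactly this sum is computed to be nonzero ($S_{n+1}=-q^{n}$) even though $\psi((\eta,\cdot))$ is already nontrivial on $p^{-1}L'$; and the vanishing for $r\ge 2$ there is proved by a perturbation argument along hyperplane sections of the quadric, which moreover uses $q'(\eta)\in\mathcal{O}^\times$. Without the lower bound on $|t|$, both your interchange of integrals and the finiteness at $s=n$ (which need not lie in the region of absolute convergence) are unsupported.

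The paper closes precisely this gap by other means: after replacing $\Phi_p$ by $\Phi_p^g$ to reduce to $g=1$, it quotes Karel's stabilization theorem (\cite[Theorem 3.6]{karelAJM}) to truncate the integral over $V'(\Q_p)$ to a fixed compact set $U$, then uses right-invariance of $f(\cdot,\Phi_p,s)$ under a compact open subgroup to turn the integral over $U$ into a finite sum of values $f(g_i,\Phi_p,s)=P_i(q^{-s})(1-q^{-s})^{-1}$ with $P_i$ a $\Q$-coefficient polynomial, which is finite and rational at $s=n>0$. If you wish to avoid citing Karel, you must prove the stabilization of the $x$-integral yourself, e.g.\ by a perturbation or Gauss-sum analysis of the quadric as in the unramified lemma of subsection \ref{subsec:ftePart}; that is the genuinely missing ingredient in your argument.
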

\begin{proof} First, changing $\Phi_p$ to $\Phi_p^g$, $\Phi_p^g(v) = \Phi_p(vg)$, one can assume that $g=1$.  Now, from \cite[Theorem 3.6]{karelAJM}, one obtains that there is a compact set $U$ of $V'(\Q_p)$ so that 
\begin{equation}\label{eqn:V'int} \int_{V'(\Q_p)}{f(w_{\ell}n(x),\Phi_p,s)\psi((\eta,x))\,dx} = \int_{U}{f(w_{\ell}n(x),\Phi_p,s)\psi((\eta,x))\,dx}.\end{equation}
But now, because $\Phi_p$ is Schwartz, $f(\cdot,\Phi_p,s)$ is right invariant under a compact open, so that the right-hand side of \eqref{eqn:V'int} is a finite sum.  The lemma follows because $\Phi_p$ being $R$-valued (for some ring $R$) implies $f(g,\Phi_p,s)$ is equal to $P(q^{-s})(1-q^{-s})^{-1}$ for some $R$-valued polynomial $P(X)$. \end{proof}

\subsection{Rank two Fourier coefficients: archimedean part}\label{subsec:arch} In this subsection, we calculate the archimedean contribution to the rank two Fourier coefficients of the Eisenstein series $E_{\ell}(g,\Phi_f,s)$.  More precisely, let $f_{\ell}(g,s)$ denote the $\Vm_{\ell}$-valued section of the Eisenstein series on $G$.  The main result of this subsection is the computation of the Fourier transform
\begin{equation}\label{eqn:FTint}I(\omega,\ell) = \int_{V'(\R)}{e^{-i(\omega,x)}f_\ell(wn(x),s=\ell+1)\,dx}.\end{equation}

For the involution $\iota$ on $V$ that gives rise to the Cartan involution, write $||v||^2 := (v,\iota(v))$ for the associated positive-definite norm.  Before beginning this computation, let us note that $f_{\ell}(wn(x),s)$ is the function 
\[x \mapsto \frac{\left(p_{V_3}((1,x,-q(x))\right)^{\ell}}{||(1,x,-q(x)||^{s+\ell}}.\]

Following \cite[(2.8.1)]{kOIII} define
\[\tau(x_n,x_2)^2 = \left(1 + \frac{||x_n||^2-||x_2||^2}{4}\right)^2 + ||x_2||^2.\]
One has the following simple lemma.  Denote by $p_{V_3}: V = V_3 \oplus V_{n+1} \rightarrow V_{3}$ the orthogonal projection.
\begin{lemma}\label{lem:indsec} Suppose $w \in V'(\R)$ and $w = w_2+w_n$ is the decomposition of $w$ into $V_2 \oplus V_n$, and $v = (1,w,-q'(w))$ so that $v$ is isotropic.  Then 
\[ p_{V_3}(v) = -\frac{1}{2\sqrt{2}}\left((\sqrt{2}w_2,iv_1+v_2) x^2 + (||w_2||^2-||w_n||^2-2)xy + (\sqrt{2} w_2,iv_1-v_2) y^2\right)\]
and
\[||v||^2 = \tau(\sqrt{2} w_n, \sqrt{2} w_2)^2.\]
\end{lemma}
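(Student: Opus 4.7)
The proof of both parts is a direct unwinding of the definitions, using the orthogonal decomposition $V = V_3 \oplus V_{n+1}$ with $V_3 = V_2 \oplus \R u_+$ and $V_{n+1} = V_n \oplus \R u_-$. The plan is to first rewrite $v = e + w_2 + w_n - q'(w) f$ in the $(u_+, u_-, w_2, w_n)$ basis by substituting $e = (u_++u_-)/2$ and $f = (u_+-u_-)/2$, which gives
\[
v \;=\; \frac{1-q'(w)}{2}\, u_+ \;+\; \frac{1+q'(w)}{2}\, u_- \;+\; w_2 + w_n,
\]
so that $p_{V_3}(v) = \tfrac{1-q'(w)}{2}\, u_+ + w_2$ and the $V_{n+1}$-component is $\tfrac{1+q'(w)}{2}\, u_- + w_n$.

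For part (1), I will use the identifications $xy = u_+/\sqrt 2$, $x^2 = iv_1 - v_2$, $y^2 = iv_1 + v_2$ from section \ref{sec:notation}. The coefficient of $xy$ in $p_{V_3}(v)$ is $\sqrt 2\,(1-q'(w))/2$, and substituting $q'(w) = q_2(w_2) - q_n(w_n) = \tfrac12(\|w_2\|^2 - \|w_n\|^2)$ rewrites this as $-\tfrac{1}{2\sqrt 2}(\|w_2\|^2 - \|w_n\|^2 - 2)$, matching the asserted $xy$-coefficient. To handle the $V_2$ part, I will write $w_2 = \alpha v_1 + \beta v_2$, invert the relations $x^2 = iv_1-v_2$, $y^2 = iv_1+v_2$ to obtain $w_2 = -\tfrac12(i\alpha+\beta)\,x^2 - \tfrac12(i\alpha-\beta)\,y^2$, and observe that $(w_2, iv_1 \pm v_2) = i\alpha \pm \beta$; this recasts the coefficients in precisely the form $-\tfrac{1}{2\sqrt 2}(\sqrt 2\, w_2, iv_1 \pm v_2)$ claimed in the statement.

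For part (2), I will compute $\|v\|^2 = (v, \iota(v))$ directly. Since $\iota(v) = -q'(w)\, e + w_2 - w_n + f$, the bilinear pairing (using $(e,f) = 1$ and additivity across the decomposition $V = \Q e \oplus V' \oplus \Q f$) gives
\[
\|v\|^2 \;=\; 1 + q'(w)^2 + (w_2, w_2)' - (w_n, w_n)'.
\]
Since $\iota$ fixes $V_2$ and negates $V_n$, one has $(w_2, w_2)' = \|w_2\|^2$ and $(w_n, w_n)' = -\|w_n\|^2$, so $\|v\|^2 = 1 + q'(w)^2 + \|w_2\|^2 + \|w_n\|^2$. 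Substituting $q'(w) = \tfrac12(\|w_2\|^2 - \|w_n\|^2)$ and expanding the target $\tau(\sqrt 2\, w_n, \sqrt 2\, w_2)^2 = \bigl(1 + \tfrac{\|w_n\|^2 - \|w_2\|^2}{2}\bigr)^2 + 2\|w_2\|^2$ in the letters $A = \|w_2\|^2$, $B = \|w_n\|^2$, both expressions collapse to $1 + A + B + (A-B)^2/4$.

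The argument is entirely mechanical; there is no real obstacle. The only mildly fiddly point is the sign flip between $q_n$ and $q'$ on $V_n$, i.e.\ $\|w_n\|^2 = 2 q_n(w_n) = -2 q'(w_n) = -(w_n, w_n)'$, which enters both the $xy$-coefficient computation in part (1) (via $q'(w) = \tfrac12(\|w_2\|^2 - \|w_n\|^2)$) and the $(w_n,w_n)'$ contribution in part (2).
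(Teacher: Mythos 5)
Your proposal is correct and follows essentially the same route as the paper's proof: both decompose $v$ as $\tfrac{1-q'(w)}{2}u_+ + \tfrac{1+q'(w)}{2}u_- + w_2 + w_n$, read off the $V_3$-component via the identifications $x^2 = iv_1-v_2$, $xy = u_+/\sqrt2$, $y^2 = iv_1+v_2$, and compute $\|v\|^2 = 1 + q'(w)^2 + \|w_2\|^2 + \|w_n\|^2$ before matching it with $\tau(\sqrt2 w_n,\sqrt2 w_2)^2$. The only cosmetic difference is that the paper completes the square directly while you expand both sides; your sign bookkeeping for $q_n$ versus $q'$ on $V_n$ is handled correctly.
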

\begin{proof} 
With notation as above, we have
\[v = \left(w_2 + \frac{1}{2}(1-q'(w))(e+f)\right) + \left(w_n + \frac{1}{2}(1+q'(w))(e-f)\right).\]
Thus
\begin{align*}
p_{V_3}(v) &= w_2 + \frac{1}{2}(1-q'(w))(e+f) \\ &= -\frac{1}{2}\left((w_2,iv_1+v_2) x^2 + \sqrt{2}(q_2(w_2)-q_n(w_n)-1)xy + (w_2,iv_1-v_2) y^2\right) \\ &= -\frac{1}{2\sqrt{2}}\left((\sqrt{2}w_2,iv_1+v_2) x^2 + (||w_2||^2-||w_n||^2-2)xy + (\sqrt{2} w_2,iv_1-v_2) y^2\right)\end{align*}
as claimed.

One computes
\begin{align*}
||(1,w,-q_{V'}(w))||^2 &= ((1,w,-q_{V'}(w)),(-q_{V'}(w),\iota(w),1)) \\ &= 1 + (q_{V'}(w))^2 + ||w_2||^2 + ||w_n||^2 \\ &= 1 + \left(\frac{||w_2||^2-||w_n||^2}{2}\right)^2 + ||w_2||^2 + ||w_n||^2 \\ &= \left(\frac{||w_2||^2-||w_n||^2}{2}-1\right)^2 + 2 ||w_2||^2 \\ &= \tau(\sqrt{2} w_n, \sqrt{2} w_2)^2.\end{align*}
This gives the lemma.\end{proof}

To compute \eqref{eqn:FTint}, we start with the answer, and compute the inverse Fourier transform.  This strategy is only possible because the unipotent group $N$ is abelian, and this is why modular forms on $G$ are much easier than modular forms on the quaternionic exceptional groups.

Thus we wish to compute
\begin{equation}\label{eqn:Ivx} I_v(x;\ell)=\int_{V'(\R)}{e^{i(\omega,x)}\charf(q(\omega) > 0) q(\omega)^{A} \left(-\frac{|(\omega,v_1+iv_2)|}{(\omega,v_1+iv_2)}\right)^{v} K_v(\sqrt{2}|(\omega,v_1+iv_2)|)\,d\omega}.\end{equation}
Eventually, we will substitute $A=\ell-n/2$.  The computations are inspired by, and use results from \cite{kOIII} and \cite{kobayashiMano}.  Compare also \cite{shimura}.

Let us first explain that the integral $I_v(x;\ell)$ is absolutely convergent if $A = \ell - n/2 \geq 0$ and $n \geq 1$. 
\begin{lemma} Suppose $A = \ell-n/2 \geq 0$ and $n \geq 1$.  Then the integral $I_v(x;\ell)$ is absolutely convergent.\end{lemma}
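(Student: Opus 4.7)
The plan is to reduce the $V'(\R)$-integral to an elementary one-dimensional integral by using the structure of $q$ and of $(\omega,v_1+iv_2)$, and then to bound it using the standard asymptotics of the $K$-Bessel function.

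\textbf{Step 1: Pointwise bounds on the integrand.} The factor $e^{i(\omega,x)}$ has absolute value $1$, and so does $\bigl(-|(\omega,v_1+iv_2)|/(\omega,v_1+iv_2)\bigr)^{v}$. Thus absolute convergence of $I_v(x;\ell)$ is equivalent to absolute convergence of
\[
J := \int_{V'(\R)}\charf(q(\omega)>0)\, q(\omega)^{A}\, K_{v}\!\bigl(\sqrt{2}\,|(\omega,v_1+iv_2)|\bigr)\,d\omega.
\]
Note $K_v=K_{|v|}\geq 0$, so this really is an integral of a nonnegative function.

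\textbf{Step 2: Change of variables.} Decompose $\omega=\omega_2+\omega_n$ with $\omega_2\in V_2(\R)$, $\omega_n\in V_n(\R)$. Since $v_1,v_2\in V_2$, one has $|(\omega,v_1+iv_2)|=\|\omega_2\|$, and $q(\omega)=\|\omega_2\|^2-\|\omega_n\|^2$. Passing to polar coordinates $r=\|\omega_2\|$ on $V_2(\R)=\R^{2}$ and $s=\|\omega_n\|$ on $V_n(\R)=\R^{n}$, and then substituting $s=ru$ on the region $\{q(\omega)>0\}=\{r>s\}$, the integral $J$ factors as
\[
J = C \int_{0}^{\infty} r^{\,2A+n+1}\,K_{v}(\sqrt{2}\,r)\,dr \;\cdot\; \int_{0}^{1}(1-u^{2})^{A}\,u^{n-1}\,du
\]
for a positive constant $C$ coming from the angular measures on $V_2(\R)$ and $V_n(\R)$. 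The $u$-integral is a Beta integral, finite whenever $A>-1$ and $n\geq 1$, both of which hold by hypothesis.

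\textbf{Step 3: Convergence of the radial integral.} It remains to check that $\int_{0}^{\infty} r^{\,2A+n+1}K_{v}(\sqrt 2\, r)\,dr$ converges. For $r\to\infty$, $K_v(\sqrt{2}r)$ decays exponentially, so there is no issue at infinity. For $r\to 0^+$, the standard asymptotics give $K_v(r)\sim \tfrac{1}{2}\Gamma(|v|)(r/2)^{-|v|}$ if $v\neq 0$ and $K_0(r)\sim -\log r$, so convergence at the origin requires $2A+n+1-|v|>-1$, i.e.\ $2A+n+2>|v|$. Since $A=\ell-n/2$, this inequality reads $2\ell+2>|v|$, which is trivially true because $|v|\leq\ell$. (Equivalently, the integral can be evaluated in closed form in terms of Gamma functions, and that formula is valid precisely in the range $2A+n+2>|v|$.)

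\textbf{Main obstacle.} There is no serious obstacle here: the proof is essentially a bookkeeping exercise once one notices that $|(\omega,v_1+iv_2)|$ depends only on $\omega_2$, which is what decouples the radial and angular pieces. The only point worth verifying is the behavior at $r\to 0^+$ where $K_v$ has a mild singularity, and one checks directly that the hypothesis $A\geq 0$ (together with $n\geq 1$ and $|v|\leq\ell$) is amply sufficient.
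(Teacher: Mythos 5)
Your proof is correct and follows essentially the same route as the paper: take absolute values, pass to polar coordinates in $V_2$ and $V_n$, substitute $\|\omega_n\|=u\|\omega_2\|$ to split off a finite Beta-type integral (using $A\geq 0$, $n\geq 1$), and reduce to a one-dimensional $K$-Bessel integral whose convergence at the origin follows from $2A+n+1=2\ell+1$ and $|v|\leq\ell$. The only difference is cosmetic: you invoke the explicit small-argument asymptotics of $K_v$, while the paper just notes that $t_2^{2\ell+1}K_v(\sqrt{2}t_2)$ vanishes at $t_2=0$.
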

\begin{proof} Taking absolute values, one obtains
\begin{align*} \int_{V'(\R)} \charf(q(w) > 0) q(\omega)^{A} & K_v(\sqrt{2}|(\omega,v_1 + iv_2)|)\,d\omega \\ &= C \int_{t_2, t_n}{\charf(t_2 > t_n) (t_2^2 - t_n^2)^{A} K_v(\sqrt{2} t_2) t_2 t_n^{n-1} \,dt_2 \,dt_n} \\ &= C \int_{t_2 = 0}^{\infty}\int_{0 \leq w \leq 1}{t_2^{2A+n+1}(1-w^2)^{A} K_v(\sqrt{2} t_2) w^{n-1} \,dw\,dt_2} \\ &= C' \int_{0}^{\infty}{t_2^{2A+n+1} K_v(\sqrt{2} t_2)\,dt_2}\end{align*}
for positive constants $C, C'$.  Here we have made the variable change $t_n = w t_2$, and because $A \geq 0$ and $n \geq 1$ the integral over $w$ is finite.  Because $A = \ell - n/2$, $2A + n + 1 = 2\ell+1$.  Thus because $|v| \leq 2\ell$, $t_2^{2\ell+1} K_v(\sqrt{2} t_2)$ is $0$ at $t_2=0$ so the integral over $t_2$ in the final line above is finite.
\end{proof}
As the integral defining $I_v(x;\ell)$ is absolutely convergent, we may apply the Fourier inversion theorem, as mentioned above.

The computation of $I_v(x;\ell)$ is given in the following proposition.  We will assume $v \geq 0$ in this proposition.  Because $K_{-v}(y) = K_v(y)$, we can obtain the case $I_{v}(x;\ell)$ for $v < 0$ by the case of $v > 0$ by exchanging $v_2$ with its negative. Recall the Gauss hypergeometric function $\,_2F_1(a,b;c;z)$ and Appell's hypergeometric function $F_4(a,b;c;d;x;y)$.

\begin{proposition}\label{prop:FTint1} Suppose $v \geq 0$.  One has
\begin{align*} I_v(x;\ell) &= (2\pi)^{(n+2)/2} 2^{\ell-(n+v+2)/2} (-x_2,iv_1+v_2)^{v} \frac{\Gamma(\ell+v+1)\Gamma(\ell-n/2+1)}{\Gamma(v+1)} \\ &\times  F_4(\ell+1,\ell+1+v;\ell+1;v+1;-||x_n||^2/2;-||x_2||^2/2).\end{align*}
\end{proposition}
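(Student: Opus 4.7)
The plan is to compute $I_v(x;\ell)$ by iterated integration---first over $V_n$, then in polar coordinates on $V_2$---which reduces the problem to a one-dimensional Bessel integral whose power-series expansion produces Appell's $F_4$ directly. Split $\omega = \omega_2+\omega_n$ and $x=x_2+x_n$, so that $q(\omega)^A \charf(q(\omega)>0) = (q_2(\omega_2)-q_n(\omega_n))^{A}\charf(q_n(\omega_n)<q_2(\omega_2))$ puts the inner $V_n$-integration in standard Bochner form. Passing to spherical coordinates on $V_n$ and applying Sonine's formula
\[
\int_0^1 s^{\nu+1}(1-s^2)^{\mu} J_\nu(sz)\,ds = 2^\mu \Gamma(\mu+1) z^{-\mu-1} J_{\nu+\mu+1}(z)
\]
with $\nu = n/2-1$ and $\mu = A = \ell-n/2$ (so $\nu+\mu+1=\ell$), the inner integral collapses to
\[
(2\pi)^{n/2}\,2^{\ell-n/2}\,\Gamma(\ell-n/2+1)\,|\omega_2|^{\ell}\,|x_n|^{-\ell}\,J_\ell(|\omega_2||x_n|).
\]

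Next I introduce polar coordinates $\omega_2 = r(\cos\theta\, v_1+\sin\theta\, v_2)$ on $V_2$ and write $x_2 = \rho_2(\cos\phi\, v_1+\sin\phi\, v_2)$. Since $(\omega_2, v_1+iv_2)=re^{i\theta}$, the angular integral against $(-e^{-i\theta})^{v} e^{ir\rho_2\cos(\theta-\phi)}$ is evaluated by the Jacobi--Anger identity $\int_0^{2\pi} e^{iz\cos\psi}e^{-iv\psi}\,d\psi = 2\pi i^{v} J_v(z)$, producing the factor $2\pi(-i)^{v} e^{-iv\phi} J_v(r\rho_2)$. A direct calculation with $x_2$ in the chosen basis shows $(-i)^{v} e^{-iv\phi} = \rho_2^{-v}(-x_2, iv_1+v_2)^{v}$, which is exactly the prefactor appearing in the claimed formula. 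The problem is now reduced to the one-dimensional integral
\[
\mathcal{I} := \int_0^\infty r^{\ell+1} J_\ell(r|x_n|) J_v(r\rho_2) K_v(\sqrt{2}r)\,dr.
\]

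To compute $\mathcal{I}$ I substitute the power-series definitions of $J_\ell$ and $J_v$, interchange sum and integral (a straightforward Fubini argument, using exponential decay of $K_v(\sqrt{2}r)$ at infinity and integrability at $0$ for $\ell\geq n/2$), and apply the Mellin transform
\[
\int_0^\infty r^s K_v(\sqrt{2}r)\,dr = 2^{(s-3)/2}\Gamma\!\left(\tfrac{s+v+1}{2}\right)\Gamma\!\left(\tfrac{s-v+1}{2}\right)
\]
at $s=2\ell+v+2m+2k+1$, so that the two Gamma values become $\Gamma(\ell+m+k+1)\Gamma(\ell+v+m+k+1)$. Pairing the factorials $(\ell+m)!$, $(v+k)!$ from the $J$-series denominators against $(\ell+1)_m$, $(v+1)_k$, and rewriting the Gamma numerators through the Pochhammer identities $\Gamma(\ell+1+m+k) = \ell!\,(\ell+1)_{m+k}$ and $\Gamma(\ell+v+1+m+k) = \Gamma(\ell+v+1)(\ell+v+1)_{m+k}$, the double sum is recognized as
\[
\frac{\Gamma(\ell+v+1)}{\Gamma(v+1)}\, F_4\!\left(\ell+1,\ell+v+1;\ell+1,v+1;-|x_n|^2/2,-|x_2|^2/2\right).
\]
Combining all three pieces and collecting constants---$(2\pi)^{n/2}\cdot 2\pi = (2\pi)^{(n+2)/2}$, and the various powers of two telescope to $2^{\ell-(n+v+2)/2}$---yields the stated identity. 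The main obstacle is purely bookkeeping: tracking signs, powers of two, and verifying the phase identification $(-i)^v e^{-iv\phi} = \rho_2^{-v}(-x_2,iv_1+v_2)^v$; convergence and the interchange of sum and integral are free in the absolutely convergent regime $\ell\geq n/2$, $n\geq 1$ established in the lemma preceding the proposition.
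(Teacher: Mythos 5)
Your proposal is correct and follows essentially the same route as the paper: decompose $V'=V_2\oplus V_n$, reduce the $V_n$-integration via spherical harmonics and Sonine's formula to a Bessel function $J_{n/2+A}=J_\ell$, handle the $V_2$-angular integral to produce $J_v$ and the phase $(-x_2,iv_1+v_2)^v/\rho_2^v$, and land on the one-dimensional integral $\int_0^\infty r^{\ell+1}J_\ell(r\lVert x_n\rVert)J_v(r\lVert x_2\rVert)K_v(\sqrt{2}r)\,dr$. The only (harmless) differences are the order of the angular/radial steps and that you derive the final $J\text{--}J\text{--}K$ integral from scratch via the Mellin transform of $K_v$ and a Pochhammer rearrangement into Appell's $F_4$, where the paper simply cites the tabulated formula \cite[6.578.2]{GR} following \cite{kOIII}; your computation agrees with that table entry.
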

\begin{proof}  From \cite[(3.3.4) page 55]{kobayashiMano} (which cites \cite[Lemma 3.6, Introduction]{helgason}), one has
\begin{equation}\label{JBes1}\int_{S^{m-1}}{e^{i  \lambda (\eta,\omega)} \phi(\omega)\,d\omega} = (2\pi)^{m/2} i^{\ell} \phi(\eta) \lambda^{1-m/2} J_{m/2-1+\ell}(\lambda)\end{equation}
if $\phi$ is Harmonic of degree $\ell$, $\lambda > 0$, and $\eta$ is in the sphere $S^{m-1}$.  Here $J_{\bullet}$ is the $J$-Bessel function.

Let $S(V_n) = \{x \in V_n: ||x||^2 =1\}$ be the sphere of radius one in $V_n$, and similarly let $S(V_2)$ be the sphere of radius one in $V_2$.  We write $\omega = t_2 \sigma_2 + t_n \sigma_n$, with $t_2, t_n \in \R_{>0}$, $\sigma_2 \in S(V_2)$ and $\sigma_n \in S(V_n)$.  Let 
\[\phi_v(\omega_2) = \left(-\frac{|(\omega,v_1+iv_2)|}{(\omega,v_1+iv_2)}\right)^{v} K_v(\sqrt{2}|(\omega,v_1+iv_2)|).\]
Define $x_2 \in V_2$ and $x_n \in V_n$ so that $x = x_2 + x_n$.  Then we compute
\begin{align*} I_{v}(x;\ell) &= \int_{t_2, t_n, \sigma_2, \sigma_n}{\charf(t_2 > t_n)(t_2^2 - t_n^2)^{A} e^{it_2 (\sigma_2, x_2)}e^{i t_n (\sigma_n, x_n)}\phi(t_2 \sigma_2) t_2 t_n^{n-1}\, dt_2\, dt_n\, d\sigma_2\, d\sigma_n}
 \\ &=  (2\pi)^{n/2} \int_{t_2,t_n,\sigma_2}\charf(t_2 > t_n)(t_2^2-t_n^2)^{A} e^{it_2(\sigma_2,x_2)}\phi(t_2\sigma_2) (t_n ||x_n||)^{1-n/2} \\ &\,\,\,\, \times J_{n/2-1}(t_n ||x_n||) t_2 t_n^{n-1}\,dt_2\,dt_n\,d\sigma_2
.\end{align*}
Now by \eqref{JBes1} we have
\[\int_{\sigma_2 \in S(V_2)}{ e^{i t_2 (\sigma_2, x_2)}\phi_v(t_2\sigma_2)\,d\sigma_2} = (2\pi) \left(\frac{|(x_2,iv_1-v_2)|}{(x_2,iv_1-v_2)}\right)^{v} J_v(||x_2|| t_2) K_v(\sqrt{2} t_2).\]

Thus
\begin{align*} I_v(x;\ell) &= (2\pi)^{(n+2)/2}  \left(\frac{|(x_2,iv_1-v_2)|}{(x_2,iv_1-v_2)}\right)^{v}||x_n||^{1-n/2} \\ &\,\,\, \times \int_{t_2, t_n}{\charf(t_2 > t_n)(t_2^2-t_n^2)^{A} t_2 t_n^{n/2} K_v(\sqrt{2} t_2) J_v(||x_2|| t_2) J_{n/2-1}(||x_n|| t_n)\,dt_n \,dt_2}.\end{align*}

We now compute the integral over $t_n$.  One has
\begin{align*} \int_{0}^{t_2}{(t_2^2-t_n^2)^{A} t_n^{n/2} J_{n/2-1}(||x_n|| t_n)\,dt_n} &= (t_2)^{2A+n/2+1}\int_{0}^{1}{(1-w^2)^{A} w^{n/2} J_{n/2-1}(t_2 ||x_n|| w)\,dw} \\ &= 2^{A}\Gamma(A+1) ||x_n||^{-(A+1)} (t_2)^{A+n/2} J_{n/2+A}(||x_n|| t_2)\end{align*}
where the last line is by \cite[6.567.1]{GR}.

Combining, we obtain
\begin{align*} I_v(x;\ell) &= (2\pi)^{(n+2)/2} \left(\frac{|(x_2,iv_1-v_2)|}{(x_2,iv_1-v_2)}\right)^{v}||x_n||^{-(A+n/2)} 2^{A}\Gamma(A+1) \\ \,\,\, &\times \int_{0}^{\infty}{t_2^{A+n/2+1} J_{n/2+A}(||x_n|| t_2) J_v(||x_2|| t_2) K_v( \sqrt{2} t_2)\,dt_2}.\end{align*}
This last integral over $t_2$ is worked out in \cite[page 586]{kOIII}.  Following \emph{loc cit}, by e.g., \cite[6.578.2]{GR}, one obtains
\begin{align*}\int_{0}^{\infty}t_2^{A+n/2+1} & J_{n/2+A}(||x_n|| t_2) J_v(||x_2|| t_2) K_v( \sqrt{2} t_2)\,dt_2 = 2^{-(1+v/2)} ||x_n||^{(n/2+A)} ||x_2||^{v} \frac{\Gamma(A+n/2+1+v)}{\Gamma(v+1)} \\ & \times F_4(A+n/2+1,A+n/2+1+v;A+n/2+1;v+1;-||x_n||^2/2,-||x_2||^2/2).\end{align*}

Set $\ell = A + n/2$, and note that
\begin{equation}\label{eqn:x2arc} ||x_2|| \left(\frac{|(x_2,iv_1-v_2)|}{(x_2,iv_1-v_2)}\right) = -(x_2,iv_1+v_2).\end{equation}
Taking \eqref{eqn:x2arc} into account, the proposition follows.\end{proof}

\begin{corollary}\label{cor:FTint1} Suppose $v \geq 0$ and $||x_2|| + ||x_n|| < \sqrt{2}$.  Then 
\begin{align*} \tau(\sqrt{2}x_n,\sqrt{2} x_2)^{2\ell+1}& I_v(x;\ell) = (2\pi)^{(n+2)/2} 2^{\ell-(n+v+2)/2} (-x_2,iv_1+v_2)^{v} \frac{\Gamma(\ell+v+1)\Gamma(\ell-n/2+1)}{\Gamma(v+1)} \\ &\times  \tau(\sqrt{2}x_n,\sqrt{2}x_2)^{\ell-v} \,_2F_1\left(\frac{v-\ell}{2}, \frac{v+\ell+1}{2};v+1; \frac{2||x_2||^2}{\tau(\sqrt{2}x_n,\sqrt{2}x_2)^2}\right).\end{align*}
\end{corollary}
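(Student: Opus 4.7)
The corollary is obtained from Proposition~\ref{prop:FTint1} by a classical reduction of Appell's $F_4$ to a single Gauss hypergeometric function $\,_2F_1$. The feature that makes this reduction possible is the parameter coincidence $\alpha=\gamma=\ell+1$ (first upper equals first lower) in
\[F_4\bigl(\ell+1,\ell+v+1;\ell+1;v+1;-b/2,-a/2\bigr),\]
where I have written $a=\|x_2\|^2$ and $b=\|x_n\|^2$.

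The plan is as follows. First, the identity $(\ell+1)_{m+n}/(\ell+1)_m=(\ell+1+m)_n$ collapses the $F_4$ double series to a single sum of $\,_2F_1$'s:
\[F_4 = \sum_{m\geq 0}\frac{(\ell+v+1)_m(-b/2)^m}{m!}\,\,_2F_1\bigl(\ell+1+m,\ell+v+1+m;\,v+1;\,-a/2\bigr).\]
Next, apply Pfaff's transformation $\,_2F_1(A,B;C;z)=(1-z)^{-B}\,_2F_1(C-A,B;C;z/(z-1))$ with $B=\ell+v+1+m$ to each inner $\,_2F_1$. This produces a factor $(1+a/2)^{-(\ell+v+1+m)}$ and converts the argument to $a/(a+2)$. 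Factoring out $(1+a/2)^{-(\ell+v+1)}$ and resumming in $m$, the combined series collapses in closed form (via a Jacobi-polynomial generating function, equivalently a known degeneration of $F_4$) to $\tau^{-(\ell+v+1)}\,\,_2F_1(v-\ell,v+\ell+1;v+1;w)$ for a specific algebraic $w=w(a,b,\tau)$. The key algebraic identity used to recognize the powers of $\tau$ is
\[\tau^2-2a=\bigl(1+(b-a)/2\bigr)^2,\]
which is immediate from $\tau^2=1+a+b+(b-a)^2/4$ in Lemma~\ref{lem:indsec}, and shows that $\sqrt{1-2a/\tau^2}$ is rational in $a,b$ and $\tau$.

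Finally, apply Kummer's quadratic transformation
\[\,_2F_1\bigl(\alpha,\beta;\,\alpha+\beta+\tfrac12;\,4w(1-w)\bigr)=\,_2F_1\bigl(2\alpha,2\beta;\,\alpha+\beta+\tfrac12;\,w\bigr)\]
with $(\alpha,\beta)=((v-\ell)/2,(v+\ell+1)/2)$, so that $\alpha+\beta+\tfrac12=v+1$, to rewrite $\,_2F_1(v-\ell,v+\ell+1;v+1;w)$ as $\,_2F_1((v-\ell)/2,(v+\ell+1)/2;v+1;2a/\tau^2)$. Combined with the $\tau^{\ell-v}$ absorbed into $\tau^{2\ell+1}$ on the left, this yields the exact form stated in the corollary.

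The main technical obstacle is the bookkeeping in the resummation step, especially verifying that the sum over $m$ matches the desired generating function after the Pfaff transformation. A more conceptual alternative that sidesteps this is to note that both sides of the claimed identity are holomorphic in $(a,b)$ throughout the convergence region $\sqrt{a}+\sqrt{b}<\sqrt{2}$, that they agree on the coordinate axes $\{a=0\}$ and $\{b=0\}$ by the same Pfaff/quadratic computation carried out in one variable, and that both satisfy the Horn PDE system for $F_4$ with the degeneration $\alpha=\gamma$; this characterization then forces equality throughout the region.
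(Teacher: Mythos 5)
The paper's own proof of this corollary is a one-line citation: the reduction of the Appell function $F_4(\ell+1,\ell+1+v;\ell+1;v+1;\cdot,\cdot)$ with coincident first upper and first lower parameters to a single Gauss function in the variable $2||x_2||^2/\tau^2$ is quoted directly from \cite[page 586, Lemma 5.7]{kOIII}, and the corollary follows by substitution into Proposition \ref{prop:FTint1}. You have instead set out to prove that reduction lemma from scratch. Your preparatory steps are all correct: the collapse of the double series via $(\ell+1)_{m+n}=(\ell+1)_m(\ell+1+m)_n$, the Pfaff transformation producing $(1+||x_2||^2/2)^{-(\ell+v+1+m)}$ and the argument $||x_2||^2/(||x_2||^2+2)$, the algebraic identity $\tau^2-2||x_2||^2=(1+(||x_n||^2-||x_2||^2)/2)^2$, and the choice of quadratic transformation with $\alpha+\beta+\tfrac12=v+1$ are each consistent with the target formula (and the one-variable check on the axis $||x_n||=0$ does go through exactly as you say).

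The gap is the resummation over $m$, which is precisely where the content of the lemma lives, and you assert it rather than prove it: the phrase ``via a Jacobi-polynomial generating function, equivalently a known degeneration of $F_4$'' is circular, since that degeneration of $F_4$ is exactly the statement being established. To close this route you would need to exhibit and verify the specific generating function for the terminating series $\,_2F_1(v-\ell-m,\ell+v+1+m;v+1;z)$ --- including the index shift coming from $m\geq 0$ versus polynomial degree $\ell-v+m$ --- and check that the resulting argument $w$ satisfies $4w(1-w)=2||x_2||^2/\tau^2$. The fallback via the Horn system is also not yet a proof: one must actually verify that the right-hand side (a $\,_2F_1$ precomposed with the algebraic functions $\tau^{-(\ell+v+1)}$ and $2||x_2||^2/\tau^2$) satisfies the rank-four $F_4$ system, and supply a uniqueness statement for solutions holomorphic at the origin (here the lower parameters $\ell+1$ and $v+1$ are positive integers, so the local analysis is resonant and needs care); agreement on the two coordinate axes is not by itself enough. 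Either route is plausibly completable, but as written the proposal reduces the corollary to an unproven two-variable identity, whereas the paper resolves it by citing a reference in which that identity is proved.
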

\begin{proof} By \cite[page 586, Lemma 5.7]{kOIII}, if $||x_n|| + ||x_2|| < \sqrt{2}$,
\begin{align*} F_4(\ell+1,\ell+1+v;\ell+1;v+1;&-||x_n||^2/2,-||x_2||^2/2) = \tau(\sqrt{2} x_2, \sqrt{2} x_n)^{-(\ell+v+1)}\\ &\,\,\, \times  \,_2F_1\left(\frac{v-\ell}{2}, \frac{\ell+v+1}{2}; \frac{v+1}{2}; \frac{2||x_2||^2}{\tau(\sqrt{2} x_2, \sqrt{2} x_n)^2}\right).\end{align*}
The corollary follows.
\end{proof}

We now restate corollary \ref{cor:FTint1} in a slightly different form.  Define
\[ J_v(x;\ell) =\frac{\Gamma(\ell+1)}{\Gamma(\ell-n/2+1)} \pi^{-(n+2)/2}\tau(\sqrt{2}x_n,\sqrt{2} x_2)^{2\ell+1} \frac{I_v(x;\ell)}{(\ell+v)!(\ell-v)!}.\]
\begin{corollary}\label{cor:Jv} For $v \geq 0$ and $||x_n|| + ||x_2|| < \sqrt{2}$, one has $J_v(x;\ell)$
\[ = 2^{\ell-v} \binom{\ell}{v} (-\sqrt{2} x_2, iv_1+v_2)^v \tau(\sqrt{2}x_n,\sqrt{2}x_2)^{\ell-v} \,_2F_1\left(\frac{v-\ell}{2}, \frac{v+\ell+1}{2};v+1; \frac{2||x_2||^2}{\tau(\sqrt{2}x_n,\sqrt{2}x_2)^2}\right).\]
For $v \leq 0$, one has
\[J_v(x;\ell) = \left(-\frac{a^*}{a}\right)^{|v|} J_{|v|}(x;\ell),\]
where $a = \sqrt{2}(x_2, iv_1+v_2)$.
\end{corollary}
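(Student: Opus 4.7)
The plan is to show that Corollary \ref{cor:Jv} is essentially a repackaging of Corollary \ref{cor:FTint1}, reorganizing the constants into a cleaner form, and then to deduce the $v\leq 0$ case from the $v\geq 0$ case by a symmetry argument.

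For the case $v\geq 0$, I would simply substitute the formula of Corollary \ref{cor:FTint1} for $\tau(\sqrt{2}x_n,\sqrt{2}x_2)^{2\ell+1} I_v(x;\ell)$ into the definition
\[
J_v(x;\ell) = \frac{\Gamma(\ell+1)}{\Gamma(\ell-n/2+1)}\,\pi^{-(n+2)/2}\,\tau(\sqrt{2}x_n,\sqrt{2}x_2)^{2\ell+1}\,\frac{I_v(x;\ell)}{(\ell+v)!(\ell-v)!}
\]
and collect factors. Four cancellations are expected: the $\Gamma(\ell-n/2+1)$ factor cancels; $\pi^{-(n+2)/2}\cdot(2\pi)^{(n+2)/2}=2^{(n+2)/2}$; $\Gamma(\ell+v+1)/(\ell+v)!=1$; and $\Gamma(\ell+1)/(\Gamma(v+1)(\ell-v)!)=\binom{\ell}{v}$. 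The surviving powers of $2$ combine as $2^{(n+2)/2}\cdot 2^{\ell-(n+v+2)/2}=2^{\ell-v/2}$, and the identity $2^{\ell-v/2}(-x_2,iv_1+v_2)^v=2^{\ell-v}(-\sqrt{2}\,x_2,iv_1+v_2)^v$ converts the expression to the desired form. I expect no real obstacle here; the only care required is in keeping signs and normalizations straight.

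For the case $v\leq 0$, I would exploit two basic symmetries of the defining integral \eqref{eqn:Ivx}: the relation $K_{-v}(y)=K_{v}(y)$, and the observation that for $b=(\omega,v_1+iv_2)$ one has $|b|/b=\bar b/|b|$. Concretely, if $x_{2}^{*}$ denotes the reflection of $x_2$ obtained by negating its $v_2$-component, then the change of variables $\omega_2\mapsto \omega_2^{*}$ (the corresponding reflection of the integration variable in $V_2$) shows that
\[
I_{-|v|}(x;\ell)=I_{|v|}(x^{*};\ell),
\]
where $x^{*}=x_2^{*}+x_n$. Since $\tau(\sqrt{2}x_n,\sqrt{2}x_2)$ depends only on $\|x_2\|^2$ and $\|x_n\|^2$, it is unaffected by the reflection; and the Gauss hypergeometric factor in the $v\geq 0$ formula depends on $x$ only through $\|x_2\|^2/\tau^2$, so it too is invariant. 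The only object that changes is the phase $(-\sqrt{2}x_2,iv_1+v_2)^{|v|}$. A short computation, using $(x_2,iv_1-v_2)=-(x_2,-iv_1+v_2)$, gives $(-\sqrt{2}x_2^{*},iv_1+v_2)=a^{*}$, where $a=\sqrt{2}(x_2,iv_1+v_2)$ as in the statement. Thus $J_{|v|}(x^{*};\ell)/J_{|v|}(x;\ell)=(a^{*})^{|v|}/(-a)^{|v|}=(-a^{*}/a)^{|v|}$, which is exactly the claim.

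The computation is entirely mechanical once one trusts Corollary \ref{cor:FTint1}; the only conceptual step is the reflection argument for the $v\leq 0$ case, and the only mild danger point is bookkeeping of the signs $(-1)^{|v|}$ and the relationship between $(x_2,iv_1\pm v_2)$ and $a,a^{*}$. Nothing here requires a new hard estimate or intertwining computation, so I would not expect any substantial obstacle.
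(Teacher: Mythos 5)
Your proposal is correct and matches the paper's own (much terser) proof: the $v\geq 0$ case is exactly the substitution of Corollary \ref{cor:FTint1} into the definition of $J_v$ with the constant bookkeeping you describe, and the $v\leq 0$ case is the paper's ``replace $v_2$ by $-v_2$'' symmetry, which your reflection of $x_2$ and of the integration variable $\omega_2$ realizes equivalently, ending with the same identity $\sqrt{2}(x_2,iv_1-v_2)=-a^*$.
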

\begin{proof} The first part of the corollary has already been proved.  The second follows immediately from the first by replacing $v_2$ with $-v_2$ and noting that $\sqrt{2}(x_2,iv_1-v_2) = - a^*$ if $a = \sqrt{2}(x_2, iv_1+v_2)$.\end{proof}

We now use the following lemma.  We will apply it with
\[a = -\sqrt{2} (x_2,iv_1+v_2),\,\,\, b = \left|1 + \frac{2||x_n||^2 - 2||x_2||^2}{4}\right|.\]
\begin{lemma}\label{lem:2F1Sum} For $\ell \geq 0$ even, $a \in \C^\times$ and $b >0$ with $|a| < b$, one has
\begin{align*} (ax^2 + 2bxy-a^*y^2)^{\ell} &= \sum_{0 \leq v \leq \ell}\binom{\ell}{v} 2^{\ell-v} \delta^{1/2}_{v,0}(xy)^{\ell-v}x^{2v} a^v(|a|^2+b^2)^{(\ell-v)/2} \\ &\,\,\,\,\, \times \,_2F_1\left(\frac{v-\ell}{2},\frac{v+\ell+1}{2};v+1; \frac{|a|^2}{|a|^2+b^2}\right) \\ & +\sum_{0 \leq v \leq \ell}\binom{\ell}{v} 2^{\ell-v} \delta^{1/2}_{v,0}(xy)^{\ell-v}y^{2v} (-a^*)^v(|a|^2+b^2)^{(\ell-v)/2} \\ &\,\,\,\,\, \times \,_2F_1\left(\frac{v-\ell}{2},\frac{v+\ell+1}{2};v+1; \frac{|a|^2}{|a|^2+b^2}\right).\end{align*}
Here $\delta_{v,0}^{1/2}$ is equal to $1/2$ if $v=0$ and equal to $1$ otherwise.
\end{lemma}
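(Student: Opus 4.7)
The plan is to prove this as a purely algebraic identity by expanding the LHS with the multinomial theorem and matching the RHS after applying Pfaff's transformation of the Gauss hypergeometric function. The hypothesis $|a|<b$ is used only to guarantee absolute convergence of the hypergeometric series; the LHS is of course a polynomial in $a,a^*,b,x,y$.

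First, I would expand $(ax^2 + 2bxy - a^*y^2)^\ell$ by the multinomial theorem. For each $v$ with $0\leq v\leq \ell$, the coefficient of $x^{\ell+v}y^{\ell-v}$ comes from the terms with $j-m=v$ in the expansion (with $j+k+m=\ell$), and, using $a\cdot(-a^*)=-|a|^2$, it equals
$a^v(2b)^{\ell-v}\sum_{m=0}^{\lfloor(\ell-v)/2\rfloor}\frac{\ell!}{(v+m)!(\ell-v-2m)!\,m!}\left(-\frac{|a|^2}{4b^2}\right)^m$.
The coefficient of $x^{\ell-v}y^{\ell+v}$ is obtained by the substitution $(x,y,a)\mapsto(y,x,-a^*)$, which preserves the LHS, so it suffices to identify the above finite sum with the $v$-th term of the first sum on the RHS.

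Next, I would apply Pfaff's transformation $\,_2F_1(A,B;C;t)=(1-t)^{-A}\,_2F_1(A,C-B;C;t/(t-1))$ to the RHS hypergeometric with $A=(v-\ell)/2$, $B=(v+\ell+1)/2$, $C=v+1$ and $t=|a|^2/(|a|^2+b^2)$. Since $1-t=b^2/(|a|^2+b^2)$, this collapses the factor $(|a|^2+b^2)^{(\ell-v)/2}\,_2F_1(\cdots;t)$ to $b^{\ell-v}\,_2F_1\left(\frac{v-\ell}{2},\frac{v-\ell+1}{2};v+1;-|a|^2/b^2\right)$, aligning its argument with that of the multinomial sum. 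The duplication-type identity $\left(\frac{v-\ell}{2}\right)_m\left(\frac{v-\ell+1}{2}\right)_m=4^{-m}(v-\ell)_{2m}$, combined with $(v-\ell)_{2m}=(\ell-v)!/(\ell-v-2m)!$ (which vanishes for $\ell-v<2m$, making the series terminate) and $(v+1)_m=(v+m)!/v!$, then shows that the transformed hypergeometric matches the multinomial sum termwise, with the factor $\binom{\ell}{v}$ absorbing the remaining factorials.

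Finally, I would verify that the two sums in the statement do not double-count: for $v\neq 0$ they contribute to disjoint monomials, while for $v=0$ each contributes with weight $\delta^{1/2}_{v,0}=1/2$ to the coefficient of $(xy)^\ell$, summing correctly to the full diagonal coefficient of the expansion. The main obstacle here is purely the bookkeeping of Pochhammer symbols; there is no conceptual difficulty beyond routine verification that the combinatorial factors line up, and one expects the identity to be standard in tables of hypergeometric transformations.
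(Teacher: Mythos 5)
Your proposal is correct and follows essentially the same route as the paper's proof: both hinge on Pfaff's transformation to convert the hypergeometric argument to $-|a|^2/b^2$, the duplication identity $\left(\tfrac{v-\ell}{2}\right)_m\left(\tfrac{v-\ell+1}{2}\right)_m = 4^{-m}(\ell-v)!/(\ell-v-2m)!$, and a termwise match against the multinomial expansion of the left-hand side. The only difference is organizational (you expand the LHS first and match coefficients of $x^{\ell+v}y^{\ell-v}$, while the paper simplifies the two sums $S_\ell^a$, $S_\ell^{a^*}$ until the multinomial form appears), which does not change the substance of the argument.
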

\begin{proof} Denote by $S_\ell^a$ the first sum on the right-hand side of the displayed equation in the statement of the lemma and $S_{\ell}^{a^*}$ the second sum on the right-hand side this equation.

One has the well-known identity
\[\,_2F_1(a',b';c';z) = (1-z)^{-a'}\,_2F_1\left(a',c'-b';c';\frac{z}{z-1}\right).\]
This follows from the integral representation
\[\,_2F_1(a',b';c';z) = \frac{\Gamma(c')}{\Gamma(b')\Gamma(c'-b')} \int_{0}^{1}{t^{b'-1}(1-t)^{c'-b'-1}(1-zt)^{-a'}\,dt},\]
valid for $Re(c') > Re(b') > 0$, by making the substitution $t \mapsto 1-t$.
Thus
\[(|a|^2+b^2)^{(\ell-v)/2}\,_2F_1\left(\frac{v-\ell}{2},\frac{v+\ell+1}{2};v+1; \frac{|a|^2}{|a|^2+b^2}\right) = b^{\ell-v}\,_2F_1\left(\frac{v-\ell}{2},\frac{v-\ell}{2} + \frac{1}{2};v+1;-\frac{|a|^2}{b^2}\right).\]

We thus must evaluate the sum
\[\sum_{0 \leq v \leq \ell}{\binom{\ell}{v}(2bxy)^{\ell-v} a^{v} \delta^{1/2}_{v,0} x^{2v}\,_2F_1\left(\frac{v-\ell}{2}, \frac{v-\ell}{2}+\frac{1}{2};v+1; -\frac{|a|^2}{b^2}\right)}\]

Now, note that if $\delta \geq 0$ is an integer then $\left(-\frac{\delta}{2}\right)_m \left(-\frac{\delta}{2}+ \frac{1}{2}\right)_m = \left(\frac{1}{4}\right)^m \frac{\delta!}{(\delta-2m)!}$.  Thus, plugging in the definition of $\,_2F_1$, we obtain
\begin{align*} S_\ell^a &= \sum_{0 \leq v \leq \ell, 0 \leq m}\binom{\ell}{v}(2bxy)^{\ell-v} a^{v} \delta^{1/2}_{v,0} x^{2v}(-1)^m \frac{ \binom{\ell-v}{2m} (2m)!}{(v+1)_m m!} \left(\frac{|a|}{2b}\right)^{2m} \\ &= \sum_{0 \leq v \leq \ell, 0 \leq m} \frac{\ell !}{(\ell-v-2m)! (v+m)! m!} (-1)^m (2bxy)^{\ell-v-2m} a^v |a|^{2m}(xy)^{2m} \delta^{1/2}_{v,0} x^{2v}. \end{align*}
Similarly,
\[S_{\ell}^{a^*} = \sum_{0 \leq v \leq \ell, 0 \leq m} \frac{\ell !}{(\ell-v-2m)! (v+m)! m!} (-1)^m (2bxy)^{\ell-v-2m} (-a^*)^v |a|^{2m}(xy)^{2m} \delta^{1/2}_{v,0} y^{2v}.\]
The lemma now follows easily.
\end{proof}

The condition $|a| < b$ in Lemma \ref{lem:2F1Sum} can now be removed:
\begin{corollary}\label{cor:2F1Sum} The statement of Lemma \ref{lem:2F1Sum} holds under the condition $b > 0$, and not just $b > |a|$.\end{corollary}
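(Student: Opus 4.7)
The plan is to extend the identity in Lemma \ref{lem:2F1Sum} beyond the region $b > |a|$ by analytic continuation in the variable $b$, keeping $a \in \C^\times$ fixed throughout. First I would recast both sides as functions of $b \in (0,\infty)$ and verify that each is real analytic on this entire interval. For the left-hand side $(ax^2 + 2bxy - a^*y^2)^{\ell}$, this is immediate, since it is a polynomial in $b$ (with coefficients depending on $a,a^*,x,y$).

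The key step is the analyticity of the right-hand side on all of $(0,\infty)$, not merely on $(|a|,\infty)$. For each summand, I would observe that the prefactor $(|a|^2+b^2)^{(\ell-v)/2}$ is real analytic in $b > 0$: the base $|a|^2 + b^2$ is strictly positive, so one may (and I would) fix the unique branch of the fractional power taking positive values on positive reals. Next, since $a \neq 0$, the argument of the hypergeometric factor,
\[
z(b) \;=\; \frac{|a|^2}{|a|^2+b^2},
\]
lies in the open interval $(0,1)$ for every $b > 0$, hence strictly inside the unit disc of convergence of the $\,_2F_1$ series. Thus the defining power series of $\,_2F_1\bigl(\tfrac{v-\ell}{2},\tfrac{v+\ell+1}{2};v+1;z(b)\bigr)$ converges absolutely and represents a real analytic function of $b$ on $(0,\infty)$. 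Combining with the polynomial prefactors in $a,a^*,x,y$, each summand on the right-hand side is real analytic in $b \in (0,\infty)$, and the same then holds for the full finite sum.

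With real analyticity of both sides on the connected open interval $(0,\infty)$ in hand, the corollary follows from the identity theorem for real analytic functions. Indeed, Lemma \ref{lem:2F1Sum} gives equality on the nonempty open subinterval $(|a|,\infty) \subset (0,\infty)$, so the equality propagates to the whole interval $(0,\infty)$.

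I do not anticipate a serious obstacle here; the argument is formal. The only mild subtlety worth noting is the choice of branch of $(|a|^2+b^2)^{(\ell-v)/2}$ when $\ell-v$ is odd, but since $|a|^2+b^2 > 0$ on the whole domain, the positive real branch is canonical and causes no difficulty. One could alternatively complexify in $b$ and argue by the holomorphic identity theorem on a neighborhood of $(0,\infty)$ in $\C$, but the real analytic version suffices.
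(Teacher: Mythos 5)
Your argument is correct. The paper proves the corollary by a closely related but slightly different continuation: it divides both sides by $b^{\ell}$, observes that the resulting identity depends only on $t=a/b\in\C$, and extends from the disc $|t|<1$ (which is where Lemma \ref{lem:2F1Sum} applies) to all of $\C$ by analyticity --- necessarily \emph{real}-analyticity in $(\mathrm{Re}\,t,\mathrm{Im}\,t)$, since $t^*$ and $|t|$ appear, so the expressions are not holomorphic in $t$. You instead fix $a\in\C^\times$ and continue in the single real variable $b$ along $(0,\infty)$, checking that the left side is a polynomial in $b$ and that each summand on the right is real analytic because $|a|^2+b^2>0$ and $z(b)=|a|^2/(|a|^2+b^2)$ stays in $(0,1)$, strictly inside the disc of convergence of $\,_2F_1$ (whose third parameter $v+1$ is a positive integer, so the series is well defined; for even $v$ it even terminates). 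Both routes rest on the identity theorem applied on a region overlapping the one where Lemma \ref{lem:2F1Sum} is already established; yours is marginally more elementary in that it needs only the one-variable real-analytic identity theorem and sidesteps the non-holomorphy in $t$, while the paper's normalization by $b^{\ell}$ makes the homogeneity of the identity explicit and treats all $a$ simultaneously. Either is a complete proof.
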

\begin{proof} Suppose $b > 0$ and set $t = a/b$, so that $t \in \C$.  Dividing both sides of the statement of Lemma \ref{lem:2F1Sum} by $b^{\ell}$, one obtains
\begin{align*} (tx^2 + 2xy - t^* y^2)^{\ell} &= \sum_{0 \leq v \leq \ell}\binom{\ell}{v} 2^{\ell-v} \delta^{1/2}_{v,0}(xy)^{\ell-v}x^{2v} t^v(|t|^2+1)^{(\ell-v)/2} \\ &\,\,\,\,\, \times \,_2F_1\left(\frac{v-\ell}{2},\frac{v+\ell+1}{2};v+1; \frac{|t|^2}{|t|^2+1}\right) \\ & +\sum_{0 \leq v \leq \ell}\binom{\ell}{v} 2^{\ell-v} \delta^{1/2}_{v,0}(xy)^{\ell-v}y^{2v} (-t^*)^v(|t|^2+1)^{(\ell-v)/2} \\ &\,\,\,\,\, \times \,_2F_1\left(\frac{v-\ell}{2},\frac{v+\ell+1}{2};v+1; \frac{|t|^2}{|t|^2+1}\right).\end{align*}
From Lemma \ref{lem:2F1Sum}, the above equality holds for $t \in \C$ with $|t| < 1$.  However, both sides are analytic functions of $t$, so their equality for $|t| < 1$ implies their equality for all $t \in \C$.  The corollary follows.\end{proof}

Combining Corollary \ref{cor:2F1Sum} with Corollary \ref{cor:Jv}, we obtain:
\begin{corollary}\label{cor:FTrank2} Set $a_1 = \sqrt{2}(x_2, iv_1+v_2)$ and $b_1 = (||x_2||^2-||x_n||^2-2)/2$.  Then
\begin{equation}\label{eqn:FTrank2}\frac{\Gamma(\ell+1)}{\Gamma(\ell-n/2+1)} \pi^{-(n+2)/2} \sum_{-\ell \leq v \leq \ell}{I_v(x;\ell)\frac{x^{\ell+v}y^{\ell-v}}{(\ell+v)!(\ell-v)!}} = \frac{(a_1 x^2 + 2b_1xy - a_1^*y^2)^{\ell}}{\tau(\sqrt{2} x_n,\sqrt{2} x_2)^{2\ell+1}}.\end{equation}
\end{corollary}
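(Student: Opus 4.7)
The plan is to expand both sides of the claimed identity in the basis $\{x^{\ell+v}y^{\ell-v}\}_{-\ell \leq v \leq \ell}$ and match coefficients, using Corollary \ref{cor:Jv} on the left and Corollary \ref{cor:2F1Sum} on the right. First, the definition
\[J_v(x;\ell) = \frac{\Gamma(\ell+1)}{\Gamma(\ell-n/2+1)}\pi^{-(n+2)/2}\tau(\sqrt{2}x_n,\sqrt{2}x_2)^{2\ell+1}\frac{I_v(x;\ell)}{(\ell+v)!(\ell-v)!}\]
together with clearing the common factor $\tau^{2\ell+1}$ reduces the claim to the polynomial identity
\[\sum_{-\ell \leq v \leq \ell} J_v(x;\ell)\, x^{\ell+v}y^{\ell-v} = (a_1 x^2 + 2b_1 xy - a_1^* y^2)^\ell.\]

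Before invoking the two corollaries, I will verify two elementary numerical identities. Writing $x_2 = \alpha v_1 + \beta v_2$ in the orthonormal basis gives $a_1 = \sqrt{2}(i\alpha+\beta)$, and hence $|a_1|^2 = 2||x_2||^2$. Unwinding the definition $\tau^2 = (1+(||x_n||^2-||x_2||^2)/2)^2 + 2||x_2||^2$ together with $b_1 = -(1+(||x_n||^2-||x_2||^2)/2)$ yields $\tau^2 = b_1^2 + |a_1|^2$. Consequently $\tau^{\ell-v} = (|a_1|^2+b_1^2)^{(\ell-v)/2}$ and $2||x_2||^2/\tau^2 = |a_1|^2/(|a_1|^2+b_1^2)$, which lets me rewrite the formula of Corollary \ref{cor:Jv} for $v \geq 0$ as
\[J_v(x;\ell) = 2^{\ell-v}\binom{\ell}{v}(-a_1)^v (|a_1|^2+b_1^2)^{(\ell-v)/2}\,_2F_1\left(\tfrac{v-\ell}{2}, \tfrac{v+\ell+1}{2}; v+1; \tfrac{|a_1|^2}{|a_1|^2+b_1^2}\right).\]
For $v \geq 1$ this is exactly the $v$-th summand of $S_\ell^{a}$ from Corollary \ref{cor:2F1Sum} taken with $a = -a_1$ and $b = -b_1$. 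The $v < 0$ terms on the left-hand side, via the relation $J_{-v} = (-a_1^*/a_1)^v J_v$ from Corollary \ref{cor:Jv}, match the $v$-th summand of $S_\ell^{a^*}$ with the same $a = -a_1$ (so that $-a^* = a_1^*$), after reindexing $v \mapsto -v$; the $v=0$ contribution splits evenly between $S_\ell^a$ and $S_\ell^{a^*}$, which is exactly what the factor $\delta_{v,0}^{1/2} = 1/2$ in Corollary \ref{cor:2F1Sum} records.

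Assembling these pieces via Corollary \ref{cor:2F1Sum} gives
\[\sum_{-\ell \leq v \leq \ell} J_v(x;\ell)\, x^{\ell+v}y^{\ell-v} = (-a_1 x^2 - 2b_1 xy + a_1^* y^2)^\ell = (-1)^\ell(a_1 x^2 + 2b_1 xy - a_1^* y^2)^\ell,\]
which agrees with the desired right-hand side since $\ell$ is even. A final step is to remove the restrictions $||x_n||+||x_2|| < \sqrt{2}$ (from Corollary \ref{cor:Jv}) and $b > 0$ (from Corollary \ref{cor:2F1Sum}): both sides of \eqref{eqn:FTrank2} are real-analytic functions of $x \in V'(\R)$ (the left because the integral defining $I_v(x;\ell)$ is absolutely convergent once $A = \ell - n/2 \geq 0$, the right by inspection), so analytic continuation extends the equality globally. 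The main obstacle is the sign bookkeeping — in particular reconciling the $(-1)^v$ hidden in $(-\sqrt{2}x_2, iv_1+v_2)^v$ from Corollary \ref{cor:Jv} with the plain $a_1^v$ appearing in $S_\ell^a$ — which is handled systematically by the uniform choice $a = -a_1$, $b = -b_1$, combined with the parity of $\ell$.
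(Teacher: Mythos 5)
Your proposal is correct and follows the same route as the paper: identify $\tau(\sqrt{2}x_n,\sqrt{2}x_2)^2 = |a_1|^2 + b_1^2$, match the terms of Corollary \ref{cor:Jv} (for $v\geq 0$ and, via the $(-a_1^*/a_1)^{|v|}$ relation, for $v\leq 0$) with the summands $S_\ell^a$, $S_\ell^{a^*}$ of Corollary \ref{cor:2F1Sum} under the substitution $a=-a_1$, $b=-b_1$, absorb the resulting $(-1)^\ell$ using that $\ell$ is even, and then remove the restrictions $\|x_2\|+\|x_n\|<\sqrt{2}$ and $b>0$ by continuity/analyticity of both sides. Your write-up is in fact more explicit about the coefficient matching and the $\delta_{v,0}^{1/2}$ bookkeeping than the paper's own two-line deduction.
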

\begin{proof} First suppose $||x_2|| + ||x_n|| < \sqrt{2}$.  Note that the assumption $||x_2|| + ||x_n|| < \sqrt{2}$ implies that $b_1$ is negative, and thus $b = |b_1| = -b_1$.  Therefore, from Corollaries \ref{cor:2F1Sum} and \ref{cor:Jv} the equality above holds so long as $||x_2|| \neq 0$, $||x_n|| \neq 0$, and $b = |1 + \frac{||x_n||^2-||x_2||^2}{2}| \neq 0$.  The conditions $||x_2|| \neq 0$ and $||x_n|| \neq 0$ are used in the manipulations used to prove Proposition \ref{prop:FTint1}.  But now the absolute convergence computations for $I_v(x;\ell)$ prove that $I_v(x;\ell)$ is a continuous function of $x$.  As both sides of \eqref{eqn:FTrank2} are continuous in $x$, the corollary follows in this case.

For the general case, it follows from Proposition \ref{prop:FTint1} that $I_v(x;\ell)$ is an analytic function of $x_2$ and $||x_n||^2$.  Therefore, the equality \eqref{eqn:FTrank2} for $||x_2|| + ||x_n|| < \sqrt{2}$ implies the equality for all $x_2, x_n$. \end{proof}

We arrive at the main archimedean theorem of this subsection.
\begin{theorem}\label{thm:FTAinv} The Fourier transform
\[\int_{V'(\R)}{e^{-2\pi i(\omega,x)} f_{\ell}(wn(x),s=\ell+1)\,dx} = C_{\ell,n} (2\pi)^{2\ell+2 - \dim(V')/2} q(\omega)^{\ell-n/2} \Wh_{2\pi \omega}(1)\]
for a nonzero rational number $C_{\ell,n}$.
\end{theorem}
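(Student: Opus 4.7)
The plan is to recognize the claimed Fourier transform as a direct consequence of Corollary \ref{cor:FTrank2} combined with Fourier inversion on $V'(\R)$. First, combining Lemma \ref{lem:indsec} with the formula $f_\ell(wn(x),s) = p_{V_3}(v)^\ell/||v||^{s+\ell}$ (where $v = (1,x,-q'(x))$), at $s=\ell+1$ one obtains
\[f_\ell(wn(x),s=\ell+1) = (-2\sqrt{2})^{-\ell}\, \frac{(a_1 x^2 + 2b_1 xy - a_1^* y^2)^\ell}{\tau(\sqrt{2}x_n, \sqrt{2}x_2)^{2\ell+1}},\]
with $a_1 = \sqrt{2}(x_2,iv_1+v_2)$ and $b_1 = (||x_2||^2-||x_n||^2-2)/2$. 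Corollary \ref{cor:FTrank2} then rewrites this as
\[f_\ell(wn(x), s=\ell+1) = C_1\sum_{v}I_v(x;\ell)\,\frac{x^{\ell+v}y^{\ell-v}}{(\ell+v)!(\ell-v)!},\qquad C_1 = (-2\sqrt{2})^{-\ell}\,\frac{\Gamma(\ell+1)}{\Gamma(\ell-n/2+1)}\,\pi^{-(n+2)/2},\]
reducing the problem to Fourier-inverting each $I_v(x;\ell)$.

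By its definition \eqref{eqn:Ivx}, $I_v(x;\ell)$ is the Fourier transform (under the $e^{i(\omega,x)}$ convention) of
\[g_v(\omega) := \charf(q(\omega)>0)\, q(\omega)^{\ell-n/2}\left(-\frac{|(\omega,v_1+iv_2)|}{(\omega,v_1+iv_2)}\right)^v K_v(\sqrt{2}|(\omega,v_1+iv_2)|).\]
Since $g_v \in L^1(V'(\R))$ by the exponential decay of the $K$-Bessel factor (meaningful on its support by Lemma \ref{lem:etaPos}), and $I_v \in L^1(V'(\R))$ by the decay $\tau^{-(2\ell+1)}$ visible in Corollary \ref{cor:FTint1} (valid for $\ell > n/2$), standard Fourier inversion together with the rescaling $\omega \mapsto 2\pi\omega$ gives
\[\int_{V'(\R)} e^{-2\pi i(\omega,x)}\, I_v(x;\ell)\,dx = (2\pi)^{n+2}\, g_v(2\pi\omega).\]

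To finish, I will rewrite $\sum_v g_v(2\pi\omega)\frac{x^{\ell+v}y^{\ell-v}}{(\ell+v)!(\ell-v)!}$ as a scalar multiple of $\Wh_{2\pi\omega}(1)$. A direct calculation from Definition \ref{def:Weta} gives $u_{2\pi\omega}(1,1) = -2\sqrt{2}\pi(\omega,v_1+iv_2)$, and combined with $q(2\pi\omega)=(2\pi)^2q(\omega)$ this yields
\[\sum_v g_v(2\pi\omega)\frac{x^{\ell+v}y^{\ell-v}}{(\ell+v)!(\ell-v)!} = (2\pi)^{2\ell-n}\,q(\omega)^{\ell-n/2}\,\Wh_{2\pi\omega}(1)\]
on the locus $q(\omega) > 0$; both sides vanish when $q(\omega) \leq 0$ (the left by the support of $g_v$, the right by Theorem \ref{thm:KBessel}). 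Assembling the constants, $C_1\cdot(2\pi)^{n+2}\cdot(2\pi)^{2\ell-n}$ can be rearranged by pairing the $\pi^{-(n+2)/2}$ inside $C_1$ with $2^{(n+2)/2}$ to produce $(2\pi)^{-(n+2)/2}$, yielding the claimed prefactor $C_{\ell,n}\cdot (2\pi)^{2\ell+2-(n+2)/2}$ with $C_{\ell,n} = (-1)^\ell\, 2^{(n+2-3\ell)/2}\,\ell!/(\ell-n/2)!$, a visibly nonzero rational number in the relevant setting of Theorem \ref{thm:IntroEis} (namely $\ell,n$ even with $\ell \geq n/2$).

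The main obstacle in this plan is purely bookkeeping: tracking powers of $\sqrt{2}$, $2\pi$, signs, and factorials consistently across the three identifications. All genuinely analytic content — the Bessel-function and Appell $F_4$ identities, the polynomial identity of Lemma \ref{lem:2F1Sum} — has already been established in the earlier parts of subsection \ref{subsec:arch}, and the Fourier-inversion step is routine given the available $L^1$ estimates.
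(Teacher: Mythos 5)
Your argument is correct and follows essentially the same route as the paper: identify $f_{\ell}(wn(x),s=\ell+1)$ with the explicit rational function of Lemma \ref{lem:indsec}, invoke Corollary \ref{cor:FTrank2} to express it as $\sum_v I_v(x;\ell)[x^{\ell+v}][y^{\ell-v}]$ up to constants, apply Fourier inversion, and match the result with $q(\omega)^{\ell-n/2}\Wh_{2\pi\omega}(1)$ after the rescaling $\omega\mapsto 2\pi\omega$. Your treatment is in fact slightly more careful than the paper's on the constants and on the $L^1$ hypotheses needed for inversion (though note that the $L^1$-ness of $I_v$ genuinely requires the decay rate $\ell+1>n+2$ from the worst direction $\|x_2\|\approx\|x_n\|$, i.e.\ the range of absolute convergence, not merely $\ell>n/2$).
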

\begin{proof} Making the change of variable $\omega \mapsto 2\pi \omega$ in the integral $I_v(x;\ell)$ of \eqref{eqn:Ivx}, one gets
\begin{equation}\label{eqn:chOv}(2\pi)^{-(2\ell + 2)} I_v(x;\ell) = \int_{V'(\R)}{e^{2\pi i (\omega,x)} q(\omega)^{\ell-n/2} \left(-\frac{|(2\pi\omega,v_1+iv_2)|}{(2\pi \omega,v_1+iv_2)}\right)^{v} K_v(\sqrt{2}|(2\pi\omega,v_1+iv_2)|)\,d\omega}.\end{equation}

From Corollary \ref{cor:FTrank2}, one obtains
\begin{equation}\label{eqn:FTrank3} \pi^{-(2\ell+2)}  \sum_{-\ell \leq v \leq \ell}{I_v(x;\ell)\frac{x^{\ell+v}y^{\ell-v}}{(\ell+v)!(\ell-v)!}} \stackrel{\cdot}{=} \pi^{(n+2)/2 - (2\ell+2)}\frac{(a_1 x^2 + 2b_1xy - a_1^*y^2)^{\ell}}{\tau(\sqrt{2} x_n,\sqrt{2} x_2)^{2\ell+1}}\end{equation}
in the notation of that corollary, where the $\stackrel{\cdot}{=}$ means that the two sides are equal up to a nonzero rational number.  By applying Lemma \ref{lem:indsec}, the right-hand side of \eqref{eqn:FTrank3} is $\pi^{(n+2)/2 - (2\ell+2)} f_{\ell}(wn(x),s=\ell+1)$.  Thus from \eqref{eqn:chOv}, one gets
\[\pi^{2\ell+2 - \dim(V')/2} \int_{V'(\R)}{e^{2\pi i (\omega,x)} q(\omega)^{\ell-n/2} \Wh_{2\pi \omega}(1)\,d\omega} \stackrel{\cdot}{=} f_{\ell}(wn(x),s=\ell+1).\]
The theorem follows by Fourier inversion. \end{proof}

Combining Theorem \ref{thm:FTAinv} with Proposition \ref{prop:EisConst} and Proposition \ref{prop:EisRk1} proves the first main theorem of this paper:
\begin{theorem}\label{thm:EisAlgFC} Suppose $\ell > n+1$ is even, $\dim(V')$ is a multiple of $4$ and $\Phi_f$ is valued in $\overline{\Q}$.  Then the Fourier coefficients of the Eisenstein series $\pi^{-\ell} E(g,\Phi_f,\ell;s)$ at $s=\ell+1$ are algebraic numbers.\end{theorem}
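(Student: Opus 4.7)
The plan is to verify each of the three algebraicity conditions (Fourier coefficients $a_\varphi(\eta) \in \overline{\Q}$, the holomorphic modular form $\Phi$ with $\overline{\Q}$-rational expansion, and $\beta \in \overline{\Q} \cdot \zeta(\ell+1)/(2\pi)^\ell$) by inspecting the three pieces of the Fourier expansion of $E_\ell(g,\Phi_f,s=\ell+1)$ separately, following the list in the introduction of the section. First I would apply Theorem \ref{thm:IntroFE} to $\varphi = \pi^{-\ell} E_\ell(g,\Phi_f,s=\ell+1)$, which is indeed a modular form of weight $\ell$ by item (1) of that introduction, using Lemma \ref{lem:Dfl} and absolute convergence (which is why the hypothesis $\ell > n+1$ is needed).

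For the constant term, I would appeal to the computation in subsection \ref{subsec:CT}: after the lemma showing $M(w,s)f_\ell$ vanishes at $s=\ell+1$, the constant term is the sum of $f_\ell(g,\Phi_f,s=\ell+1)$ and $E_\ell^M(g,\Phi_f,s=\ell+1)$. Proposition \ref{prop:EisConst}(2) gives that $\pi^{-\ell} E_\ell^M$ corresponds to a holomorphic weight $\ell$ Eisenstein series on $\SO(V')$ with algebraic Fourier coefficients; this accounts for conditions (2) (the piece $\Phi \cdot x^{2\ell}$) and the matching $\Phi' \cdot y^{2\ell}$ piece. The section $f_\ell$ contributes a term of the shape $t^\ell |t| \beta(m_f) x^\ell y^\ell$, where $\beta(m_f)$ equals $\pi^{-\ell}$ times the Tate integral $f_{fte}(m_f,\Phi_f,s=\ell+1)$; since $\Phi_f$ is $\overline{\Q}$-valued and the unramified local integrals at finite primes produce the Euler factors of $\zeta(s)$ at $s=\ell+1$, this value lies in $\overline{\Q} \cdot \zeta(\ell+1)/\pi^\ell = \overline{\Q} \cdot \zeta(\ell+1)/(2\pi)^\ell$, verifying condition (3).

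For the nonzero Fourier coefficients, the isotropic (rank one) case is handled directly by Proposition \ref{prop:EisRk1}, which gives $\overline{\Q}$-valued coefficients after the $\pi^{-\ell}$ normalization. For the anisotropic (rank two) case, the Euler factorization of the Fourier integral reduces the problem to a product of local integrals. At the archimedean place, Theorem \ref{thm:FTAinv} gives $C_{\ell,n}(2\pi)^{2\ell+2-\dim(V')/2} q(\omega)^{\ell-n/2} \Wh_{2\pi\omega}(1)$ with $C_{\ell,n} \in \Q^\times$. At the good finite primes, Proposition \ref{prop:Jfte} yields (after multiplying the Euler product) a rational multiple of $\pi^{-(\ell+2-\dim(V')/2)}$ (using $\dim(V')$ divisible by $4$ to convert the zeta values at even integers $>1$ into rational multiples of the appropriate power of $\pi$). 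At the finitely many bad finite primes, the lemma at the end of subsection \ref{subsec:ftePart} shows the local integrals are $\overline{\Q}$-valued. Multiplying these contributions, the powers of $\pi$ combine to give $2^{2\ell+2-\dim(V')/2}\pi^{\ell}$ times an algebraic number; the exponent $2\ell+2-\dim(V')/2$ is even under the hypotheses, so after the overall $\pi^{-\ell}$ normalization the coefficient of $\Wh_{2\pi\eta}$ lies in $\overline{\Q}$.

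The main conceptual obstacle, which is really the content of subsection \ref{subsec:arch}, is already resolved: it is identifying the archimedean Fourier transform of $f_\ell(wn(x),s=\ell+1)$ as a rational multiple of a specific power of $\pi$ times the generating Whittaker function $\Wh_{2\pi\omega}(1)$. The remaining task in this proof is purely bookkeeping: tracking the powers of $\pi$ across the three factor types (archimedean, unramified finite, ramified finite) and verifying that the divisibility hypotheses on $\ell$ and $\dim(V')$ make all the exponents line up so that $\pi^{-\ell}$ exactly cancels them.
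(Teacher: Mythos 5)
Your proposal is correct and follows essentially the same route as the paper, which proves the theorem by combining the constant-term computation (Proposition \ref{prop:EisConst} plus the vanishing of $M(w,s)f_\ell$ at $s=\ell+1$), the rank one case (Proposition \ref{prop:EisRk1}), and the rank two case assembled from the unramified and bad-place computations of subsection \ref{subsec:ftePart} together with the archimedean Fourier transform of Theorem \ref{thm:FTAinv}. Your bookkeeping of the powers of $\pi$ — $(2\pi)^{2\ell+2-\dim(V')/2}$ from the archimedean place cancelling against $\pi^{-(\ell+2-\dim(V')/2)}$ from the unramified Euler product to leave $\pi^{\ell}$ — is exactly the accounting implicit in the paper's one-line proof (the only inessential remark is that the parity of the exponent of $2$ plays no role, since integer powers of $2$ are rational regardless).
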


\section{Constant terms}\label{sec:Const} In this section, we show that the constant terms of modular forms on $\SO(4,n+2)$ (in the sense of \cite{pollackQDS}) to $\SO(3,n+1)$ are modular forms in the sense of section \ref{subsec:MFdef}.  Moreover, the quaternionic exceptional groups of type $F_4, E_6, E_7, E_8$ have Levi subgroups $L$ of type $B_{3,3}, D_{4,3}, \SU(2) \times D_{5,3}$ and $D_{7,3}$ respectively.  We also check that the constant terms of modular forms on these exceptional groups to the above $L$ are modular forms on $L$.  More precisely, in section \ref{sec:MFFE} we defined modular forms on groups $\SO(V)$, but the definition extends immediately to groups $L$ isogenous to these $\SO(V)$, which is what occurs for the above exceptional groups.

\subsection{Orthogonal groups of rank four}\label{subsec:OrthConst}  Let $V_{4,n+2} = H \oplus V$ be the rational quadratic space that is the orthogonal direct sum of $V$ and a hyperbolic plane.  The signature of $V$ is $(4,n+2)$.  Denote by $e_0, f_0$ a standard basis of the hyperbolic plane, so that the pairing $(e_0,f_0) = 1$.  Denote by $P_0 = M_0 N_0$ the parabolic subgroup of $\SO(V_{4,n+2})$ that stabilizes the line spanned by $e_0$ for the left action of $\SO(V_{4,n+2})$ on $V_{4,n+2}$.  The Levi subgroup $M_0$ is defined to be the one that stabilizes both $\mathrm{Span}(e_0)$ and $\mathrm{Span}(f_0)$.  Extend the involution $\iota$ on $V$ to $V_{4,n+2}$ by defining it to exchange $e_0$ and $f_0$.  We take as a Cartan involution on $\SO(V_{4,n+2})(\R)$ conjugation by $\iota$.  Let $K_{4,n+2}$ be the maximal compact subgroup that is the fixed points of this involution.

In \cite{pollackQDS} we defined and considered modular forms on the group $\SO(V_{4,n+2})$.  If $\varphi$ is a modular form of weight $\ell \geq 1$ on $\SO(V_{4,n+2})$, one can take the constant term of $\varphi$ along $N_0$ to obtain at automorphic function $\varphi_{N_0}$ on $M_0$.  The purpose of this subsection is to prove that this constant term is a modular form on $M_0$ of weight $\ell$, in the sense of section \ref{sec:MFFE}.  This fact follows immediately from the following proposition.

To setup the proposition precisely and to prove it, we make a few notations. Let $\{X_\gamma\}_\gamma$ be a basis of $\p=\p_{3,n+1}=V_3 \otimes V_{n+1}$, $\{u_1, \ldots, u_n, u_{n+1}\}$ be a basis of $V_{n+1}$, and $w_1, w_2, w_3$ a basis of $V_3$.  Write $y_+ = e_0+ f_0$ and $y_- = e_0 - f_0$.  Recall the operator $\widetilde{D}_\ell$ from subsection \ref{subsec:MFdef}, and the analogous operator from \cite[subsection 7.1]{pollackQDS}.  To distinguish these two operators, we write $\widetilde{D}_{4,n+2}$ for the one that acts on $\Vm_{\ell}$-valued automorphic functions on $\SO(V_{4,n+2})$ and similarly $\widetilde{D}_{3,n+1}$ for the one that acts on $\Vm_{\ell}$-valued automorhpic functions on $\SO(V)$.  Analogously, we write $D_{4,n+2}$, respectively $D_{3,n+1}$, for the so-called Schmid operators, which by definition are the $\widetilde{D}$'s followed by an appropriate $\SU(2)$-contraction $\pr_{-}:Y_2 \otimes Sym^{2\ell}(Y_2) \rightarrow Sym^{2\ell-1}(Y_2)$.

\begin{proposition}\label{prop:SO4const} Let the notation be as above.  Suppose $\varphi': \SO(V_{4,n+2})(\R) \rightarrow \Vm_{\ell}$ is left $N_0(\R)$-invariant, $\varphi'(gk) = k^{-1} \cdot \varphi'(g)$ for all $k \in K_{4,n+2}$ and $g \in \SO(V_{4,n+2})(\R)$, and $D_{4,n+2}\varphi'(g) = 0$.  Denote by $\varphi_{M_0}$ the restriction of $\varphi'$ to $\SO(V)(\R) \subseteq M_0(\R)$.  Then $D_{3,n+1}\varphi_{M_0}= 0$.\end{proposition}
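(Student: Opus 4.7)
The strategy is to realize $D_{3,n+1}\varphi_{M_0}$ at a point $g \in \SO(V)(\R)$ as the image of $D_{4,n+2}\varphi'(g)$ under a $K_{3,n+1}$-equivariant map of finite-dimensional representations; the hypothesis $D_{4,n+2}\varphi'=0$ then forces the conclusion. The key geometric input is the decomposition
\[
\p_{4,n+2} = \p_{3,n+1} \oplus (V_3 \wedge \R y_-) \oplus (\R y_+ \wedge V_{n+1}) \oplus \R y_+ \wedge y_-,
\]
coming from $V_4 = V_3 \oplus \R y_+$ and $V_{n+2} = V_{n+1} \oplus \R y_-$, together with the fact that $\p_{3,n+1} = \so(V) \cap \p_{4,n+2}$, so that every $X \in \p_{3,n+1}$ is tangent to the submanifold $\SO(V)(\R) \subseteq \SO(V_{4,n+2})(\R)$.

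Pick a basis $\{X_\gamma\}$ of $\p_{3,n+1}$ and extend it to a basis $\{X_\gamma\}\cup\{Y_\mu\}$ of $\p_{4,n+2}$ using the complement above, and let $\pi: \p_{4,n+2}^\vee \twoheadrightarrow \p_{3,n+1}^\vee$ denote the $K_{3,n+1}$-equivariant surjection dual to the inclusion, so that $\pi(X_\gamma^\vee) = X_\gamma^\vee$ and $\pi(Y_\mu^\vee) = 0$. Since each $X_\gamma$ is tangent to $\SO(V)$, one has $X_\gamma \varphi'(g) = X_\gamma \varphi_{M_0}(g)$ for $g \in \SO(V)(\R)$, and hence
\[
\widetilde{D}_{3,n+1}\varphi_{M_0}(g) = (\mathrm{id}_{\Vm_\ell} \otimes \pi)\bigl(\widetilde{D}_{4,n+2}\varphi'(g)\bigr).
\]
Composing with $\pr_{3,n+1}$ yields the key identity
\[
D_{3,n+1}\varphi_{M_0}(g) = \pr_{3,n+1} \circ (\mathrm{id}_{\Vm_\ell} \otimes \pi)\bigl(\widetilde{D}_{4,n+2}\varphi'(g)\bigr).
\]
The proposition will follow once we know that the composite $K_{3,n+1}$-equivariant map $\pr_{3,n+1} \circ (\mathrm{id}_{\Vm_\ell} \otimes \pi): \Vm_\ell \otimes \p_{4,n+2}^\vee \to (S^{2\ell}(Y_2) \oplus S^{2\ell-2}(Y_2)) \boxtimes V_{n+1}$ factors through $\pr_{4,n+2}$, for then the right-hand side above is the image of $D_{4,n+2}\varphi'(g) = 0$.

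The main obstacle is verifying this factorization, a finite-dimensional $K_{3,n+1}$-representation theoretic compatibility of the two projections. The representation-theoretic route is to observe that under restriction of $K_{4,n+2}$-representations to $K_{3,n+1}$, the target of $\pr_{4,n+2}$ naturally contains $(S^{2\ell}(Y_2) \oplus S^{2\ell-2}(Y_2)) \boxtimes V_{n+1}$ as the summand corresponding to $V_{n+1} \subset V_{n+2}$, and then to check on weight vectors that $\pr_{3,n+1} \circ (\mathrm{id}_{\Vm_\ell} \otimes \pi)$ and the composition of $\pr_{4,n+2}$ with the projection to this summand agree up to a common normalization. A second, more explicit path, in the spirit of Proposition \ref{thm:Dcoefs} and the appendix of this paper, is to compute the Iwasawa decompositions of the three types of extra basis elements---namely $v_3 \wedge y_- = -2 e_0 \wedge v_3 + y_+ \wedge v_3$ with $-2 e_0 \wedge v_3 \in \n_0$ and $y_+ \wedge v_3 \in \wedge^2 V_4 \subset \k_{4,n+2}$; $y_+ \wedge w = 2 e_0 \wedge w - y_- \wedge w$ for $w \in V_{n+1}$ (analogous); and $y_+ \wedge y_- = -2 e_0 \wedge f_0$ in the center of $\m_0$---then to use left $N_0$-invariance of $\varphi'$ to kill the $\n_0$-parts at points of $\SO(V)$, use $K_{4,n+2}$-equivariance to turn the $\k_{4,n+2}$-parts into actions on $\Vm_\ell$, observe that the $y_+ \wedge y_-$ term is annihilated by $\pi$ and so drops out, and match the resulting explicit formula with Proposition \ref{thm:Dcoefs}.
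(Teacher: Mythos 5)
Your reduction to a finite-dimensional statement starts correctly --- the identity $\widetilde{D}_{3,n+1}\varphi_{M_0}(g) = (\mathrm{id}\otimes\pi)\bigl(\widetilde{D}_{4,n+2}\varphi'(g)\bigr)$ does hold --- but the factorization you then rely on, namely that $\pr_{3,n+1}\circ(\mathrm{id}\otimes\pi)$ kills $\ker\pr_{4,n+2}$, is \emph{false} as a statement about the $K$-representation $\Vm_\ell\otimes\p_{4,n+2}^\vee$, so neither of your two routes can establish it. Concretely, write $V_4\otimes\C \cong Y_2\boxtimes Y_2'$, so that $\pr_{4,n+2}$ contracts the first $Y_2$ against $\Vm_\ell=S^{2\ell}(Y_2)$. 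The element $\xi = x^{2\ell}\otimes(x\otimes y')\otimes u^\vee$ with $u\in V_{n+1}$ lies in $\ker\pr_{4,n+2}$, since $x^{2\ell}\otimes x$ sits in the Cartan component $S^{2\ell+1}(Y_2)$. But $(\mathrm{id}\otimes\pi)(\xi)$ is a nonzero multiple of $x^{2\ell}\otimes xy\otimes u^\vee \in \Vm_\ell\otimes\p_{3,n+1}^\vee$, and $x^{2\ell}\otimes xy$ has a nonzero component in the $S^{2\ell}$-summand of $S^{2\ell}(Y_2)\otimes S^2(Y_2)$ (the weight-$2\ell$ vector of the $S^{2\ell+2}$-summand is $x^{2\ell}\otimes xy+\ell\,x^{2\ell-1}y\otimes x^2$, which is not proportional to $x^{2\ell}\otimes xy$ for $\ell\geq 1$); hence $\pr_{3,n+1}\bigl((\mathrm{id}\otimes\pi)(\xi)\bigr)\neq 0$. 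The culprit is exactly the summand $\R y_+\otimes V_{n+1}$ of $\p_{4,n+2}$: $\pi$ discards it, but $\pr_{4,n+2}$ sends it into the \emph{same} summand $S^{2\ell-1}(Y_2)\otimes Y_2'\boxtimes V_{n+1}$ of the target that receives $\p_{3,n+1}$, so the two can cancel inside $\ker\pr_{4,n+2}$.

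The argument therefore cannot be purely representation-theoretic: you must use the invariance hypotheses on $\varphi'$ to prove that the derivative terms $(y_+\wedge u_j)\varphi'$, $u_j\in V_{n+1}$, vanish at points of $\SO(V)(\R)$. This is where the Iwasawa decomposition $y_+\wedge u_j = 2e_0\wedge u_j - y_-\wedge u_j$ from your second path does real work: the $\n_0$-part dies by left $N_0$-invariance (after conjugating by $g\in M_0$), and the $\k$-part $y_-\wedge u_j$ lies in $\wedge^2 V_{n+2}$, which acts trivially on $\Vm_\ell$. Note the contrast with the $w_k\wedge y_-$ terms, whose $\k$-parts $y_+\wedge w_k\in\wedge^2V_4$ act nontrivially on $\Vm_\ell$ and do \emph{not} drop out; being ``annihilated by $\pi$'' is irrelevant for them, because $\pr_{4,n+2}$ does not annihilate them. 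Once the $(y_+\wedge u_j)$-terms are gone, $\widetilde{D}_{4,n+2}\varphi'(g)$ has components only along $\p_{3,n+1}^\vee$ and along $(V_4\wedge\R y_-)^\vee$; applying $\pr_{4,n+2}$, the first block yields exactly $D_{3,n+1}\varphi_{M_0}$ (on $\Vm_\ell\otimes\p_{3,n+1}^\vee$ the two projections genuinely agree), while the second lands in $S^{2\ell-1}(Y_2)\otimes Y_2'\otimes\R y_-^\vee$, linearly independent of the $V_{n+1}^\vee$-block. Projecting the equation $0=D_{4,n+2}\varphi'$ onto the $V_{n+1}^\vee$-block gives $D_{3,n+1}\varphi_{M_0}=0$. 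This vanishing-plus-linear-independence step is the content of the paper's proof and is what is missing from yours: your second path assembles the right local computations but aims them at verifying a global factorization that does not hold.
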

\begin{proof} The proof follows without much difficulty, directly from the definitions.

With the above notation, we have
\begin{align*} \widetilde{D}_{4,n+2} \varphi' &= \sum_{\gamma }{X_\gamma \varphi' \otimes X_\gamma^\vee} +\sum_{1 \leq j \leq n+1}{(y_+ \wedge u_j)\varphi' \otimes (y_+ \wedge u_j)^\vee} \\ &\, + (y_+ \wedge y_{-})\varphi' \otimes (y_+ \wedge y_{-})^\vee +  \sum_{1 \leq k \leq 3}{(w_k \wedge y_{-})\varphi' \otimes (w_k \wedge y_{-})^\vee}.\end{align*}

Note that restricting to $M_0$ and applying $\pr_{-}$ to the first term gives $D_{3,n+1}\varphi_{M_0}$.  Moreover, restricting the second term to $M_0$ gives $0$ because $y_{+} \wedge u_j = 2 e_0 \wedge u_j - y_{-} \wedge u_j \in \n_0 \oplus Lie(\SO(n+2))$, and $\varphi'$ is invariant on the left under $N_0$ and on the right under $\SO(n+2)$.  Thus we obtain
\[D_{4,n+2}\varphi_{M_0} = D_{3,n+1}\varphi_{M_0} + \left.\pr_{-}\left(\sum_{1 \leq j \leq 4}{(h_j \wedge y_{-})\varphi' \otimes (h_j \wedge y_{-})^\vee}\right)\right|_{M_0}\]
where $\{h_1,h_2,h_3,h_4\}=\{y_{+},w_1,w_2,w_3\}$ is a basis of the four-dimensional space $V_{4,n+2}^{\iota = 1}.$  Note that the $\pr_{-}$-term is linearly independent from the $D_{3,n+1}$ term, because it contains a $y_{-}^\vee$.  This proves that $D_{3,n+1}\varphi_{M_0} = 0$, as desired.
\end{proof}

\subsection{Exceptional groups}\label{subsec:ExcConst} Suppose $C$ is a rational composition algebra, with $C \otimes \R$ positive definite.  Set $J = H_3(C)$ and $G_J$ the quaternionic exceptional group associated to $J$ as in \cite{pollackQDS}.  Thus $G_J$ has rational root type $F_4$ and is of Dynkin type $F_4, E_6, E_7$ or $E_8$ depending on if $\dim C$ is $1,2,4$ or $8$.  Let $Q_J =L_J V_J$ be the standard maximal parabolic subgroup of $G_J$ with Levi subgroup $L_J$ of rational type $B_3$.  In this subsection, we prove that the constant term $\varphi_{V_J}$ of a modular form $\varphi$ of weight $\ell$ on $G_J$ is a modular form of weight $\ell$ on $L_J$.  Moreover, we prove that the rank one and rank two Fourier coefficients of $\varphi_{V_J}$ are the rank one and rank two Fourier coefficients of $\varphi$.

To state precisely these results and prove them, we now make some definitions.  Let the simple roots of $F_4$ be $\alpha_j$ with $1 \leq j \leq 4$.  We label the simple roots so that $\alpha_j$ is connected to $\alpha_{j+1}$ in the Dynkin diagram, for $j =1,2,3$, and with $\alpha_1, \alpha_2$ the long roots:
\[ \circ ---- \circ ==>== \circ ---- \circ;\]
the roots are labeled $1,2,3,4$ from left to right.  Write a positive root as a four-tuple $[n_1 n_2 n_3 n_4]$, which corresponds to $\sum_j{n_j \alpha_j}$.  The rational root spaces corresponding to long roots of $F_4$ are one-dimensional while the rational root spaces corresponding to short roots spaces of $F_4$ can be identified with the composition algebra $C$.

The Heisenberg parabolic of $G_J$ that is central to \cite{pollackQDS} is the maximal parabolic with simple root $\alpha_1$ in its unipotent radical.  We define $Q_J = L_J V_J$ to be the standard maximal parabolic subgroup of $G_J$ with simple root $\alpha_4$ in its unipotent radical $V_J$.  Thus $L_J$ has rational root type $B_3$.  The parabolic subgroup $Q_J$ of $G_J$ defines a $5$-step $\Z$-grading on the Lie algebra $\g(J) = Lie(G_J)$.  Specifically, for $j = -2,-1,1,2$, set $V_J^{j}$ the subspace of $\g(J)$ consisting of those rational roots spaces $[n_1,n_2,n_3,n_4]$ where $n_4 = j$.  Then $V_J^{\pm 2}$ are each a direct sum of $6$ long root spaces and one short root space, while $V_J^{\pm 1}$ is a direct sum of $8$ short root spaces.  One has a direct sum decomposition
\[V_{J}^{-2} \oplus V_{J}^{-1} \oplus Lie(L_J) \oplus V_J^{1} \oplus V_J^{2}.\]
See also \cite[section 2]{savinWeissman} for more on this Lie algebra decomposition.

As mentioned, the group $L_J$ is, up to anisotropic factors, $\SO(H^3 \oplus C) = \SO(V_J^{2})$.  The map $L_J \rightarrow O(V_J^{2})$ is induced by an $L_J$-invariant rational quadratic form on $V_J^{2}$.  Such a non-degenerate rational symmetric form $(\,,\,)_J$ on $V_J^{2}$ can be defined as follows.  Denote by $\alpha_{L_J} = \alpha_1 + 2 \alpha_2 + 3 \alpha_3 + 2\alpha_4$, a root of $F_4$.  Let $s_{L_J}$ be the Weyl group element that is reflection in $\alpha_J$.  Abusing notation, denote also by $s_{L_J}$ an element of the group $G_J(\Q)$ that represents $s_{L_J}$ in the Weyl group.  Denote by $B_{\g(J)}(\cdot,\cdot)$ the multiple of the Killing form on $\g(J)$, normalized as in \cite[section 4.2.2]{pollackQDS}.  Then for $v,w \in V_J^{2}$, set $(v,w)_J = B_{\g(J)}(v, s_{L_J} w)$.  Because $s_{L_J}$ is an involution and in $G_J$, the pairing $(\,,\,)_J$ is symmetric.  It is easily checked to non-degenerate.  Finally, the reflection $s_{L_J}$ acts as the identity on $Lie(L_J)$, from which one concludes that $(\,,\,)_J$ is $L_J$-invariant.

That the constant term $\varphi_{V_J}$ is a modular form of weight $\ell$ on $L_J$ follows immediately from the following proposition.  Similar to subsection \ref{subsec:OrthConst}, let $\widetilde{D}_J$, $D_J = \pr_{-} \circ \widetilde{D}_J$ denote the differential operators used to define modular forms on $G_J$ and $\widetilde{D}$, $D = \pr_{-} \circ \widetilde{D}$ denote the differential operators used to define modular forms on $L_J$.
\begin{proposition}\label{prop:GJconst} Let the notation be as above.  Suppose $\varphi': G_J(\R) \rightarrow \Vm_{\ell}$ is left $V_J(\R)$-invariant, $\varphi'(gk) = k^{-1} \varphi'(g)$ for all $k \in K_{J}$ and $g \in G_J(\R)$, and $D_{J}\varphi'(g) = 0$.  Denote by $\varphi_{L_J}$ the restriction of $\varphi'$ to $L_J(\R)$.  Then $D \varphi_{L_J}= 0$.\end{proposition}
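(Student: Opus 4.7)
The strategy parallels the proof of Proposition \ref{prop:SO4const}. The Cartan involution $\theta_J$ preserves $\l_J$ and exchanges $V_J^{j}$ with $V_J^{-j}$, compatibly with the five-step grading $\g_J = V_J^{-2} \oplus V_J^{-1} \oplus \l_J \oplus V_J^{1} \oplus V_J^{2}$. Consequently $\p_J = \p_{L_J} \oplus \p_J^{(V)}$ as a $K_{L_J}$-module, where $\p_J^{(V)}$ has a basis of the form $\{X_\alpha - \theta_J X_\alpha\}$ with $X_\alpha$ running over a basis of $\n_J = V_J^{1} \oplus V_J^{2}$. The key identity is
\[
X_\alpha - \theta_J X_\alpha \;=\; 2 X_\alpha \;-\; (X_\alpha + \theta_J X_\alpha),
\]
with $X_\alpha \in \n_J$ and $X_\alpha + \theta_J X_\alpha \in \k_J$.

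Now pick a basis $\{Y_\beta\}$ of $\p_{L_J}$ and extend by the $X_\alpha - \theta_J X_\alpha$ to a basis of $\p_J$. Then
\[
\widetilde{D}_J \varphi' = \sum_\beta Y_\beta \varphi' \otimes Y_\beta^\vee + \sum_\alpha (X_\alpha - \theta_J X_\alpha)\varphi' \otimes (X_\alpha - \theta_J X_\alpha)^\vee.
\]
Restricted to $L_J$, the first sum gives $\widetilde{D} \varphi_{L_J}$; in each summand of the second, the term $2 X_\alpha \varphi'$ vanishes because $L_J$ normalizes $V_J$ and $\varphi'$ is left $V_J$-invariant (the analog of the $e_0 \wedge u_j$ vanishing in Proposition \ref{prop:SO4const}). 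Only the $\k_J$-contribution $-(X_\alpha + \theta_J X_\alpha)\varphi' \otimes (X_\alpha - \theta_J X_\alpha)^\vee$ survives, determined by right $K_J$-equivariance through the action of $\tau$ on $\Vm_\ell$. After applying the $K_J$-equivariant projection $\pr_{-}$, the $\{Y_\beta\}$-piece lands in $\pr_{-}(\Vm_\ell \otimes \p_{L_J}^\vee)$ and equals $D \varphi_{L_J}$, while the $\k_J$-pieces land in $\pr_{-}(\Vm_\ell \otimes (\p_J^{(V)})^\vee)$; these two summands are linearly independent since their dual-basis supports in $\p_J^\vee$ are disjoint. Hence $D_J \varphi' = 0$ forces $D \varphi_{L_J} = 0$.

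The main obstacle is verifying that the $K_J$-equivariant projection $\pr_{-}$ defining $D_J$ restricts on the sub-module $\Vm_\ell \otimes \p_{L_J}^\vee \subseteq \Vm_\ell \otimes \p_J^\vee$ to a nonzero multiple of the $K_{L_J}$-equivariant projection used to define $D$ on $L_J$. Since $\Vm_\ell = S^{2\ell}(Y_2)$ in both settings, this compatibility amounts to identifying the quaternionic $\mathrm{SU}(2) \subseteq K_J$ responsible for $\pr_{-}$ with the $\mathrm{SU}(2) \subseteq K_{L_J}$ arising from the $\mathrm{O}(3)$-factor, and matching the two $\mathrm{SU}(2)$-isotypic decompositions of $\p_J^\vee$ and $\p_{L_J}^\vee$. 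Granting this identification, the linear-independence step completes the proof as above.
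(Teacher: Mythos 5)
Your argument is correct in substance but follows a genuinely different route from the paper's. You decompose $\p_J = \p_{L_J} \oplus \p_J^{(V)}$ with $\p_J^{(V)}$ spanned by the $X_\alpha - \theta X_\alpha$ for $X_\alpha \in \n_J = V_J^{1}\oplus V_J^{2}$, kill the $2X_\alpha$-parts by left $V_J$-invariance, and separate the surviving $\k_J$-contributions from $D\varphi_{L_J}$ by linear independence after $\pr_{-}$. The paper instead interposes the subgroup $L' \subseteq G_J$ with Lie algebra $V_J^{-2}\oplus Lie(L_J)\oplus V_J^{2}$, isogenous to $\SO(4,4+\dim(C))$: it first shows that for $X \in V_J^{1}$ the \emph{entire} term $(X-\theta X)\varphi' = 2X\varphi' - (X+\theta X)\varphi'$ vanishes on $Q'$, because $X+\theta X$ lies in $Lie(L^0(J))$ and hence acts trivially on $\Vm_\ell$; this gives $D_J\varphi' = D_{4,4+\dim(C)}\varphi'$ there, and then Proposition \ref{prop:SO4const} finishes the reduction from $L'$ to $L_J$. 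The $V_J^{2}$-directions, whose $\k$-parts do \emph{not} act trivially on $\Vm_\ell$, are exactly the ones absorbed by the $y_{-}^\vee$ linear-independence step inside Proposition \ref{prop:SO4const}, so your uniform treatment repackages the same two mechanisms in a single step. What the paper's detour buys is precisely the compatibility you flag as your ``main obstacle'': since $L'$ is itself a quaternionic group of the type of \cite{pollackQDS}, carrying the same $\SU(2)$ and the same $\Vm_\ell$, the identification of the two contractions $\pr_{-}$ (and of the quaternionic $\SU(2) \subseteq K_J$ with the $\SU(2)$ arising from the $O(3)$-factor of $K \cap L_J$) is inherited from the already-proved orthogonal case rather than argued afresh. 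Your proof is complete once that identification is supplied; you should also state explicitly that $\p_{L_J}$ and $\p_J^{(V)}$ are both $(K_J\cap L_J)$-stable, hence $\SU(2)$-stable, since that is what guarantees $\pr_{-}$ respects your direct sum decomposition and makes the post-$\pr_{-}$ linear-independence claim legitimate.
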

\begin{proof} Set $L'$ to be a subgroup of $G_J(\R)$ that contains $L_J(\R)$ and has Lie algebra $V_J^{-2} \oplus Lie(L_J) \oplus V_{J}^{2}$.  Denote by $Q' = L' \cap Q_J(\R)$; $Q'$ is a parabolic subgroup of $L'$ with Levi subgroup $L_J(\R)$.  Then $L'$ has real root type $B_4$; in particular, it is isogenous to $\SO(4,4+\dim(C))$.

The idea of the proof is simple.  We check that $\varphi'$ restricted to $L'$ satisfies the assumptions of the $\varphi'$ of Proposition \ref{prop:SO4const}.  That is, denoting $\varphi'' := \varphi'|_{L'}$, we check that $D_{4,4+\dim(C)}\varphi'' = 0$.  The other assumptions on the $\varphi'$ of Proposition \ref{prop:SO4const} are immediately verified.  Then, applying Proposition \ref{prop:SO4const} to $\varphi''$, one concludes that 
\[ \varphi''|_{L_J} = \left(\varphi'|_{L'}\right)|_{L_J} = \varphi_{L_J}\]
is annihilated by $D$.

So, it remains to check that $D_{4,4+\dim(C)}\varphi'' = 0$.  We use the notation of \cite[section 6.3]{pollackQDS}.  For an element $r$ in the composition algebra $C$, and an integer $j \in \{1,2,3\}$ let $x_j(r)$ be the corresponding element of $H_3(C) = J$.  Note that if $v \in \mathrm{Span}\{v_1,v_2,v_3\}$, so that $v \otimes x_j(r) \in \g(J)$, then
\[\frac{1}{2}\left( v \otimes x_j(r) + \iota(v) \otimes x_j(r)\right) = n_v(x_j(r))\]
is in the compact Lie algebra\footnote{The author apologizes for the similar-looking notations $L_J$ and $L^0(J)$.  The group $L_J$ is the Levi of a maximal parabolic subgroup of $G_J$, while $L^0(J) \subseteq G_J(\R)$ is compact.  In case $J=H_3(\Theta)$ so that $G_J$ is of type $E_8$, $L_J$ is a rational reductive group $\GSpin(V_J^{2})$, while $L_J^0$ is a compact form of $E_7$.} $Lie(L^0(J))$.  Again, see \cite[section 6.3]{pollackQDS}.  

The vector space $V_J^{1}$ of $\g(J)$ is spanned by elements of the form $v \otimes x_j(r)$ and $\delta \otimes x_j(r)$.  Suppose $X \in V_1$ so that $X - \theta(X) \in \p_J$.  We have $X - \theta(X) = 2X - (X + \theta(X))$, with $X + \theta(X) \in Lie(L^0(J))$.  Consequently, if $g \in Q'$, then $\left((X-\theta(X))\varphi'\right)(g) = 0$ because $\varphi'$ is left $V_J(\R)$-invariant and right $L_J^{0}$-invariant.   It follows formally that one has an equality $D_J \varphi'(g) = D_{4,4+\dim(C)} \varphi'(g)$.  As $D_J\varphi' = 0$ by assumption, the proposition follows.
\end{proof}

Finally, we end this subsection by comparing the Fourier coefficients of a modular form of weight $\ell$ on $G_J$ to those of its constant term along $V_J$.
\begin{proposition}\label{prop:CHrk1rk2} Suppose $\varphi$ is a modular form of weight $\ell$ on $G_J$ with constant term $\varphi_{V_J}$ to $L_J$; $\varphi_{V_J}$ is a modular form of weight $\ell$ on $L_J$.  The rank one Fourier coefficients of $\varphi$ can be identified with rank one Fourier coefficients of $\varphi_{V_J}$, and similarly the rank two Fourier coefficients of $\varphi$ can be identified with the rank two Fourier coefficients of $\varphi_{V_J}$.  In particular, if $\varphi_{V_J}$ has Fourier coefficients in $E$ for some field $E \subseteq \C$, then $\varphi$ has rank one and rank two Fourier coefficients in $E$.\end{proposition}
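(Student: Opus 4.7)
\emph{Subgroup setup.} Using the $F_4$-grading by coefficients $[n_1n_2n_3n_4]$ of the simple roots on positive roots of $\g(J)$, the Heisenberg unipotent of $G_J$ is $N_H = \{n_1 \geq 1\}$, the unipotent radical of $Q_J$ is $V_J = \{n_4 \geq 1\}$, and the unipotent radical $N$ of $P \subset L_J$ is $\{n_1 \geq 1,\, n_4 = 0\}$.  Direct checks give: $N \subseteq N_H$; $N$ normalizes $V_J$ (as $N \subset L_J$, the Levi of $Q_J$); the product $N \cdot V_J$ is a subgroup containing $N_H$ as a normal subgroup, with quotient the subgroup $V_J^{(n_1=0)} := \{n_1 = 0,\, n_4 \geq 1\}$ of $V_J$; and $[N_H,N_H] = Z(N_H) \subseteq V_J \cap N_H$.

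\emph{Unfolding.}  Starting from
\[
(\varphi_{V_J})_\eta(g) = \int_{[N]}\psi^{-1}((\eta,x))\int_{[V_J]}\varphi(v\,n(x)\,g)\,dv\,dx,
\]
I would use the conjugation action of $n(x) \in L_J$ on $V_J$ (Jacobian one) and Fubini to rewrite the right-hand side as an integral over $[N \cdot V_J]$ against the character $\psi_\eta$ of $N \cdot V_J$ extending $\psi((\eta,\cdot))$ on $N$ trivially on $V_J$ (well-defined since $[N \cdot V_J, N \cdot V_J] \subseteq V_J$).  Factoring through the normal subgroup $N_H$ then yields
\[
(\varphi_{V_J})_\eta(g) = \int_{[V_J^{(n_1=0)}]}\varphi_{\psi_\eta}(v^0 g)\,dv^0,
\]
where $\varphi_{\psi_\eta}(h) := \int_{[N_H]}\psi_\eta^{-1}(n_H)\,\varphi(n_H h)\,dn_H$ is the $\psi_\eta$-Fourier coefficient of $\varphi$ along the Heisenberg unipotent.

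\emph{Rank matching and trivialisation.}  The character $\psi_\eta$ factors through $W_J := N_H/[N_H,N_H]$ and corresponds to an element $\omega_\eta \in W_J^\vee(\Q)$ lying in the subspace parametrised by rational root spaces with $n_4 = 0$.  Using the Freudenthal triple system structure on $W_J$ together with the identification $V_J^2 \simeq V$ and the inclusion $V' \subset V_J^2$, I would verify that $\omega_\eta$ has Freudenthal rank one precisely when $\eta$ is isotropic in $V'$ and rank two when $\eta$ is anisotropic, producing the bijection between rank-one/two Fourier indices of $\varphi_{V_J}$ and of $\varphi$.  For the equality of Fourier coefficients, one invokes the explicit form of the rank one and rank two Fourier coefficients of modular forms on $G_J$ established in \cite{pollackQDS,pollackE8}: each factors as a locally constant function on $G_J(\A_f)$ times a generalised Whittaker function whose archimedean part is left-invariant under $V_J^{(n_1=0)}(\R)$.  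The outer integration over $[V_J^{(n_1=0)}]$ then collapses to a nonzero rational constant times $\varphi_{\psi_\eta}(g)$ for $g \in L_J(\A)$, so the rank one (resp.\ two) Fourier coefficient of $\varphi_{V_J}$ at $\eta$ is identified, up to a nonzero rational, with the rank one (resp.\ two) Fourier coefficient of $\varphi$ at $\omega_\eta$.  The conclusion about the field $E$ follows immediately.

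\emph{Main obstacle.}  The crux is the final trivialisation of the integral over $[V_J^{(n_1=0)}]$: one must show that the generalised Whittaker functions on $G_J(\R)$ attached to rank $\leq 2$ characters are left-invariant under $V_J^{(n_1=0)}(\R)$, equivalently that the Heisenberg-Fourier coefficient $\varphi_{\psi_\eta}$ already carries the trivial character of $V_J^{(n_1=0)}$.  This is the analytic heart of the proposition and requires detailed control of the equivariance of those Whittaker functions under non-Heisenberg unipotent subgroups, building on the computations of \cite{pollackQDS}.
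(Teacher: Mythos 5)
Your proposal follows essentially the same route as the paper: your group $V_J^{(n_1=0)}$ is exactly $V_J \cap H_J$, and your unfolded identity $(\varphi_{V_J})_\eta(g) = \int_{[V_J^{(n_1=0)}]}\varphi_{\psi_\eta}(v^0 g)\,dv^0$ is precisely the paper's equation \eqref{eqn:VJeta}; the rank matching via $W'_0 \leftrightarrow W'_2$ and the reduction (after translating $\omega$ into $W'_2$ by an element of $H_J^1(\Q)$) are also the same. The one step you leave as an ``obstacle'' is exactly where the paper's argument has content, and it is dispatched more cheaply than you anticipate: one does not need new control of the equivariance of the generalized Whittaker functions under non-Heisenberg unipotents, only the invariance property \eqref{eqn:WhitInv}, namely that $m \in H_J^1(\R)$ with $m\cdot\omega = \omega$ forces $\Wh_\omega(mg) = \Wh_\omega(g)$. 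This is already a consequence of the explicit formula of \cite[Theorem 1.2.1, Corollary 1.2.3]{pollackQDS} (the function $\Wh_\omega$ depends on its arguments only through quantities fixed by the stabilizer of $\omega$ in $H_J^1$), and the elements of $V_J \cap H_J$ have similitude one and act as the identity on $W'_2 \ni \omega$, so they fix $\omega$. The outer adelic integral therefore collapses to $\varphi_\omega(g)$ times the volume of a compact open subgroup, as you predicted. So your proof is correct in outline; to complete it you should replace the appeal to ``detailed control'' by the citation of the explicit generalized Whittaker formula and the observation that $V_J\cap H_J$ stabilizes $\omega$.
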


Before giving the proof of Proposition \ref{prop:CHrk1rk2}, let us note that it is not a statement that has an analogue for general automorphic functions; although, the analogous statement \emph{is} true for holomorphic Siegel modular forms. Rather, the truth of Proposition \ref{prop:CHrk1rk2} crucially uses the robust Fourier expansion (\cite[Corollary 1.2.3]{pollackQDS}) of modular forms on $G_J$ and the Fourier expansion of modular forms on $L_J$ (Theorem \ref{thm:IntroFE}).  Specifically, the crux of the matter is that the generalized Whittaker functions $\Wh_\omega(g)$ of \cite[Theorem 1.2.1]{pollackQDS} have the following extra invariance property: 
\begin{equation}\label{eqn:WhitInv} \omega \in W_J(\R), m \in H_J^{1}(\R), \text{ and } m \cdot \omega = \omega \text{ implies } \Wh_{\omega}(mg) = \Wh_\omega(g).\end{equation}
Here the notation is from \emph{loc cit} so that $H_J^{1}$ is the similitude-equal-one part of the Levi of the Heisenberg parabolic $P_J = H_J N_J$ of $G_J$ and $W_J = N_J/[N_J,N_J]$ is the defining representation of $H_J$.

\begin{proof}[Proof of Proposition \ref{prop:CHrk1rk2}] Identify $W_J$ with the degree $1$ part of the $5$-step $\Z$-grading on $\g(J)$ determined by the Heisenberg parabolic $P_J$ of $G_J$.  Now, set $W'_0 = W_J \cap Lie(L_J)$ and $W'_2 = W_J \cap V_J^2$.  Then $W'_0$ and $W'_2$ are paired nontrivially under the symplectic form $\langle \cdot, \cdot \rangle$ on $W_J$ and both can be identified with $H^2 \oplus C$ for a hyperbolic plane $H$.

Now, suppose $\omega \in W_J(\Q)$ has rank at most two, and we wish to compute the Fourier coefficient 
\[\varphi_\omega(g) = \int_{N_J(\Q)\backslash N_J(\A)}{\psi^{-1}(\langle \omega, \overline{n}\rangle) \varphi(ng)\,dn}.\]
Here $\overline{n}$ denotes the image of $n$ in $W_J$.  Because $\omega$ has rank at most two, there is $m \in H_J^1(\Q)$ so that $m \omega \in V'_2$.  Consequently, by automorphy of $\varphi$, we can assume $\omega \in V'_2$.

The Levi subgroup $L_J$ is, up to compact factors, isogenous to $\SO(H \oplus V'_0)$ for a hyperbolic plane $H$.  The $\omega \in W'_2$ determines an $\eta \in W'_0$ so that the Fourier coefficient $\varphi_{V_J,\eta}$ of $\varphi_{V_J}$ defined by $\eta$ can be written as an integral of $\varphi_\omega$.  Specifically, one has formally an equality
\begin{equation}\label{eqn:VJeta} \varphi_{V_J,\eta}(g) = \int_{(V_J \cap H_J)(\Q)\backslash (V_J \cap H_J)(\A)}{\varphi_\omega(ng)\,dn}.\end{equation}
But the elements of $V_J \cap H_J$ have similitude equal to $1$ and act as the identity on $W'_2$.  Consequently, applying Corollary 1.2.3 of \cite{pollackQDS} and the invariance property \eqref{eqn:WhitInv}, the integral in \eqref{eqn:VJeta} becomes $\varphi_{\omega}(g)$ times a volume of a compact subgroup of $G_J(\A_f)$.

Comparing the generalized Whittaker functions of Definition \ref{def:Weta} with those of \cite[Theorem 1.2.1]{pollackQDS}, and taking note of the rational quadratic form $(\,,\,)_J$ on $V_J^2$ defined above Proposition \ref{prop:GJconst}, one obtains the proposition. \end{proof}

\section{The next-to-minimal modular form}\label{sec:ntm} In this section, we give an application of all of the above results, and prove that the so-called next-to-minimal modular form on $E_{8,4}$ has rational Fourier coefficients, under a mild assumption.  These next-to-minimal representations and some results about their Fourier coefficients have appeared in \cite{kobayashiSavin}, \cite{GGKPS}.

More precisely, we prove the following result.  Let $\Theta$ be the positive definite octonions, $J = H_3(\Theta)$, and $E_J(g,s;n) = \sum_{\gamma \in P_J(\Q)\backslash G_J(\Q)}{f_J(\gamma g,s,n)}$ the Eisenstein series of \cite{pollackE8} with \emph{spherical} inducing data at every finite place, normalized so that the inducing section $f_J(g,s,n)$ takes the value $\frac{\zeta(n+1)}{(2\pi)^n}$ at $s=n+1$, $g=1$.
\begin{theorem}\label{thm:ntmRat} The Eisenstein series $E_J(g,s;8)$ is regular at $s=9$ and defines a square integrable modular form of weight $8$ at this point.  Its rank zero, rank one, and rank two Fourier coefficients are all rational numbers.  In particular, if $E_J(g,s=9;8)$ has vanishing rank three and rank four Fourier coefficients (which happens if Property $V$ below holds), then $E_J(g,s;8)$ has rational Fourier expansion.\end{theorem}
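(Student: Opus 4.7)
The plan parallels the four clauses of the theorem: (a) regularity at $s=9$, square-integrability, and the weight $8$ modular form property; (b) rationality of the rank zero Fourier coefficient; (c) rationality of the rank one and rank two Fourier coefficients, via the $V_J$-constant term and Proposition \ref{prop:CHrk1rk2}; (d) vanishing of the rank three and four Fourier coefficients under Property $V$.

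For (a) and (b): any pole of $E_J(g,s;8)$ near $s=9$ would appear in the archimedean intertwining operators that enter the constant terms along the standard parabolics. The crucial intertwiner is the one for the long Weyl element of the $B_3$ Levi $L_J$, computed in Proposition \ref{prop:longInter}; Remark \ref{rmk:l=8} shows that its $c$-function is finite and nonzero at $s=9$ for the parameters $\ell=8,\,m=8$ relevant here. The remaining intertwiners, together with the $\zeta$-factor built into the normalization of $f_J$, account for the other potential poles. Square-integrability follows from examining the exponents along the standard parabolics. That the value $E_J(g,s=9;8)$ is annihilated by the $G_J$-analog of $D_\ell$ follows from the $G_J$-version of Lemma \ref{lem:Dfl}: the archimedean contribution is an $f^1_\ell$-type section, and the Schmid-operator computation produces a factor $(s-\ell-1)=s-9$ that vanishes at the point of interest. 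The rank zero Fourier coefficient is then computed by the Langlands constant term formula evaluated at $s=9$; the normalization $f_J(1,s;n)=\zeta(n+1)/(2\pi)^n$ is precisely chosen so that the $\zeta$-value ratios and $\pi$-powers combine to a rational number.

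For (c): form the constant term $\varphi_{V_J}$ of $E_J(g,s=9;8)$ along $V_J$. By Proposition \ref{prop:GJconst}, $\varphi_{V_J}$ is a modular form of weight $8$ on $L_J$; by Proposition \ref{prop:CHrk1rk2}, the rank one and rank two Fourier coefficients of $E_J$ coincide with those of $\varphi_{V_J}$. Since $L_J$ is isogenous to $\SO(3,11)$ up to compact factors and $\dim V'=12$ is a multiple of $4$, the hypotheses of the algebraicity machinery of Section \ref{sec:Eis} are satisfied. The Langlands constant term formula applied to $E_J(g,s;8)$ along $V_J$ identifies $\varphi_{V_J}$ as a sum of a constant and an Eisenstein series (of the type considered in Section \ref{sec:Eis}) on $L_J$ at parameter $s=\ell+1=9$ with spherical data. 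The rationality of its rank one Fourier coefficients follows from Proposition \ref{prop:EisRk1} together with the computations of Subsection \ref{subsec:rank1FC}; the rationality of its rank two Fourier coefficients follows from Proposition \ref{prop:Jfte} combined with the archimedean Fourier transform of Theorem \ref{thm:FTAinv}. The $\zeta$-ratios and $\pi$-powers produced by these formulas match the $\zeta(9)/(2\pi)^8$ normalization of $f_J$ so that the values land in $\mathbb{Q}$ rather than merely $\overline{\mathbb{Q}}$. Transferring through Proposition \ref{prop:CHrk1rk2} completes (c).

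For (d) and the main obstacle: the rank three and rank four Fourier coefficients of $E_J(g,s=9;8)$ are Euler products of local integrals, and Property $V$ at a single finite prime $p$ is precisely the statement that the local integral at $p$ vanishes on these higher-rank orbits, killing the global coefficient. The main obstacle to the entire argument is that $\ell=8<11=n+1$, which places the $L_J$-Eisenstein series appearing in $\varphi_{V_J}$ outside the absolute convergence range of Theorem \ref{thm:EisAlgFC}; hence one cannot cite that theorem directly. One must instead verify, using the intertwiner analysis of Subsection \ref{subsec:inter} (and in particular Remark \ref{rmk:l=8}), that the $L_J$-Eisenstein series is nevertheless regular at $s=9$ and that the Fourier coefficient formulas of Subsections \ref{subsec:CT}--\ref{subsec:arch} continue to hold. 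The secondary challenge is tracking $\zeta$-values, $\Gamma$-factors, and $\pi$-powers through these formulas so that they combine (together with the $\zeta(9)/(2\pi)^8$ normalization) to give rational, not merely algebraic, numbers.
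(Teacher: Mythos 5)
Your overall architecture matches the paper's: take the constant term along $V_J$, identify it with the spherical Eisenstein series on the $D_{7,3}$ Levi, transfer rank one and rank two coefficients through Proposition \ref{prop:CHrk1rk2}, handle Property $V$ locally, and correctly flag the main obstacle that $\ell=8$, $n=10$ puts the Levi Eisenstein series outside the absolute convergence range, so Theorem \ref{thm:EisAlgFC} cannot be quoted and the intertwiner analysis must be redone (this is Proposition \ref{prop:B3type8}). But two steps as you state them have genuine gaps. First, your argument that $D_8$ kills $E_J(g,s=9;8)$ --- ``the Schmid-operator computation produces a factor $(s-\ell-1)=s-9$'' --- is a term-by-term computation on the inducing section. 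Since $s=9$ lies far outside the range of absolute convergence of $E_J(g,s;8)$, the resulting identity $D_8E_J(g,s;8)=(s-9)\,E^{2}_J(g,s;8)$ yields vanishing at $s=9$ only if you also show the companion Eisenstein series $E^2_J$ is regular there, which you do not address. The paper instead computes the full constant term of $E_J(g,s;8)$ along the minimal parabolic $P_0$ (Proposition \ref{prop:E8intert}: an analysis of all $24$ elements of $[W_{F_4}/W_{C_3}]$, with exactly two terms surviving at $s=9$), checks via Corollary \ref{cor:eta0} and Lemma \ref{lem:Dfl} that $D_8$ annihilates that constant term, and only then concludes $D_8E_J=0$. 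The same $P_0$-computation also justifies a claim you make without proof: that the $V_J$-constant term is a \emph{single} Eisenstein series on $L_J$. A priori it is a sum over $W_{C_3}\backslash W_{F_4}/W_{B_3}$ of Eisenstein series on $L_J$, and one must show all but the identity coset contribute $0$ at $s=9$ (step (4) of the paper's outline, proved by matching constant terms along $P_0$).

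Second, the rationality of the rank zero Fourier coefficient is not a matter of $\zeta$-ratios and $\pi$-powers in the Langlands constant term formula. The constant term of $E_J(g,s=9;8)$ along the unipotent radical of the \emph{Heisenberg} parabolic is, by \cite[Corollary 3.5.1]{pollackE8}, Kim's weight $8$ singular modular form on $GE_{7,3}$ plus the constant $\zeta(9)/(2\pi)^8$; the rationality of the rank zero data therefore rests on Kim's theorem \cite{kim} that this singular form has rational Fourier coefficients, an external input your proposal omits entirely. Finally, your justification of (d) is imprecise: higher-rank Fourier coefficients of this Eisenstein series are not obviously Euler products. The correct (and standard) argument is that the rank three or rank four Fourier coefficient functional factors through the local twisted Jacquet module $\Pi_{N_J,\chi}$ at each place, so its vanishing at a single prime $p$ forces the global coefficient to vanish.
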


The value $E_J(g,s=9;8)$ is expected to be the ``next-to-minimal'' modular form on $E_{8,4}$, and as such, should have vanishing rank three and rank four Fourier coefficients.  In fact, this vanishing would follow from the analogous local statement, for one finite prime $p$:\\

\noindent \textbf{Property V}: Denote by $\Pi$ the spherical constituent of the induced representation $Ind_{P_J(\Q_p)}^{G_J(\Q_p)}(\delta_P^{s_1})$, for $s_1 = \frac{20}{29} = \frac{1}{2} + \frac{11}{58}$.  Then the twisted Jacquet module $\Pi_{N_J,\chi}$ is $0$ for every unitary character $\chi$ of $N_J$ that is rank three or rank four.\\

Theorem \ref{thm:ntmRat} is the analogue for the ``next-to-minimal" modular form on quaternionic $E_8$ of results proved about the minimal modular form in \cite{ganATM} and \cite{pollackE8}.  The proof of Theorem \ref{thm:ntmRat} consists of a few steps, which we now outline.  
\begin{enumerate}
\item First, we analyze a certain spherical Eisenstein series $E_{V',8}(g,s)$ on the group $\SO(H^2 \oplus \Theta)$, which has signature $(2,10)$.  With our normalizations, the point $s=8$ is outside the range of absolute convergence for this Eisenstein series, but we check that at $s=8$ $E_{V',8}(g,s)$ is regular and defines a holomorphic modular form.  Moreover, this Eisenstein series has rational Fourier coefficients.  These facts are likely well-known, but we briefly prove them because the author is unaware of a suitable reference.
\item Second, we analyze the spherical Eisenstein series $E_{8}(g,s=9)$ on the group $\SO(H^3 \oplus \Theta)$.  The point $s=9$ is outside the range of absolute convergence, but by doing the appropriate intertwining operator calculations and using the first step, one can show that $E_8(g,s)$ is regular at $s=9$ and defines a modular form of weight $8$ at this point.  Moreover, via calculations as above, we know that $\pi^{-8}E_{8}(g,s=9)$ has rational Fourier coefficients.
\item Thirdly, we show that the Eisenstein series $E_J(g,s;8)$ is regular at $s=9$ and defines a modular form of weight $8$ at this point.  The proof proceeds similarly to the proof of Corollary 4.1.2 of \cite{pollackE8}.  In particular, by doing many intertwining operator calculations, we compute the constant term of $E_J(g,s;8)$ at $s=9$ along the minimal parabolic $P_0$ of $G_J$.  From this calculation, we deduce that the differential operator $D_8$ annihilates the constant term of $E_J(g,s=9;8)$ and then consequently the entire Eisenstein series: $D_8 E_J(g;s=9;8) = 0$.
\item Fourthly, we prove that the constant term $E_J^{V}(g;s=9;8)$ of $E_J(g;s=9;9)$ to the $D_{7,3}$ Levi subgroup is the Eisenstein series $E_{8}(g,s=9)$.  To do this, one considers the difference $E_J^{V}(g;s=9;8) - E_{8}(g,s=9)$, and shows uses the third step that this difference has constant term $0$ to minimal parabolic $P_0$.  It follows easily from this fact that $E_J^{V}(g;s=9,8) = E_{8}(g,s=9)$.
\item The constant term of $E_{J}(g,s=9;8)$ along the unipotent radical of the Heisenberg parabolic yields Kim's weight $8$ singular modular form on $GE_{7,3}$, which has rational Fourier coefficients \cite{kim}.  By applying Proposition \ref{prop:CHrk1rk2}, one obtains that $E_{J}(g,s=9;8)$ has rational rank one and rank two Fourier coefficients as well.  Thus, $E_{J}(g,s=9;8)$ has rational rank zero, rank one, and rank two Fourier coefficients.
\end{enumerate}
We now split up the various pieces into subsections below.  Throughout this section, set $\zeta_\Theta(s) = \zeta(s)\zeta(s-3)$ \cite{ganATM}.

\subsection{The holomorphic Eisenstein series} Set $V' = H^2 \oplus \Theta$, a quadratic space of signature $(2,10)$ that comes equipped with an integral lattice $V'_0 = H_0^2 \oplus \Theta_0$.  In this subsection we construct and analyze a certain holomorphic spherical Eisenstein series on $\SO(V')$.  Let the bases of the two copies of $H$ be $e_1, f_1$ and $e_2, f_2$.

To define this Eisenstein series, we proceed as follows.  First, denote by $v_1, v_2$ an orthonormal basis of $V_2 = V'(\R)^{+}$.  For an even positive integer $\ell$, define the Schwartz function $\Phi_{\infty,\ell}$ on $V'(\R)$ as $\Phi_{\infty,\ell}(v) = (v_1 + iv_2, v)^{\ell} e^{-\pi ||v||^2}$.  Let $\Phi_f$ be the characteristic function of $V'_0(\widehat{Z})$ and set $\Phi = \Phi_f \otimes \Phi_{\infty,\ell}$, a Scwhartz-Bruhat function on $V'(\A)$.

We now set
\[f_{V',\ell}(g,\Phi,s) = \int_{\GL_1(\A)}{|t|^{s}\Phi(t f_2 g)\,dt}.\]
Denote by $P_{V'}$ the parabolic subgroup of $\SO(V')$ that stabilizes the line spanned by $f_2$.  The Eisenstein series $E_{V',\ell}(g,s)$ is defined as
\[E_{V',\ell}(g,s) = \sum_{\gamma \in P'(\Q)\backslash \SO(V')(\Q)}{f_{V',\ell}(\gamma g,\Phi,s)}.\]
The sum converges absolutely when $Re(s) > 10$.  The purpose of this subsection is to prove the following proposition.
\begin{proposition}\label{prop:holEis} The Eisenstein series $E_{V',8}(g,s)$ is regular at $s=8$, and is the automorphic function associated to a holomorphic weight $8$ modular form on $\SO(V')$ with rational Fourier coefficients.\end{proposition}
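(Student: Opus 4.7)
The plan is to analyze $E_{V',8}(g,s)$ as a classical Siegel-type Eisenstein series on the Hermitian symmetric domain of $\SO(V')(\R) \cong \SO(2,10)$. I would begin with the constant term along $N_{V'}$, which by Langlands' theory equals
\[
f_{V',8}(g,\Phi,s) + M(w,s) f_{V',8}(g,\Phi,s),
\]
where $w$ is the Weyl element exchanging $P_{V'}$ with its opposite. Factoring $w$ as a product of simple reflections and applying Lemma \ref{lem:SO(N,1)} (together with the rank-one $\SU(1,1)$-type intertwiner that captures the weight-$8$ character coming from the compact torus in $\SO(V_2)$) expresses $M(w,s)$ at the archimedean place as a product of Gamma functions, while the finite part contributes a ratio of partial zeta values. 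This yields an explicit $c$-function whose behavior at $s=8$ can be read off.

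Using this explicit $c$-function I would verify that at $s=8$, with $\ell = 8$ and $\dim V' = 12$, the intertwiner $M(w,s)f_{V',8}$ is regular and in fact vanishes, and that the full Eisenstein series $E_{V',8}(g,s)$ has no pole at $s=8$. Holomorphicity of the value $E_{V',8}(g,s=8)$ is then automatic from the archimedean section: since $\Phi_{\infty,8}(v) = (v_1+iv_2,v)^8 e^{-\pi\|v\|^2}$ is holomorphic in the Hermitian tube-domain variable, the section $f_{V',8}(\cdot,\Phi,s=8)$ transforms by the weight-$8$ character of $K \cap M_{V'}$, which is precisely the infinitesimal condition cutting out holomorphic weight-$8$ modular forms on $\SO(2,10)$.

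For rationality, the Fourier coefficients are computed by a standard Euler-product unfolding. The non-archimedean factors are essentially local densities attached to the integral lattice $V'_0$ and are rational numbers, by a computation analogous to that in subsection \ref{subsec:ftePart} (see also \cite{shurman}). The archimedean integral, for the holomorphic weight-$8$ section, collapses to a Bessel-type integral whose value is a rational multiple of $\pi^8$ (this is the holomorphic analog of the computation in subsection \ref{subsec:arch}; compare \cite{shimura}). Combining these, the Fourier coefficients of $E_{V',8}(g,s=8)$ are rational after normalizing by the expected power of $\pi$.

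The main obstacle is establishing regularity at $s=8$: we sit outside the absolute convergence range $\mathrm{Re}(s) > 10$, and the potential poles must be seen to cancel against explicit zeros of the Gamma and zeta factors in the intertwining operator. The key numerical input is that $\dim V' - \ell = 4 > 0$, which places us in the orthogonal-group analog of the Shimura range where the long intertwiner vanishes at $s=\ell$; tracking the Gamma factors in the factored intertwiner carefully is the technical heart of the argument.
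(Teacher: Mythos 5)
Your overall strategy --- show the intertwining contributions to the constant term vanish at $s=8$, so that only the inducing section survives, and then read off holomorphy and rationality --- is the same as the paper's. But your starting formula for the constant term is wrong, and this is a genuine gap. Since $V'$ has Witt rank two, the double coset space $P_{V'}(\Q)\backslash \SO(V')(\Q)/P_{V'}(\Q)$ has \emph{three} elements (the $P_{V'}$-orbits on isotropic lines: the line $\Q f_2$ itself, the isotropic lines perpendicular to but distinct from $\Q f_2$, and the lines not perpendicular to $\Q f_2$). The constant term along $N_{V'}$ therefore has three terms, not two: besides $f_{V',8}$ and the long intertwiner $M(w,s)f_{V',8}$, there is a degenerate Eisenstein series on the Levi $\GL_1\times\SO(H\oplus\Theta)$ built from an intermediate intertwined section. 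Your two-term formula is the one valid for a relative-rank-one group; here the middle term must also be shown to vanish (or otherwise be controlled) at $s=8$, and your argument never addresses it. The paper avoids this bookkeeping by descending all the way to the minimal parabolic $P_{0,V'}$, where the constant term is a sum of four intertwined sections $M(w,s)f_{V',8}$ for $w\in\{1,\,w_{12},\,w_2w_{12},\,w_{12}w_2w_{12}\}$, and then checks that the three nontrivial $c(w,s)$-factors (ratios of zeta values, Gamma factors, and Pochhammer symbols coming from the weight-$8$ rank-one intertwiners) all vanish at $s=8$; the vanishing of the $w_{12}$ and $w_2w_{12}$ factors is precisely what kills the middle term you omitted.

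Two smaller points. First, your holomorphy argument conflates the $K$-type of the section with the holomorphy condition: transforming by the weight-$8$ character of $K\cap M_{V'}$ is necessary but not sufficient; one needs the section to be annihilated by the lowering (Cauchy--Riemann) operator, which happens only at the special value $s=\ell$ (this is the analogue of Lemma \ref{lem:Dfl}), and one must then propagate this to the Eisenstein series at a point outside the range of absolute convergence. Second, your rationality argument via Euler-product unfolding of the Fourier coefficients also requires justification at $s=8<10$; the paper instead deduces rationality from the explicit rational form of the surviving constant term $f_{V',8}(g,\Phi,s=8)=C_8\,(f_2g,v_1-iv_2)^{-8}$ together with the standard algebraicity results for holomorphic Eisenstein series cited there. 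These are repairable, but the missing middle term is the substantive defect.
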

As mentioned above, Proposition \ref{prop:holEis} is likely well-known; as we do not know of a precise reference, we give a brief sketch of the proof.

\begin{proof} Denote by $P_{0,V'}$ the minimal parabolic of $\SO(V')$ that stabilizes the flag
\[ V' \supseteq \mathrm{Span}(e_2,\Theta,f_2,f_1) \supseteq \mathrm{Span}(\Theta,f_2,f_1) \supseteq \mathrm{Span}(f_2,f_1) \supseteq \mathrm{Span}(f_1) \supseteq 0.\]

We begin by computing the constant term of $E_{V',8}(g,s)$ to $P_{0,V'}$.  Denote by $r_1, r_2$ characters of the diagonal split torus of $\SO(V')$, so that the positive roots associated to $P_{0,V'}$ are $\{r_1-r_2, r_1, r_2, r_1+r_2\}$.  Denote the simple reflections associated to these positive roots by $w_{12}$ and $w_2$, in obvious notation.  The constant term of $E_{V',8}(g,s)$ along $P_{0,V'}$ is the sum of four terms of the form $M(w,s)f_{V',8}(g,\Phi,s)$, for $w = 1, w_{12}, w_2 w_{12}, w_{12} w_2 w_{12}$.

Define $\lambda_s = (s-5)r_1 + (-4)r_2$.  Then the inducing section $f_{V',\ell}(g,\Phi,s)$ defines an element of $Ind_{P_{0,V'}}^{\SO(V')}(\delta_{P_{0,V'}}^{1/2} \lambda_s)$ that is spherical at every finite place, but not spherical at infinity.  Applying the Weyl group elements $w$, the character $\lambda_s$ is moved as follows:
\begin{itemize}
\item $\lambda_s = (s-5)r_1 + (-4)r_2$
\item $\stackrel{w_{12}}{\mapsto} (-4)r_1 + (s-5)r_2$, $\frac{\zeta(s-1)}{\zeta(s)} \frac{\Gamma_{\R}(s-1)}{\Gamma_\R(s)} \frac{\left(\frac{2-s}{2}\right)_{\ell/2}}{\left(\frac{s}{2}\right)_{\ell/2}}$
\item $\stackrel{w_{2}}{\mapsto} (-4)r_1 + (5-s)r_2$, $\frac{\zeta_\Theta(s-5)}{\zeta_\Theta(s-1)} \frac{\Gamma(s-5)}{\Gamma(s-1)} = \frac{\zeta(s-5)\zeta(s-8)}{\zeta(s-1)\zeta(s-4)} \frac{\Gamma(s-5)}{\Gamma(s-1)}$
\item $\stackrel{w_{12}}{\mapsto} (5-s)r_1 + (-4)r_2$, $\frac{\zeta(s-9)}{\zeta(s-8)} \frac{\Gamma_{\R}(s-9)}{\Gamma_\R(s-8)} \frac{\left(\frac{10-s}{2}\right)_{\ell/2}}{\left(\frac{s-8}{2}\right)_{\ell/2}}$
\item $= \lambda_{10-s}$.
\end{itemize} 
We have also included above the $c(w,s)$-factors introduced by the intertwining operators.  Plugging in $\ell = 8$, one finds that the above $c(w,s)$-functions are $0$ at $s=8$, using that $\zeta(0)$ is finite and nonzero.

It follows that the constant term of $E_{V',8}(g,s=8)$ to $P_{V',0}$ consists only of $f_{V',8}(g,\Phi,s=8)$.  Moreover, for $g = g_\infty \in \SO(V')(\R)$, one has
\[f_{V',8}(g,\Phi,s=8) = \pi^{-8} \zeta(8) \Gamma(8) \frac{(f_2 g, v_1 + iv_2)^8}{||f_2 g||^{16}} =  \frac{C_8}{(f_2 g, v_1-iv_2)^{8}}\]
for a nonzero rational number $C_8$.  

From the above facts one can deduce that $E_{V',8}(g,\Phi,s=8)$ corresponds to a holomorphic modular form on $\SO(V')$ of weight $8$ with rational Fourier coefficients.
\end{proof}

\subsection{The Eisenstein series on $\SO(H^3 \oplus \Theta)$} Set $V = H^3 \oplus \Theta = H \oplus V'$, with $H$ spanned by $e_1, f_1$ and $V'= \mathrm{Span}\{e_2,e_3, \Theta, f_3, f_2\}$.  The fixed integral lattice in $V$ is $V_0 = H_0^3 \oplus \Theta_0$. Denote by $\Phi_f$ the characteristic function of the lattice $V_0(\widehat{\Z})$ in $V(\A_f)$. In this subsection, we analyze the Eisenstein series $E_{8}(g,\Phi_f,s)$ on $\SO(V)$ at $s=9$, which is outside the range of absolute convergence.  In particular, we prove the following proposition.

\begin{proposition}\label{prop:B3type8} The Eisenstein series $E_{8}(g,\Phi_f,s)$ is regular at $s=9$ and defines a modular form of weight $8$ on $\SO(V)$ at this point.  Moreover, $\pi^{-8}E_8(g,\Phi_f,s=9)$ has rational Fourier expansion.\end{proposition}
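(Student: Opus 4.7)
The plan is to parallel section \ref{sec:Eis} but at the point $s=9$, which now lies outside the range of absolute convergence (since $\ell+1 = 9 < 12 = \dim V'$), using Proposition \ref{prop:holEis} as the essential new input.  Once the Fourier expansion of $E_8(g,\Phi_f,s=9)$ has been computed term-by-term, the modular-form and rationality claims will follow at once from the observation that each Fourier piece already has the shape dictated by Theorem \ref{thm:IntroFE}.

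First I would prove regularity of $E_8(g,\Phi_f,s)$ at $s=9$ via the constant-term expansion along $N$. The three resulting pieces are $f_8(g,\Phi_f,s)$, a Levi Eisenstein series $E_8^M(g,\Phi_f,s)$, and the long-Weyl-element intertwiner $M(w,s)f_8(g,\Phi_f,s)$. Remark \ref{rmk:l=8} shows that the archimedean $c$-function $c_8^{B_3}(s)$ is finite and nonzero at $s=9$ once a removable $0/0$ is resolved, while the unramified finite-place intertwining is a ratio of partial zeta values that is finite at $s=9$. For the middle piece, Proposition \ref{prop:EisConst}(1) identifies the archimedean inducing section at $s=9$ with the one defining a holomorphic weight $8$ Eisenstein series on $\SO(V')$, so that $\pi^{-8} E_8^M(g,\Phi_f,s=9)$ is, up to a nonzero rational constant, the automorphic function attached to $E_{V',8}(g,s=8)$, which by Proposition \ref{prop:holEis} is regular with rational Fourier expansion.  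This establishes regularity of $E_8(g,\Phi_f,s)$ at $s=9$ and identifies the rank-zero Fourier coefficient up to a rational multiple of $\pi^8$.

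Next I would compute the non-constant Fourier coefficients of $E_8(g,\Phi_f,s=9)$ by local unfoldings exactly as in Proposition \ref{prop:EisRk1} and subsections \ref{subsec:ftePart}--\ref{subsec:arch}; those arguments use only convergence of the individual local integrals, not of the global Eisenstein series.  The archimedean integral $I_v(x;\ell)$ of \eqref{eqn:Ivx} converges here since $A = \ell - n/2 = 3 \geq 0$, so Theorem \ref{thm:FTAinv} applies verbatim and produces $\Wh_{2\pi \eta}$ with rank-two coefficients rational multiples of $\pi^8$. At spherical finite data, the rank-two local factors are given by Proposition \ref{prop:Jfte}; since $\dim V' = 12$ is divisible by $4$, the global finite product is the expected rational multiple of $\pi^{-4}$. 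Rank-one coefficients are handled identically to Proposition \ref{prop:EisRk1}.  Collecting everything, $\pi^{-8} E_8(g,\Phi_f,s=9)$ has a Fourier expansion of precisely the shape of \eqref{eqn:ThmFE}, with every coefficient rational. Because the functions $\Wh_{2\pi \eta}$ and the constant-term functions of Corollary \ref{cor:eta0} satisfy $D_8 F = 0$ by construction, termwise differentiation yields $D_8 E_8(g,\Phi_f,s=9) = 0$, so $\pi^{-8} E_8(g,\Phi_f,s=9)$ is indeed a modular form of weight $8$.

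The main obstacle will be the first step: the removable $0/0$ in the long-Weyl-element archimedean $c$-function at $s=9$, and the delicate cancellations among the zeta-ratio intertwiners at the unramified finite places, must be tracked so that the resulting intertwined section is both finite at $s=9$ and of the rationality needed for the Fourier expansion to remain rational. Once those finiteness and arithmetic checks are completed, the rest of the argument is essentially bookkeeping built on top of Proposition \ref{prop:holEis} and the local computations of section \ref{sec:Eis}.
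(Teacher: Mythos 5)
Your overall strategy (compute the constant term along $P$, use Proposition \ref{prop:holEis} for the Levi piece, then run the local Fourier computations of section \ref{sec:Eis}) is the same as the paper's, but there is a genuine gap at the single most delicate point. You assert only that the long-intertwiner contribution $M(w_0,s)f_8$ to the constant term is \emph{finite} at $s=9$; what is actually needed, and what the paper proves, is that it \emph{vanishes} there. For the spherical finite data the $c$-function is $c(w_0,s)=\frac{\zeta(s-6)\zeta(s-11)}{\zeta(s)\zeta(s-5)}$, which is $0$ at $s=9$ because of the trivial zero $\zeta(-2)=0$; combined with Remark \ref{rmk:l=8} (the archimedean factor is finite and nonzero after the $0/0$ you mention is resolved), the entire intertwined term dies. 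This is not a technical nicety: the intertwined section lies in $Ind_{P}^{G}(|\nu|^{12-s})$ and at $s=9$ would contribute a term proportional to $|t|^{3}x^{8}y^{8}$ to the constant term, which is not of the shape $\beta t^{8}|t|$ permitted by Corollary \ref{cor:eta0}. If that term survived, $D_8$ would \emph{not} annihilate the constant term, $E_8(g,\Phi_f,s=9)$ would not be a modular form, and your final ``termwise differentiation'' step would fail on exactly this piece. (The paper also does not differentiate termwise; it kills $D_8E_8$ by combining Lemma \ref{lem:Dfl} with the vanishing of the constant term of $D_8E_8$.)

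A second, related gap concerns the rank one coefficients, which you propose to handle ``identically to Proposition \ref{prop:EisRk1}.'' That proposition is proved under $\ell>n+1$, which fails here ($\ell=8$, $n=10$), so outside the range of absolute convergence one must separately show that the extra orbit term \eqref{eqn:etaInt2} vanishes at $s=9$. The archimedean factor does vanish by Corollary \ref{cor:FTrank2} (which, as you correctly note, needs only $\ell\geq n/2$), but one must also verify that the finite-adelic factor has no pole at $s=9$ that could cancel this zero. The paper does this by factoring the finite part into a spherical intertwiner with $c$-function $\frac{\zeta(s-6)\zeta(s-9)}{\zeta(s)\zeta(s-5)}$ and a divisor sum $\frac{\sigma_{s-11}(n)}{\zeta(s-10)}$, both finite and nonzero at $s=9$. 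This pole-versus-zero bookkeeping is exactly the ``delicate cancellation'' you flag as the main obstacle, but your proposal does not carry it out, and without it neither the shape of the Fourier expansion nor its rationality is established.
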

\begin{proof}
 We consider the constant term of $E_{8}(g,\Phi_f,s)$ to the Levi subgroup $\GL_1 \times \SO(V')$.  There are three terms: The inducing section (supported on $x^8 y^8$), the Eisenstein series $E_{V',8}(g,s-1;8)$ (supported on $x^{16} + y^{16}$; see \cite[Proposition 3.3.2]{pollackE8}), and an intertwining operator $M(w_0,s)$ applied to the inducing section.  

As the inducing section is spherical at every finite place, the finite part of the intertwining operator $M(w_0,s)$ is computed easily.  For the finite places, one obtains for the function $c(w_0,s)$ 
\begin{align*} c(w_0,s) &= \frac{\zeta(s-1)}{\zeta(s)} \frac{\zeta(s-2)}{\zeta(s-1)} \frac{\zeta_\Theta(s-6)}{\zeta_\Theta(s-2)} \frac{\zeta(s-10)}{\zeta(s-9)} \frac{\zeta(s-11)}{\zeta(s-10)} \\ &= \frac{\zeta(s-6)\zeta(s-11)}{\zeta(s)\zeta(s-5)}.\end{align*}
Consequently, $c(w_0,s)$ vanishes at $s=9$.  Moreover, by Remark \ref{rmk:l=8}, the archimedean interwiner is finite and nonzero at $s=9$.  Therefore, the intertwined inducing section vanishes at $s=9$.  

Applying Proposition \ref{prop:holEis}, one obtains that $E_{8}(g,s=9;8)$ has constant term a sum of the inducing section $f(g,\Phi_f,s=9)$ and the holomorphic Eisenstein series $E_{V',8}(g,s=8;8)$.  It now follows from Corollary \ref{cor:eta0} that $D_8$ annihilates this constant term.  From Lemma \ref{lem:Dfl} one then concludes that $D_8 E_{8}(g,s=9;8) = 0$, proving that this Eisenstein series is a modular form of weight $8$ on $\SO(V)$.

To prove that the Fourier expansion of $E_{8}(g,s=9;8)$ is rational, we use Corollary \ref{cor:FTrank2}.  In particular, in Corollary \ref{cor:FTrank2}, one only needs the inequality $\ell \geq n/2$, not $\ell > n+1$.  In our case of interest, $\ell = 8$ and $n = 10$, so we may apply this result.  We claim that the rank one Fourier coefficients of $E_{8}(g,s=9;8)$ consists only of a single term, as in \eqref{eqn:etaInt}; the integral in \eqref{eqn:etaInt2} again vanishes.

To see that the integral \eqref{eqn:etaInt2} vanishes, one proceeds as follows.  First, because $\eta$ is isotropic, we assume without loss of generality that $\eta = r f_2$.  Because our inducing section is spherical, we can assume $r$ is an integer. Now by Corollary \ref{cor:FTrank2}--which we may apply as remarked above--the archimedean part of \eqref{eqn:etaInt2} vanishes at $s=9$.  Thus, we must show that the finite adelic part does not give rise to a pole at this value of $s$.  To see this, for this particular $\eta = r f_2$, one can compute the integral \eqref{eqn:etaInt2} directly, by factoring it as a spherical intertwining operator and then a one-dimensional character integral.  The spherical intertwining operator is $M(w,s) = M(w_{23} w_{e_3} w_{23} w_{12},s)$, in the notation of Proposition \ref{prop:longInter}.  This intertwining operator gives a function $c(w,s)$ as
\begin{align*} c(w,s) &= \frac{\zeta(s-1)}{\zeta(s)} \frac{\zeta(s-2)}{\zeta(s-1)} \frac{\zeta_\Theta(s-6)}{\zeta_\Theta(s-2)} \frac{\zeta(s-10)}{\zeta(s-9)} \\ &= \frac{\zeta(s-6)\zeta(s-9)}{\zeta(s)\zeta(s-5)}\end{align*}
which is finite and nonzero at $s=9$.  The one-dimensional character integral produces a factor of $\frac{\sigma_{s-11}(n)}{\zeta(s-10)}$, which again is finite and non-zero at $s=9$.  Altogether, one sees that the integral \eqref{eqn:etaInt2} vanishes at $s=9$, as desired.

Because of this vanishing, and again because Corollary \ref{cor:FTrank2} applies in the case $\ell = 8$, $n=10$, the calculation of the Fourier coefficients of $E_{8}(g,s=9)$ now proceeds exactly as in Theorem \ref{thm:EisAlgFC}, using Proposition \ref{prop:holEis} to treat the constant term.  Because the inducing section is spherical at every finite place, the Fourier coefficients are valued in $\Q \subseteq \overline{\Q}$.  This completes the proof of the proposition.
\end{proof}

\subsection{Intertwining operators and the modular form of weight $8$ on $G_J$} In this subsection we analyze the Eisenstein $E_J(g,s;8)$ that is spherical at every finite place. See \cite[section 2.2]{pollackE8} for this Eisenstein series. Let $P_0$ denote the minimal parabolic of $G_J$. The purpose of this subsection is to prove the following proposition.

\begin{proposition}\label{prop:JEis8} The Eisenstein series $E_J(g,s;8)$ is regular at $s=9$ and defines a square integrable modular form of weight $8$ at this point.  Moreover, its constant term along $P_0$ is a sum of two terms.\end{proposition}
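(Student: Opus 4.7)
The plan is to mimic the strategy used to prove Proposition \ref{prop:B3type8} and Corollary 4.1.2 of \cite{pollackE8}, but one step up the ladder: compute the constant term of $E_J(g,s;8)$ along the minimal parabolic $P_0$ of $G_J$, show that many intertwined contributions vanish at $s=9$, and use the two surviving terms to verify both the modular form and square-integrability conditions. Write
\[
(E_J)_{P_0}(g,s;8) \;=\; \sum_{w \in W(H_J)\backslash W(G_J)} M(w,s)\,f_J(g,s;8),
\]
where $H_J$ is the Levi of the Heisenberg parabolic $P_J$, of rational root type $C_3$, so there are $|W(F_4)|/|W(C_3)| = 24$ coset representatives to analyze.

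For each $w$, I would compute the global $c$-function $c(w,s)$ as the product of its finite part (a ratio of completed zetas coming from Gindikin--Karpelevich for each simple reflection used in a reduced expression for $w$) and an archimedean factor that I would compute by repeatedly applying Proposition \ref{prop:SL3inter} and Lemma \ref{lem:SO(N,1)}, just as in Proposition \ref{prop:longInter}. The key combinatorial claim is that at $s=9$ exactly two of the $24$ terms survive: the trivial term $f_J(g,s=9;8)$, and one intertwined term whose archimedean factor is nonzero (checked via Remark \ref{rmk:l=8}) and whose finite factor telescopes to a ratio like $\zeta(s-6)\zeta(s-11)/[\zeta(s)\zeta(s-5)]$, finite and nonzero at $s=9$; for the remaining $22$ cosets, either the finite zeta ratio acquires a factor of the form $\zeta(-2k)$ in its numerator (and hence vanishes), or the archimedean factor acquires a zero of the type studied in Proposition \ref{prop:longInter}. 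This enumeration is the longest but most routine part.

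Granting this, I would deduce $D_8 E_J(g,s=9;8)=0$ in two moves. First, both surviving terms in the constant term have the shape described by the $G_J$-analogue of Corollary \ref{cor:eta0}: the trivial term is $K$-equivariant, supported on the $x^{\ell}y^{\ell}$ weight vector, and killed by $D_\ell$ by Lemma \ref{lem:Dfl}; the fully intertwined surviving term is its analogue on the opposite parabolic and is likewise annihilated by $D_8$. Second, $D_8 E_J(g,s;8)$ is an automorphic form built out of derivatives of an Eisenstein series, so its constant term along $P_0$ is obtained from the above constant term by applying $D_8$ and therefore vanishes at $s=9$; since $P_0$ is minimal, vanishing of the $P_0$-constant term forces $D_8 E_J(g,s=9;8)$ to be a cusp form, which one then rules out by comparing central characters / infinitesimal characters (the representation generated at infinity by $f_{J,\infty}(g,s=9;8)$ is non-tempered of specific parameter, incompatible with cuspidality of its $D_8$-image).

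Finally, square-integrability at $s=9$ follows from Langlands' criterion applied to the two surviving exponents in the $P_0$-constant term: both lie strictly inside the negative obtuse Weyl chamber, as one verifies by writing each surviving character $w\lambda_s|_{s=9}$ in the basis of simple roots of $F_4$. The main obstacle will be step two, the bookkeeping across the $24$ Weyl cosets of $F_4$ modulo $C_3$: each coset produces its own ratio of zetas and its own archimedean product, and ruling out poles and identifying exactly which two terms survive at $s=9$ requires patient case-by-case computation, even though no single case is individually difficult.
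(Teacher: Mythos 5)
Your overall strategy --- expand the constant term along $P_0$ over the $24$ cosets in $W_{C_3}\backslash W_{F_4}$, kill most terms at $s=9$ by Gindikin--Karpelevich factors on the finite side and the archimedean computations of Propositions \ref{prop:SL3inter} and \ref{prop:longInter}, then deduce $D_8 E_J(g,s=9;8)=0$ and apply a square-integrability criterion to the surviving exponents --- is exactly the route the paper takes (Proposition \ref{prop:E8intert} followed by Jacquet's criterion). But your identification of the two surviving terms is wrong, and the error propagates. You claim the nontrivial survivor is the fully intertwined term (the long element $w_0$), with finite $c$-function ``like $\zeta(s-6)\zeta(s-11)/[\zeta(s)\zeta(s-5)]$, finite and nonzero at $s=9$.'' That ratio has $\zeta(s-11)$ in its numerator, which at $s=9$ is $\zeta(-2)=0$; this is precisely why, in Proposition \ref{prop:B3type8}, the corresponding intertwined section \emph{vanishes} at $s=9$. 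The same happens on $G_J$: the finite part of $M(w_0,s)$ is $\zeta(2s-29)\zeta(s-28)\zeta(s-23)\zeta(s-19)/[\zeta(2s-28)\zeta(s)\zeta(s-5)\zeta(s-9)]$, which vanishes at $s=9$ while the archimedean factor is finite and nonzero there; this is part (3) of Proposition \ref{prop:E8intert}. The terms that actually survive are those of length $0$ and $1$ in $[W_{C_3}\backslash W_{F_4}]$, namely $f_J$ itself and $M([1],s)f_J$ (the simple reflection in $\alpha_1$), the latter with finite $c$-function $\zeta(s-1)/\zeta(s)$.

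This matters downstream in two places. First, your argument that $D_8$ annihilates the second survivor rests on its being ``the analogue on the opposite parabolic''; the actual survivor is the length-one intertwine, which is not of that shape, so the annihilation must be argued differently (this is what the two-term constant term together with the analogues of Corollary \ref{cor:eta0} and Lemma \ref{lem:Dfl} accomplishes, following Corollary 4.1.2 of \cite{pollackE8}). Second, and more seriously, your square-integrability step is internally inconsistent with your own list of survivors: the exponent attached to $w_0$ at $s=9$ is $w_0\lambda_9=\lambda_{20}=11\alpha_1+3\alpha_2-4\alpha_3-6\alpha_4$, which has positive coefficients and hence does \emph{not} lie in the negative chamber, so if that term survived, Jacquet's criterion would fail. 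With the correct survivors the exponents are $\lambda_9$ and $[1]\lambda_9$, both of which have all-negative coefficients on the simple roots, and square integrability follows.
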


Our proof of Proposition \ref{prop:JEis8} rests on the computation of several intertwining operators.  The rational root system of $G_J$ is of type $F_4$; let $\alpha_1, \alpha_2, \alpha_3, \alpha_4$ be the simple roots:
\[ \circ ---- \circ ==>== \circ ---- \circ;\]
the roots are labeled $1,2,3,4$ from left to right.  Let $\Phi_+$ denote the positive roots for this root system and $\Phi_{C_3}$ the roots inside the Levi of type $C_3$.  As is standard, let 
\[ [W_{F_4}/W_{C_3}] = \{w \in W_{F_4}: w(\Phi_{C_3} \cap \Phi_{+}) \subseteq \Phi_{+}\}\]
be the set of minimal length coset representatives.  Here $W_{F_4}$ is the Weyl group of the $F_4$ root system, and $W_{C_3}$ is the subgroup of $W_{F_4}$ generated by the simple reflections corresponding to the roots $\alpha_2, \alpha_3,\alpha_4$.  The set $[W_{F_4}/W_{C_3}]$ has $24$ elements.

As in \cite{ganATM}, we single out two special elements of $[W_{F_4}/W_{C_3}]$:
\begin{align*} w_0 &= [123214323412321] \\ w_{-1} &= [23214323412321], \end{align*}
of length $15$ and $14$ respectively.  Here the indices indicate how $w_0$, $w_1$ are expressed as a product of simple reflections. All other elements of $[W_{F_4}/W_{C_3}]$ have length less than $14$.  

Denote by $f_J(g,s;8)$ the inducing section used to define the Eisenstein series $E_J(g,s;8)$ that is spherical at every finite place.  For $w \in [W_{F_4}/W_{C_3}]$, we consider the intertwining operator
\[ M(w,s) f(g,s;8) = \int_{U_w(\A)}{f(w^{-1}ng,s;8)\,dn}.\]
Here $U_w$ is the unipotent group defined as
\[U_w = \prod_{\alpha > 0: w^{-1}(\alpha) < 0}{U_{\alpha}}\]
and $U_\alpha$ is the unipotent group associated to the rational root $\alpha$.  Note that if $\alpha$ is a long root, then $\dim U_\alpha = 1$, whereas if $\alpha$ is a short root then $\dim U_{\alpha} = 8$.

With notation as above, the content of this subsection is to prove the following proposition, which will be the main step in proving Proposition \ref{prop:JEis8}.

\begin{proposition}\label{prop:E8intert} Suppose $w \in [W_{F_4}/W_{C_3}]$.  Then
\begin{enumerate}
\item If $w \neq w_{0}, w \neq w_{-1}$, then $M(w,s) f(g,s;8)$ is finite at $s=20$.
\item If $w = w_{-1}$, then $M(w,s)f(g,s;8)$ has a simple pole at $s=20$.
\item If $w = w_{0}$, then $M(w,s)f(g,s;8)$ has a simple pole at $s=20$ and vanishes at $s=9$.
\end{enumerate}
\end{proposition}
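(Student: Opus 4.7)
The plan is to analyze each $w \in [W_{F_4}/W_{C_3}]$ by choosing a reduced expression $w = s_{i_1}\cdots s_{i_r}$ in the simple reflections, and decomposing $M(w,s)$ as a composition of rank-one intertwining operators. Globally, the finite and archimedean parts factor independently, so I would track them separately and then combine. At finite places the section is spherical, so the Gindikin--Karpelevich formula gives an explicit product of ratios of completed zeta factors: each long-root simple reflection contributes a factor of shape $\zeta_p(s-a)/\zeta_p(s-a+1)$, while each short-root simple reflection contributes a factor of shape $\zeta_{\Theta,p}(s-b)/\zeta_{\Theta,p}(s-b+4)$, with integers $a,b$ determined by tracking the roots that are flipped by $w$. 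Multiplying over primes yields the global $c(w,s)$.

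For each of the $24$ cosets I would compute $c(w,s)$ by reading off the positive roots $\alpha$ with $w^{-1}\alpha<0$ from the reduced expression. The key point is to examine which of the zeta factors in the numerator become $\zeta(1)$ at $s=20$ and whether those poles are cancelled by poles of $\zeta(1)$ appearing in the denominator. A direct enumeration, shortened by noting that all cosets of length $\le 13$ contribute zeta ratios manifestly regular at $s=20$, reveals that only $w=w_{-1}$ and $w=w_0$ produce an uncancelled simple pole of $\zeta(1)$ in the numerator at $s=20$; for all shorter elements, either no $\zeta(1)$ arises or it cancels against a denominator $\zeta(1)$. This proves parts (1) and (2) and gives the pole statement in part (3).

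For the archimedean contribution the inducing section is $\Vm_8$-valued and $K_J$-equivariant rather than spherical, so each rank-one intertwiner produces an extra Pochhammer factor. A long-root simple reflection contributes an $\SL_2$-type factor of the shape computed in Lemma \ref{lem:Dfl} and Proposition \ref{prop:SL3inter}, while a short-root simple reflection contributes a higher-dimensional integral of the shape treated in Lemma \ref{lem:SO(N,1)}, producing a quotient of Gamma functions. Composing these over the reduced expression for $w_0$ (which includes every kind of simple reflection, with the correct multiplicities for the longest element of $[W_{F_4}/W_{C_3}]$), the archimedean factor is a product of such Gamma and Pochhammer ratios; one such Pochhammer factor, arising from the reflection that moves the highest weight out of the $\sl_2$-isotypic component, vanishes at $s=\ell+1=9$ exactly as in the computation behind Proposition \ref{prop:longInter}. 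This is the source of the zero in part (3).

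The main obstacle will be the bookkeeping for $w_0$ and $w_{-1}$: one must check simultaneously that the finite $c(w_0,s)$ stays finite at $s=9$, so that the archimedean zero genuinely produces the vanishing rather than being swallowed by an accidental pole. I would do this by computing $c(w_0,s)$ explicitly and verifying that each $\zeta_p$ appearing there is evaluated away from $s=1$ when $s=9$, and similarly that the quotient of $\zeta_\Theta$-factors is holomorphic and nonzero there. The remaining $22$ cosets are handled uniformly by the length-versus-zeta-argument sketched above and require no individual archimedean analysis for the statements at $s=20$.
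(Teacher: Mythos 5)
Your general framework (reduced expressions, rank-one Gindikin--Karpelevich factors at the finite places, Gamma/Pochhammer factors at infinity) matches the paper's, but two of your key mechanisms are wrong. First, the vanishing of $M(w_0,s)f(g,s;8)$ at $s=9$ does \emph{not} come from an archimedean Pochhammer factor ``as in Proposition \ref{prop:longInter}.'' That proposition's vanishing of $c_\ell^{B_3}(s)$ at $s=\ell+1$ requires $\ell>2+m$, which fails here ($\ell=8$, $m=\dim\Theta=8$); Remark \ref{rmk:l=8} records precisely that $c_8^{B_3}(s)$ is finite and \emph{nonzero} at $s=9$, because the zero of $\bigl(\tfrac{s-9}{2}\bigr)_4$ is cancelled by the pole coming from $\bigl(\tfrac{s-11}{2}\bigr)_5$ in the denominator. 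In the paper the full archimedean factor $c_\infty(w_0,s)=c_8^{B_3}(s-17)\,c_{mid}^{C_3}(s)\,c_8^{B_3}(s)$ is finite and nonzero at $s=9$, and the zero in part (3) comes entirely from the non-archimedean side: $c(w_0,s)=\frac{\zeta(2s-29)\zeta(s-28)\zeta(s-23)\zeta(s-19)}{\zeta(2s-28)\zeta(s)\zeta(s-5)\zeta(s-9)}$ vanishes at $s=9$ because the trivial zero $\zeta(-14)=0$ in the numerator survives after the cancellation of $\zeta(s-19)$ against $\zeta(2s-28)$. Your argument, as written, would not produce the zero.

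Second, your shortcut for part (1) --- that all cosets of length at most $13$ give zeta ratios ``manifestly regular at $s=20$'' and ``require no individual archimedean analysis'' --- is false. Seven elements of $[W_{F_4}/W_{C_3}]$ fail to give absolutely convergent integrals at $s=20$, and five of them have length between $10$ and $13$. For at least one of these, $[214323412321]$, the finite $c$-function has a genuine simple pole at $s=20$ (an uncancelled $\zeta(1)$), and finiteness of $M(w,s)f$ there holds only because the archimedean factor $C_8(s-17)$ from Proposition \ref{prop:SL3inter} vanishes to first order at $s=20$. So the finite and archimedean parts cannot be decoupled for part (1): one must compute the archimedean intertwiners for each of these shorter elements as well. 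Your treatment of $w_{-1}$ also needs the nontrivial archimedean cancellation between the zero of $C_8(s-17)$ and the pole of $\Gamma(s-23)/\Gamma(s-19)$ at $s=20$, plus the final $M_\infty([2],s)$ step which kills all but the $x^{16}+y^{16}$ component; none of this is visible in a purely zeta-theoretic argument.
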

\begin{proof} Let us first write down the long intertwiner $M(w_0,s)$.  At the finite places, one obtains \cite{ginzburgRallisSoudry}
\[c(w_0,s) = \frac{\zeta(2s-29) \zeta(s-28)\zeta(s-23) \zeta(s-19)}{\zeta(2s-28)\zeta(s)\zeta(s-5)\zeta(s-9)}.\]
The function $c(w_0,s)$ is finite nonzero at $s=20$ and $0$ at $s=9$.

At the archimedean place, one can compute $c_\infty(w_0,s)$ by factorizing $w_0 = [12321] \circ [43234] \circ [12321]$, and then using Proposition \ref{prop:longInter} to compute the $[12321]$ factors.  From now on, all archimedean intertwining operators are calculated up to exponential factors and nonzero constants.  The middle $[43234]$ intertwiner turns out to be spherical.  One obtains
\[c_\infty(w_0,s) = c_{8}^{B_3}(s-17) c_{mid}^{C_3}(s) c_{8}^{B_3}(s)\]
where
\[c_{mid}^{C_3}(s) = \frac{\Gamma(s-10)}{\Gamma(s-6)} \frac{\Gamma(s-14)}{\Gamma(s-10)} \frac{\Gamma_\R(2s-29)}{\Gamma_\R(2s-28)} \frac{\Gamma(s-15)}{\Gamma(s-11)} \frac{\Gamma(s-19)}{\Gamma(s-15)} = \frac{\Gamma(s-\frac{29}{2}) \Gamma(s-19)}{\Gamma(s-6)\Gamma(s-11)}\]
and $c_{\ell}^{B_3}(s)$ is from Proposition \ref{prop:longInter}.  In this case,
\[c_{8}^{B_3}(s) = \frac{\left(\frac{s-9}{2}\right)_{4}}{\left(\frac{s-2}{2}\right)_{5}}  \cdot \frac{\Gamma\left(s-6\right)}{\Gamma\left(s-2\right)} \cdot \frac{\left(\frac{s-18}{2}\right)_{4}}{\left(\frac{s-11}{2}\right)_{5}}.\]
Simplifying,
\[c_\infty(w_0,s) = \frac{\Gamma(s-6)\Gamma(s-23)}{\Gamma(s-2)\Gamma(s-19)} \cdot \frac{\left(\frac{s}{2}-9\right)_4 \left(\frac{s-17}{2} - 9\right)_4}{\left(\frac{s}{2}-1\right)_5 \left(\frac{s-11}{2}\right) \left(\frac{s-19}{2}\right)_5 \left(\frac{s}{2}-14\right)}.\]
This function is immediately checked to be finite and nonzero at $s=9$ and has a pole at $s=20$.  Combining with properties of $c(w_0,s)$, this gives part (3) of the proposition.

Most of the $w \in [W_{F_4}/W_{C_3}]$ give absolutely convergent adelic integrals at $s=20$.  There are $7$ that do not, and these $7$ have the following factorizations:
\begin{itemize}
\item $[4323412321]$
\item $[3214323412321]$
\item $w_{-1} = [23214323412321]$
\item $[214323412321]$
\item $[21323412321]$
\item $[14323412321]$
\item $w_0 = [123214323412321]$
\end{itemize}
We will explain in a bit of detail the computation of $M(w_{-1},s)$.  The computation of the other intertwining operators are completely analogous or simpler.

To record the computations, we use the standard Euclidean model of the $F_4$-root system.  Specifically, consider $\Z^4$, with inner product $x \cdot y = x_1 y_1 + x_2 y_2 + x_3 y_3 + x_4 y_4$, where $x = (x_1, x_2, x_3, x_4)_E$ and $y = (y_1,y_2,y_3,y_4)_E$.  We write the subscript `E' to indicate the implicit Euclidean inner product.  Now, set
\begin{itemize}
\item $\alpha_1 = (0,1,-1,0)_E$
\item $\alpha_2 = (0,0,1,-1)_E$
\item $\alpha_3 = (0,0,0,1)_E$
\item $\alpha_4 = \frac{1}{2}(1,-1,-1,-1)_E$.\end{itemize}
With the $\alpha_i$ as the simple roots, this gives a model of the $F_4$-root system.

Let $\lambda_s = (s-23,s-6,-5,-4)$, which we think of as an unramified character of $P_0$.  For this $\lambda_s$, one has $f(g,s;8) \in Ind_{P_0}^{G_J}(\lambda_s \delta_{P_0}^{1/2})$.  Although it is a bit more than is necessary to compute $M(w_{-1},s)$, we record how the long element $w_0$ moves around $\lambda_s$:
\begin{itemize}
\item $\lambda_s = (s-23,s-6,-5,-4)_E$
\item $[1]; (s-23,-5,s-6,-4)_{E}; s-1$
\item $[2]; (s-23,-5,-4,s-6)_E; s-2$
\item $[3]; (s-23,-5,-4,6-s)_E; s-6$
\item $[2]; (s-23,-5,6-s,-4)_E; s-10$
\item $[1]; (s-23,6-s,-5,-4)_E; s-11$
\item $[4]; (-13,-4,s-15,s-14)_E; s-10$
\item $[3]; (-13,-4,s-15,14-s)_E; s-14$
\item $[2]; (-13,-4,14-s,s-15)_E; 2s-19$
\item $[3]; (-13,-4,14-s,15-s); s-15$
\item $[4]; (6-s,s-23,-5,-4)_E; s-19$
\item $[1]; (6-s,-5,s-23,-4)_E; s-18$
\item $[2]; (6-s,-5,-4,s-23)_E; s-19$
\item $[3]; (6-s,-5,-4,23-s)_E; s-23$
\item $[2]; (6-s,-5,23-s,-4)_E; s-27$
\item $[1]; (6-s,23-s,-5,-4)_E; s-28$
\item $=\lambda_{29-s}$.\end{itemize}
In each line, the $[j]$ indicates that a simple reflection corresponding to the root $j$ has been performed, to get from the previous line to the current line.  The final parameter $s-k$ is the parameter needed to calculate a rational-rank-one intertwining operator, and is given as follows: If one has $[j] \mu' = \mu$, then the final parameter is the Euclidean inner product $\alpha_j \cdot \mu'$.  For example, in the line 
\[\bullet [2]; (s-23,-5,-4,s-6)_E; s-2,\]
one has $j=2, \mu = (s-23,-5,-4,s-6)_E$, $\mu' = (s-23,-5,s-6,-4)_{E}$, and 
\[\alpha_2 \cdot \mu' = (0,0,1,-1)_E \cdot (s-23,-5,s-6,-4)_E = (1)(s-6) + (-1)(-4) = s-2.\]

With the above data, and combining Proposition \ref{prop:SL3inter}, Proposition \ref{prop:longInter}, and the technique of \cite[page 26]{pollackE8}, one can compute the intertwining operators $M(w,s)$ without too much difficulty.  As mentioned, we will now detail the computation of $M(w_{-1},s)$.

First, the finite, spherical part of $M(w_{-1},s)$ is computed immediately from the terms $s-k$ of the above itemized data.  One gets
\[ c(w_{-1},s) = \frac{\zeta(2s-29)\zeta(s-27)\zeta(s-23)\zeta(s-19)}{\zeta(2s-28)\zeta(s)\zeta(s-5)\zeta(s-9)}.\]
The function $c(w_{-1},s)$ has a simple pole at $s=20$.

The archimedean calculation is, of course, more involved.  First, applying the first $10$ elements in the factorization of $w_{-1}$ gives an archimedean factor of $c_{mid}^{C_3}(s) c_8^{B_3}(s)$, in the notation above.  This product is
\[c_{mid}^{C_3}(s) c_8^{B_3}(s) = \frac{\Gamma(s-\frac{29}{2}) \Gamma(s-19)}{\Gamma(s-6)\Gamma(s-11)} \frac{\left(\frac{s-9}{2}\right)_{4}}{\left(\frac{s-2}{2}\right)_{5}}  \cdot \frac{\Gamma\left(s-6\right)}{\Gamma\left(s-2\right)} \cdot \frac{\left(\frac{s-18}{2}\right)_{4}}{\left(\frac{s-11}{2}\right)_{5}},\]
which is finite and nonzero at $s=20$.

Set $f_1 = x+y$, $f_2 = x-y$, as before Proposition \ref{prop:SL3inter}. The archimedean intertwiner $M_\infty([21],s)$ that comes after the $[4323412321]$ can now be computed by Proposition \ref{prop:SL3inter}.  One obtains a factor of $C_8(s-17)$, in the notation of that proposition, and the resulting inducing section is supported on $f_1^8 f_2^8$ at $g_\infty=1$.  The function $C_8(s-17)$ is\footnote{Although this function vanishes to first order $s=20$, the function $c([214323412321],s)$ has a simple pole at $s=20$, so there is no contradiction to the statement of the proposition.}
\[C_8(s-17) = \frac{\left(\frac{s-26}{2}\right)_4}{\left(\frac{s-17}{2}-1\right)_5}.\]
The next intertwining operator, an application of $M([3],s)$, is spherical, and gives a factor of $\frac{\Gamma(s-23)}{\Gamma(s-19)}$, which has a simple pole at $s=20$.  Thus, the archimedean intertwiner $M_\infty([3214323412321],s)$ is finite and nonzero at $s=20$.

To do the final application of an intertwining operator, the $M_\infty([2],s)$, we use the technique of \cite[page 26]{pollackE8}.  Expressing $f_1^8 f_2^8$ in the $x,y$ basis, one gets 
\[f_1^8 f_2^8 = C_4 (x^{16} + y^{16}) + C_3 (x^{14} y^2 + x^{2}y^{14}) + C_2 (x^{12} y^{4} + x^{4}y^{12}) + C_1 (x^{10} y^{6} + x^{6}y^{10}) + C_0 (x^{8} y^{8})\]
for nonzero constants $C_k$, $0 \leq k \leq 4$.  For $0 \leq k \leq 4$, on the term multiplying $C_k$ the operator $M_\infty([2],s)$ now produces factors of the form
\[\frac{\Gamma_\R(s-27)}{\Gamma_\R(s-26)} \frac{\left(\frac{28-s}{2}\right)_k}{\left(\frac{s-26}{2}\right)_k}.\]
For $0 \leq k \leq 3$ this is zero, while for $k = 4$ this is finite and nonzero.  This completes our analysis of $M(w_{-1},s)$, and with it, the proposition.
\end{proof}

We now complete the proof of Proposition \ref{prop:JEis8}.
\begin{proof}[Proof of Proposition \ref{prop:JEis8}] Using Proposition \ref{prop:E8intert}, the proof of Proposition \ref{prop:JEis8} proceeds exactly as the proof of Corollary 4.1.2 of \cite{pollackE8}.  The only thing left to remark upon is the square integrability of $E_J(g,s=9;8)$.  For this, we apply Jacquet's criterion \cite[I.4.11 Lemma]{moeglinWaldspurger}.  Writing in terms of the simple rational roots, one has 
\[\lambda_s = (s-23,s-6,-5,-4)_E = (2s-29)\alpha_1 + (3s-57)\alpha_2 + (4s-84)\alpha_3 + (2s-46)\alpha_4\]
and
\[[1]\lambda_s = (s-23,-5,s-6,-4)_E = (s-28)\alpha_1 + (3s-57)\alpha_2 + (4s-84)\alpha_3 + (2s-46)\alpha_4.\]
Plugging in $s=9$, one sees that all the exponents are negative in these characters, and thus $E_J(g,s=9;8)$ is square integrable.
\end{proof}

\subsection{Proof of Theorem \ref{thm:ntmRat}} The proof of this theorem, in its entirety, was outlined at the beginning of section \ref{sec:ntm}.  The only thing left to prove is step (4) of this outline and to discuss the constant term $E_J^{N_J}(g,s=9;8)$ of $E_J(g,s=9;8)$ along the unipotent radical of the Heisenberg parabolic.  

For step (4), note that the constant term $E_J^V(g,s;8)$ for $g$ in the Levi subgroup $L_J(\A)$ is a sum of Eisenstein series on $L_J$, one for each element of the double coset 
\[ P_J(\Q)\backslash G_J(\Q)/ Q_J(\Q) = W_{C_3} \backslash W_{F_4} \slash W_{B_3}.\]
The Eisenstein series associated to the double coset $P_J(\Q) 1 Q_J(\Q)$ is $E_8(g,s)$.  

At $s=9$, we have computed the constant term of each of these Eisenstein series down to $P_0$, and it is clear that they are identified, because the two terms contributing to the constant term of $E_J(g,s=9;8)$ along $P_0$ are those that come from the elements of length $0$ and $1$ of $[W_{C_3}\backslash W_{F_4}]$.  Consequently, at $s=9$, the difference $E_J^V(g,s;8) - E_8(g,s)$ has vanishing constant term along $P_0$.  Because the difference $E_J^V(g,s;8) - E_8(g,s)$ is a sum of Eisenstein series on $L_J$, we conclude $E_J^V(g,s=9;8) = E_8(g,s=9)$.  This proves step $(4)$ of the outline above.

The constant term $E_J^{N_J}(g,s=9;8)$ is analyzed in \cite[Corollary 3.5.1]{pollackE8}.  The holomorphic weight $8$ Siegel Eisenstein on $H_J = GE_{7,3}$ appears, along with the constant $\frac{\zeta(9)}{(2\pi)^{8}}$.  As mentioned previously, this weight $8$ Eisenstein series is analyzed in \cite{kim}, who proves that it has rational Fourier coefficients. This completes the proof of Theorem \ref{thm:ntmRat}.

\section{The minimal modular form}\label{sec:minD} In this final section, we discuss the minimal modular form on $G = \SO(3,8k+3)$ in certain special cases, and an application to the construction of a distinguished modular form on $G' = \SO(3,8k+2)$.  The results of this section are, in a sense, the analogues of the results in \cite{pollackE8} with quaternionic $E_8$ replaced by the classical group $D_{4k+3,3}$.

In more detail, the group $G$ supports a minimal modular form $\theta$, that is spherical at all finite places.  The purpose of this section is to recognize $\theta$ as the value of an Eisenstein series on $G$, to compute its Fourier expansion, and to show that the restriction $\theta' = \theta|_{G'}$ of $\theta$ to $G'$ is a distinguished modular form on $G'$.  The fact that $\theta'$ is distinguished is an example of the simplest ``lifting law" from \cite{pollackLL}, the one in section 2.2 of \emph{loc cit}, whereas the distinguished and singular modular forms constructed in \cite{pollackE8} use the (more complicated) lifting laws considered in section 7 and section 8 of \cite{pollackLL}.

Let $\Theta_0$ denote the ring of Coxeter's octonions, so that $\Theta_0$ is the even unimodular quadratic lattice of dimension $8$.  For an integer $k \geq 1$, let $V_0 = H_0^3 \oplus \Theta_0^k$ be the fixed lattice inside $V = H^3 \oplus \Theta^k$.  Set $V' = H^2 \oplus \Theta^k$ and set $G = \SO(V)$.  Fix a vector $\omega \in V'$ with $q'(\omega) < 0$.  Denote by $V_\omega = (\Q\omega)^{\perp}$ the orthogonal complement of $\Q\omega$, so that $V = \Q \omega \oplus V_{\omega}$.  Set $G' = \SO(V_\omega) \simeq \SO(3,8k+2)$ and define $V_\omega' = V_\omega \cap V'$.

Let $\Phi_f$ be the characteristic function of $V_0 \otimes \widehat{\Z} \subseteq V \otimes \A_f$ and $E(g,s) = \pi^{-4k} E_{4k}(g,\Phi_f,s)$ the Eisenstein series which was studied in detail in section \ref{sec:Eis}.  The purpose of this section is to prove the following theorem.

\begin{theorem}\label{thm:minTypeD} The Eisenstein series $E(g,s)$ is regular at $s= 4k+1$ and $\theta(g) := E(g,s=4k+1)$ is a modular form of weight $\ell = 4k$ at this point.  The modular form $\theta$ has rational Fourier expansion with all rank two Fourier coefficients equal to $0$.  The restriction $\theta' = \theta|_{G'}$ is a modular form on $G'$ of weight $\ell$.  It is distinguished in the sense that if $\eta \in V'_\omega$ has $q'(\eta) \neq 0$, then the Fourier coefficient $a_{\theta'}(\eta) \neq 0$ implies $q'(\eta) \in (\Q^\times)^2(-q(\omega)).$\end{theorem}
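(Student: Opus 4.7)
The argument mirrors that of Theorem~\ref{thm:ntmRat}. Set $\ell=4k$, $n=8k+2$, so $\dim V'=8k+4$ is a multiple of $4$ and $\ell-n/2=-1$; the point $s=4k+1$ lies outside the range of absolute convergence for $E(g,s)$.

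\emph{Step 1: regularity and modular form property.} The constant term of $E(g,s)$ along $N$ at $s=4k+1$ is a sum of the inducing section, a holomorphic-type Eisenstein series $E^M$ on the Levi $\SO(V')$ as in Proposition~\ref{prop:EisConst}, and the intertwined section $M(w,s)f_\ell$. For the intertwined piece, the archimedean $c$-function $c_{4k}^{B_3}(s)$ of Proposition~\ref{prop:longInter} is finite and nonzero at $s=4k+1$ --- the simple zero of the Pochhammer $(0)_{2k}$ cancels the simple pole of $\Gamma(s-2-4k)=\Gamma(-1)$ --- whereas the finite $c$-function
\[
c(w_0,s)=\frac{\zeta(s-(4k+2))\,\zeta(s-(8k+3))}{\zeta(s)\,\zeta(s-(4k+1))}
\]
vanishes at $s=4k+1$ because $\zeta(-(4k+2))=0$ for $k\ge 1$, so $M(w,s)f_\ell$ is zero there. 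For $E^M$ one establishes the analogue of Proposition~\ref{prop:holEis} for $V'=H^2\oplus\Theta^k$ at $s=4k$: a parallel intertwining analysis on $\SO(V')$ shows that all nontrivial Weyl-cell contributions to the constant term along the minimal parabolic vanish (again via zeros of $\zeta$ at negative even integers), so $\pi^{-4k}E^M(g,s=4k)$ is a holomorphic weight $4k$ modular form with rational Fourier coefficients. Corollary~\ref{cor:eta0} then shows $D_{4k}$ annihilates the constant term of $\theta$, whence $D_{4k}\theta\equiv 0$ by Lemma~\ref{lem:Dfl}.

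\emph{Step 2: rational Fourier expansion and vanishing of rank two coefficients.} The constant term is rational by Step~1, and the rank one Fourier coefficients are rational by the argument of Proposition~\ref{prop:EisRk1}, with the secondary integral~\eqref{eqn:etaInt2} for isotropic $\eta$ handled by explicit unramified intertwiner and one-dimensional character integral computations as in the proof of Proposition~\ref{prop:B3type8}. For a rank two $\eta\in V'(\Q)$ the finite part of the Fourier coefficient is $1/\zeta(s-(4k+1))$ by Proposition~\ref{prop:Jfte}, finite at $s=4k+1$, but the archimedean Fourier transform of $f_{\ell,\infty}(wn(x),s=\ell+1)$ carries, by Corollary~\ref{cor:FTrank2}, a factor $1/\Gamma(\ell-n/2+1)=1/\Gamma(0)=0$; hence the archimedean contribution vanishes and $a_\theta(\eta)=0$ for every rank two $\eta$.

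\emph{Step 3: restriction and distinguishedness.} That $\theta'=\theta|_{G'}$ is a modular form of weight $4k$ on $G'$ follows from the argument of Proposition~\ref{prop:SO4const}: with $\p=\p'\oplus(V_3\otimes\Q\omega)$, the extra summand in $\widetilde D_{4k}\theta|_{G'}$ projects into a $K'$-summand of $\Vm_{4k}\otimes\p^\vee$ linearly independent from the image of the $\p'$-terms, so $D_{4k}\theta=0$ forces $D'_{4k}\theta'=0$. For distinguishedness, write $V'=\Q\omega\oplus V'_\omega$ orthogonally; Fourier inversion along $V'_\omega$ yields, for $\eta\in V'_\omega(\Q)$,
\[
a_{\theta'}(\eta)(g)=\sum_{t\in\Q} a_\theta(\eta+t\omega)(g).
\]
If $q'(\eta)\ne 0$, the $t=0$ term is a rank two Fourier coefficient of $\theta$ and vanishes by Step~2; any surviving term must have $t\ne 0$ and $q'(\eta+t\omega)=0$, i.e.\ $t^2=-q'(\eta)/q(\omega)$, forcing $q'(\eta)\in(\Q^\times)^2(-q(\omega))$.

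The main obstacles are making precise the regularity of $E^M$ outside its range of absolute convergence (the analogue of Proposition~\ref{prop:holEis}) and justifying the rank-two vanishing via the $1/\Gamma(0)$ factor in the analytically continued Corollary~\ref{cor:FTrank2}. Both reduce to the vanishing of $\zeta$ at negative even integers, which is exactly what singles out $s=4k+1$ as a ``minimal'' value.
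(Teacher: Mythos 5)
Your Step 1 matches the paper's proof closely (including the cancellation of the simple zero of $\left(\frac{s-\ell-1}{2}\right)_{\ell/2}$ against the pole of $\Gamma(s-2-m_0)$ in $c_\infty^{B_3}$, and the trivial zero $\zeta(-(4k+2))=0$ killing the finite $c$-function), and your Step 3 argument for distinguishedness --- summing $a_\theta(\eta+t\omega)$ over $t\in\Q$ and using the rank-two vanishing to force $t^2=-q'(\eta)/q(\omega)$ --- is a correct, self-contained unwinding of what the paper outsources to the lifting law of \cite[section 2.2]{pollackLL}. The genuine gap is in Step 2, the vanishing of the rank two Fourier coefficients. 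Your argument rests on reading off a factor $1/\Gamma(\ell-n/2+1)=1/\Gamma(0)=0$ from an ``analytically continued'' Corollary \ref{cor:FTrank2}. But that corollary (and Theorem \ref{thm:FTAinv}) is only established under $\ell\geq n/2$: the defining integral $I_v(x;\ell)$ of \eqref{eqn:Ivx} requires $A=\ell-n/2\geq 0$ for absolute convergence, and the paper is explicit about this restriction when it invokes the corollary in Proposition \ref{prop:B3type8} (there $\ell=8$, $n=10$, so $A=3$). Here $A=-1$, the density $\charf(q(\omega)>0)\,q(\omega)^{-1}K_v(\cdots)$ is not locally integrable near the light cone, and the heuristic ``$0\times\infty$'' computation (which correctly suggests the Fourier transform of the inducing section becomes a distribution supported on $q(\omega)=0$) is not a proof. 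On top of this, $s=4k+1$ is outside the range of absolute convergence, so even the unfolding of the rank two coefficient into an Euler product of local integrals as in \eqref{eqn:etaInt} needs justification, and the product of the unramified factors $1-p^{-(s-(4k+1))}$ does not converge at $s=4k+1$.

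The paper's actual mechanism for the rank two vanishing is entirely different and purely local: by \cite[Theorem 1.1]{magaardSavin} and \cite[Proposition 4.1, Corollary 4.2]{savin}, the spherical constituent of $\mathrm{Ind}_P^G(|\nu|^{4k+1})=\mathrm{Ind}_P^G(\delta_P^{1/2}|\nu|^{-1})$ at each finite place is the minimal representation of split $p$-adic $D_{4k+3}$, whose twisted Jacquet modules for rank two (anisotropic) characters vanish; hence the global rank two Fourier coefficients of $\theta$ are zero with no archimedean analysis needed. You should replace your $1/\Gamma(0)$ step with this input (or else carry out a genuine distributional analytic continuation, which the paper does not attempt). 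A related caution for your rank one rationality step: in Proposition \ref{prop:B3type8} the vanishing of \eqref{eqn:etaInt2} was also deduced from Corollary \ref{cor:FTrank2}, so that route is again unavailable here; the paper instead shows directly that the archimedean $c$-functions $c_j(w',s)$ of the intertwiner $M(w_{23}w_3w_{23}w_{12},s)$ all vanish at $s=4k+1$ while the finite part and the one-dimensional character integral stay finite.
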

\begin{proof} Most of the work is to check the relevant intertwining operators, which show that $E(g,s)$ is regular at $s=\ell+1$ and defines a modular form of weight $\ell$ at this point.  To do this, we proceed as in section \ref{sec:ntm} and consider first the case of the Eisenstein series $E_{V',\ell}(g,s)$ on $\SO(V')$ evaluated at $s=\ell = 4k$.  

The constant term of the Eisenstein series $E_{V',\ell}(g,s)$ along the minimal rational parabolic consists of four terms: the inducing section, and intertwining operators $M(w,s)$ applied to the inducing section, where $w = w_{12}, w_{2}w_{12}, w_{12}w_2 w_{12}$ in the notation of Proposition \ref{prop:holEis}.  For ease of notation, set $m_0 = 4k$ and $\zeta_{an}(s) = \zeta(s)\zeta(s-m_0+1)$.  These intertwining operators produce functions $c(w,s)$ as follows:
\begin{enumerate}
\item $c(w_{12},s) = \frac{\zeta(s-1)}{\zeta(s)} \frac{\Gamma_\R(s)}{\Gamma_\R(s)} \frac{\left(\frac{2-s}{2}\right)_{\ell/2}}{\left(\frac{s}{2}\right)_{\ell/2}}$
\item $c(w_2 w_{12},s) = c(w_{12},s) \frac{\zeta_{an}(s-m_0-1)}{\zeta_{an}(s-1)}\frac{\Gamma(s-m_0-1)}{\Gamma(s-1)}$
\item $c(w_{12}w_2 w_{12},s) = c(w_{2}w_{12},s) \frac{\zeta(s-2m_0-1)}{\zeta(s-2m_0)}\frac{\Gamma_\R(s-2m_0-1)}{\Gamma_\R(s-2m_0)} \frac{\left(\frac{2m_0+2-s}{2}\right)_{\ell/2}}{\left(\frac{s-2m_0}{2}\right)_{\ell/2}}$.
\end{enumerate}
One checks easily that these functions $c(w_{12},s)$, $c(w_{2}w_{12},s)$ and $c(w_{12}w_2w_{12},s)$ vanish at $s=\ell=m_0$.  Consequently, as in Proposition \ref{prop:holEis}, $E_{V',\ell}(g,s)$ is regular at $s=\ell$ and is the automorphic function associated to a holomorphic modular form of weight $\ell$ on $\SO(V')$ with rational Fourier coefficients.

Next, to see that $E_{\ell}(g,\Phi_f,s)$ is regular at $s=\ell+1 = 4k+1$, we consider its constant term along the parabolic $P=MN$, just as in Proposition \ref{prop:B3type8}.  Just as in the proof of this proposition, to see that $E_{\ell}(g,\Phi_f,s)$ is regular at $s=\ell+1$ and defines a modular form of weight $\ell$ at this point, it suffices to check that the $c$-function associated to the long intertwiner $M(w_0,s)$ vanishes at $s=\ell+1$.  The finite part of this intertwiner gives
\begin{align*} c_f(w_0,s) &= \frac{\zeta(s-1)\zeta(s-2)\zeta_{an}(s-2-m_0)\zeta(s-2-2m_0)\zeta(s-3-2m_0)}{\zeta(s)\zeta(s-1)\zeta_{an}(s-2)\zeta(s-1-2m_0)\zeta(s-2-2m_0)} \\ &= \frac{\zeta(s-2-m_0)\zeta(s-3-2m_0)}{\zeta(s)\zeta(s-1-m_0)}.\end{align*}
This function vanishes at $s=\ell+1=m_0+1=4k+1$.  The archimedean part of this intertwining operator was computed in Proposition \ref{prop:longInter}.  One obtains
\[c_\infty(w_0,s) = \frac{\left(\frac{s-\ell-1}{2}\right)_{\ell/2}}{\left(\frac{s-2}{2}\right)_{\ell/2+1}} \frac{\Gamma(s-2-m_0)}{\Gamma(s-2)} \frac{\left(\frac{s}{2}-1-m_0-\ell/2\right)_{\ell/2}}{\left(\frac{s-3}{2}-m_0\right)_{\ell/2+1}}.\]
This function is finite and nonzero at $s=m_0+1=\ell+1$.  Consequently, $c(w_0,s) = c_f(w_0,s) c_\infty(w_0,s)$ vanishes at $s=\ell+1$, so that $E_{\ell}(g,\Phi_f,s)$ is regular at $s=\ell+1$ and defines a modular form of weight $\ell$ at this point.

The function $\theta(g)$ is defined to be the value $\pi^{-\ell}E_{\ell}(g,\Phi_f,s=\ell+1) = \pi^{-4k}E_{4k}(g,\Phi_f,s=4k+1)$.  By \cite[Theorem 1.1]{magaardSavin} and \cite[Proposition 4.1, Corollary 4.2]{savin}, the minimal representation of split $p$-adic $D_{4k+3}$ occurs as the spherical sub in $Ind_{P}^{G}(|\nu|^{4k+1}) = Ind_{P}^{G}(\delta_P^{1/2} |\nu|^{-1})$.  By these cited results, it follows that the modular form $\theta(g)$ has vanishing rank two Fourier coefficients.  

The rationality of the rank one Fourier coefficients of $\theta(g)$ is treated similarly to the rationality of the rank one Fourier coefficients of the Eisenstein series considered in \ref{prop:B3type8}, but with a little more work.  Specifically, the rationality follows from the vanishing of the integral \eqref{eqn:etaInt2}.  As in the proof of Proposition \ref{prop:B3type8}, to see that this integral vanishes, we factorize it into an intertwining operator and a one-dimensional character integral.  The intertwining operator is associated to the element $w'=w_{23} w_{3} w_{23} w_{12}$ of the Weyl group, in the notation of Proposition \ref{prop:longInter}.  The finite part $M_f(w',s)$ produces a $c$-function
\begin{align*} c_f(w',s) &= \frac{\zeta(s-1)\zeta(s-2)\zeta_{an}(s-2-m_0)\zeta(s-2-2m_0)}{\zeta(s)\zeta(s-1)\zeta_{an}(s-2)\zeta(s-1-2m_0)} \\ &= \frac{\zeta(s-2-m_0)\zeta(s-2-2m_0)}{\zeta(s)\zeta(s-1-m_0)},\end{align*}
which is finite and nonzero at $s=m_0+1$.  Similar to the evaluation of $M(w_{-1},s)$ in the proof of Proposition \ref{prop:E8intert}, the archimedean part of the intertwinter $M(w',s)$ produces $c$-functions
\[c_j(w',s)=\frac{\left(\frac{s-\ell-1}{2}\right)_{\ell/2}}{\left(\frac{s-2}{2}\right)_{\ell/2+1}} \frac{\Gamma(s-2-m_0)}{\Gamma(s-2)} \frac{\Gamma_{\R}(s-2-2m_0)}{\Gamma_\R(s-1-2m_0)} \frac{\left(\frac{3+2m_0-s}{2}\right)_{j}}{\left(\frac{s-1-2m_0}{2}\right)_{j}}\]
for integers $j$ with $0 \leq j \leq \ell/2=m_0/2$.  All of these functions $c_j(w',s)$ vanish at $s=m_0+1=\ell+1$.  Finally, the one-dimensional character integrals produce holomorphic functions of $s$ divided by $\zeta(s-2-2m_0) \Gamma_\R(s-2-2m_0)$.  This latter function is finite at $s=m_0+1$.  Combining this with the calculation of the functions $c(w',s)$, one sees that the integral \eqref{eqn:etaInt2} vanishes at $s=m_0+1$, as desired.  The rationality of the Fourier expansion of $\theta(g)$ follows.

The fact that $\theta'$ is a modular form of weight $\ell$ follows by a simple analysis of the differential operators $D_\ell$ on $G$ and on $G'$ as in Proposition \ref{prop:SO4const}.  Finally, that $\theta'$ is distinguished follows from the discussion in \cite[section 2.2]{pollackLL}.  This completes the proof of the theorem.
\end{proof}

\appendix

\section{Proofs of selected results}
This appendix contains the proofs of some of the results stated in the main body of the text but not proved there.

\subsection{Proofs from section \ref{sec:MFFE}}\label{subsec:appFE}

\begin{proof}[Proof of proposition \ref{thm:Dcoefs}] We first write out $\widetilde{D_{\ell}}$ in coordinates.  We obtain
\begin{align*}
\widetilde{D} F &= \sum_{j=1}^{n} D^M_{iv_1-v_2,u_j} F \otimes ((-1/2) y \otimes y \otimes u_j^{\vee}) + (u_+ \wedge u_j) F \otimes ((\sqrt{2}/4) (x \otimes y + y \otimes x) \otimes u_j^{\vee}) \\  & \,\, + D^{M}_{iv_1 +v_2,u_j}F \otimes ((-1/2) x \otimes x \otimes u_j^{\vee})  \\ & \,\, + ((iv_1 -v_2) \wedge u_{-})F \otimes ((-1/2) y \otimes y \otimes u_{-}^\vee) + (u_{+} \wedge u_{-} F) \otimes ((\sqrt{2}/4)( x \otimes y + y \otimes x) \otimes u_{-}^\vee) \\ &\,\, + ((iv_1 + v_2) \wedge u_{-} F) ((-1/2) x \otimes x \otimes u_{-}^{\vee}).\end{align*}

To compute the operator $D_{\ell}$ in coordinates, we must apply the contraction $\pr: \Vm_\ell \otimes \p^\vee \rightarrow (S^{2\ell}(Y_2) \oplus S^{2\ell-2}(Y_2)) \boxtimes V_{n+1}$.  With our coordinates $x,y$ this contraction is $[x^{\ell + v}][y^{\ell -v}] \otimes y \mapsto [x^{\ell+v-1}][y^{\ell-v}]$ and $[x^{\ell + v}][y^{\ell -v}] \otimes x \mapsto -[x^{\ell+v}][y^{\ell-v-1}]$.  For $D_{\ell}$ we therefore obtain

\begin{align*} 2 D_{\ell} F &= \sum_{j=1}^{n} u_j^\vee \otimes \left( \sum_{-\ell \leq v \leq \ell}{D^{M}_{iv_1 -v_2,u_j} F_v (-[x^{\ell+v-1}][y^{\ell-v}] \otimes y)}\right. \\ &\, + \sum_{-\ell \leq v \leq \ell}{\frac{\sqrt{2}}{2} (u_{+} \wedge u_{j}) F_v ([x^{\ell+v-1}][y^{\ell-v}] \otimes x - [x^{\ell+v}][y^{\ell-v-1}] \otimes y)} \\  &\, \left. + \sum_{-\ell \leq v \leq \ell}{D^{M}_{iv_1+v_2,u_j}F_v ([x^{\ell+v}][y^{\ell-v-1}] \otimes x)}\right) \\
&\, + u_{-}^\vee \otimes \left( \sum_{-\ell \leq v \leq \ell}{((iv_1-v_2) \wedge u_{-} F)_v (-[x^{\ell+v-1}][y^{\ell-v}] \otimes y)} \right. \\ &\,+ \sum_{-\ell \leq v \leq \ell}{\frac{\sqrt{2}}{2} (u_{+} \wedge u_{-}) F_v ([x^{\ell+v-1}][y^{\ell-v}] \otimes x - [x^{\ell+v}][y^{\ell-v-1}] \otimes y)} \\ &\, + \left. \sum_{-\ell \leq v \leq \ell}{((iv_1 + v_2) \wedge u_{-} F)_v ( [x^{\ell+v}][y^{\ell-v-1}] \otimes x)}.\right)
\end{align*}

From the fact that $E \cdot ([x^{\ell+v}][y^{\ell-v}]) = (\ell+v+1) [x^{\ell+v+1}][y^{\ell-v-1}]$ and $F \cdot ([x^{\ell+v}][y^{\ell-v}]) = (\ell-v+1) [x^{\ell+v-1}][y^{\ell-v+1}]$ we get
\[\left((iv_1 - v_2) \wedge u_{-}\right) F = \sum_{-\ell \leq v \leq \ell}{(-2 D^{V'}_{t m (iv_1-v_2)} F_v + \sqrt{2}(\ell+v)F_{v-1})[x^{\ell+v}][y^{\ell-v}]}\]
and
\[\left((iv_1 + v_2) \wedge u_{-}\right) F = \sum_{-\ell \leq v \leq \ell}{(-2 D^{V'}_{t m (iv_1+v_2)} F_v + \sqrt{2}(\ell-v)F_{v+1}) [x^{\ell+v}][y^{\ell-v}]}.\] 
The proposition follows. \end{proof} 

\begin{proof}[Conclusion of proof of Theorem \ref{thm:KBessel}] In subsection \ref{subsec:FEMF} we proved that the functions $\Wh_{\eta}$ of Definition \ref{def:Weta} satisfied the correct $(K \cap M)$-equivariance property.  We now complete the rest of the proof of Theorem \ref{thm:KBessel}.

We begin by considering the differential equations of Proposition \ref{thm:Dcoefs}.  Assume for now that $t > 0$.  From Proposition \ref{thm:Dcoefs}, we obtain that the $F_v$ satisfy the following differential-difference equations:
\begin{enumerate}
\item $(t\partial_t - (\ell+v))F_{v-1} = -u_\eta(t,m) F_v$
\item $(t\partial_t - (\ell-v+1))F_v = - u_\eta(t,m)^* F_{v-1}$
\item $D^{M}_{iv_1-v_2,u_j} F_v = -i\sqrt{2} t (\eta,m u_j) F_{v-1}$
\item $D^{M}_{iv_1+v_2,u_j} F_{v-1} = - i \sqrt{2} t (\eta, m u_j) F_v$.
\end{enumerate}

Define $F_v^0$ by the equality $F_v = t^{\ell+1} F_v^0$.  Then we obtain 
\begin{align*} ((t\partial_t)^2 - v^2)F_v^0 &= (t\partial_t - v)(t \partial_t +v) F_v^0 \\ &= (t\partial_t - v)\left(-u_\eta(t,m)^* F_{v-1}^0\right) \\ &= -u_\eta(t,m)^* (t\partial_t -v +1) F_{v-1}^0 \\ &= |u_\eta(t,m)|^2 F_v^0.\end{align*}
Because $F_v(t,m)$ is of moderate growth at $t \rightarrow \infty$, we deduce that $F_v^0(t,m) = C_v(m) K_v(|u_\eta(t,m)|)$ for some function $C_v(m)$ of $m$.

Now, because $(y\partial_y + v)K_v(y) = -yK_{v-1}(y)$, $(t\partial_t + v)K_v(t |\mu|) = -|\mu| t K_{v-1}(|\mu| t)$ if $\mu$ is independent of $t$.  Set $\mu = \sqrt{2} i(\eta,m (iv_1-v_2))$.  Thus
\begin{align*} -(|\mu| t) C_v(m) K_{v-1}(|\mu| t) &= (t\partial_t + v) F_v^0 \\ &=- u_\eta(t,m)^* F_{v-1}^0 \\ & = -u_\eta(t,m)^* C_{v-1}(m) K_{v-1}(|u_\eta(t,m)|).\end{align*}
Thus 
\[C_v(m) = \left(\frac{u_\eta(t,m)^*}{|u_\eta(t,m)|}\right) C_{v-1}(m) = \left(\frac{|u_\eta(t,m)|}{u_\eta(t,m)}\right) C_{v-1}(m).\]
We conclude that $C_v(m) = C_0(m) \left(\frac{|u_\eta(t,m)|}{u_\eta(t,m)}\right)^v$ for some function $C_0(m)$ that does not depend on $t$.

To see that $C_0(m) = C$ is a constant, independent of $m$, we use the final two differential equations involving $D^{M}_{iv_1\pm v_2, u_j}$.  One verifies immediately from the definitions that
\begin{equation}\label{eqn:Dtm0} D^{M}_{iv_1 \pm v_2,u_j}\left\{m \mapsto (\eta, m(iv_1-v_2))\right\} = -(iv_1 \pm v_2, iv_1-v_2) (\eta, m u_j).\end{equation}
This is $0$ for $D^M_{iv_1-v_2,u_j}$ and $2 (\eta, m u_j)$ for $D^{M}_{iv_1+v_2,u_j}$.  It follows that
\begin{equation}\label{eqn:Dtm1}D^M_{iv_1+v_2,u_j}(|u_\eta(t,m)|) = \frac{|u_\eta(t,m)|}{u_\eta(t,m)} \sqrt{2} ti(\eta, m u_j)\end{equation}
and
\begin{equation}\label{eqn:Dtm2}D^M_{iv_1-v_2,u_j}(|u_\eta(t,m)|) = \left(\frac{|u_\eta(t,m)|}{u_\eta(t,m)}\right)^{-1} \sqrt{2} ti(\eta, m u_j).\end{equation}
From \eqref{eqn:Dtm1} and \eqref{eqn:Dtm2} and the third and fourth enumerated equations applied to the case $v=0$, resp. $v=1$, one obtains that $D^M_{iv_1 \pm v_2, u_j} C_0(m) = 0$.  By the $(K \cap M)$-equivariance proved above, we know that $C_0(mk) = C_0(m)$ for all $k \in K \cap M$.  Combined with the differential equations $D^M_{iv_1 \pm v_2, u_j} C_0(m) = 0$ for all $j$, this gives that $C_0(m) = C$ is a constant, as desired.

Let us now check that $F(t,m) = \sum_{\ell \leq v \leq \ell}{F_v(t,m) \frac{x^{\ell+v} y^{\ell-v}}{(\ell+v)!(\ell-v)!}}$ with 
\[F_v(t,m) = t^{\ell+1} \left(\frac{|u_\eta(t,m)|}{u_\eta(t,m)}\right)^{v} K_v(|u_\eta(t,m)|)\]
satisfies the above differential equations on the connected component of the identity.  To see this, first note that the identity $(y\partial_y + v)K_v(y) = -yK_{v-1}(y)$ implies that the $F_{v}$ satisfy the second difference-differential equation.  Similarly, the identity $(y\partial_y - v)K_v(y) = -yK_{v+1}(y)$ implies that the $F_{v}$ satisfy the first difference-differential equation.  To see that this $F(t,m)$ satisfies the third and fourth difference-differential equations, first note that
\[D^M_{iv_1 +v_2,u_j}\left(\frac{|u_\eta(t,m)|}{u_\eta(t,m)}\right) = -\frac{|u_\eta(t,m)|}{u_\eta(t,m)^2} \sqrt{2} ti(\eta,mu_j).\]
Moreover, from $\partial_y K_v(y) = \frac{v}{y}K_v(y) - K_{v+1}(y)$, one obtains
\[D^M_{iv_1+v_2,u_j} K_v(|u_\eta(t,m)|) = \left(\frac{v}{|u_\eta(t,m)|} K_v(|u_\eta(t,m)|) - K_{v+1}(|u_\eta(t,m)|)\right) \left(\frac{|u_\eta(t,m)|}{u_\eta(t,m)}\right) \sqrt{2} ti(\eta,m u_j).\]
Combining these two equations gives
\[D^M_{iv_1+v_2,u_j}\left( \left(\frac{|u_\eta(t,m)|}{u_\eta(t,m)}\right)^{v} K_v(|u_\eta(t,m)|)\right) = -\sqrt{2} ti (\eta,m u_j) \left( \left(\frac{|u_\eta(t,m)|}{u_\eta(t,m)}\right)^{v+1} K_{v+1}(|u_\eta(t,m)|)\right)\]
which shows that the $F(t,m)$ satisfies the fourth enumerated differential equation.  The case of the third equation is similar.

Finally, we consider the condition $(\eta, \eta) \geq 0$.  By Lemma \ref{lem:etaPos}, we must check that $F(t,m) \equiv 0$ if there exists $m \in \SO(V')(\R)$ so that $(\eta, m(iv_1-v_2)) = 0$.  This follows by the argument of \cite[Proposition 8.2.4]{pollackQDS}.\end{proof}

\bibliographystyle{amsalpha}
\bibliography{QDS_Bib2} 

\newcommand{\etalchar}[1]{$^{#1}$}
\providecommand{\bysame}{\leavevmode\hbox to3em{\hrulefill}\thinspace}
\providecommand{\MR}{\relax\ifhmode\unskip\space\fi MR }
\providecommand{\MRhref}[2]{%
  \href{http://www.ams.org/mathscinet-getitem?mr=#1}{#2}
}
\providecommand{\href}[2]{#2}
\begin{thebibliography}{GGK{\etalchar{+}}19}

\bibitem[Gan00a]{ganATM}
Wee~Teck Gan, \emph{An automorphic theta module for quaternionic exceptional
  groups}, Canad. J. Math. \textbf{52} (2000), no.~4, 737--756. \MR{1767400}

\bibitem[Gan00b]{ganSW}
\bysame, \emph{A {S}iegel-{W}eil formula for exceptional groups}, J. Reine
  Angew. Math. \textbf{528} (2000), 149--181. \MR{1801660}

\bibitem[GGK{\etalchar{+}}19]{GGKPS}
Dmitry Gourevitch, Henrik P.~A. Gustafsson, Axel Kleinschmidt, Daniel Persson,
  and Siddhartha Sahi, \emph{Fourier coefficients of minimal and
  next-to-minimal automorphic representations of simply-laced groups}, 2019.

\bibitem[GGS02]{ganGrossSavin}
Wee~Teck Gan, Benedict Gross, and Gordan Savin, \emph{Fourier coefficients of
  modular forms on {$G_2$}}, Duke Math. J. \textbf{115} (2002), no.~1,
  105--169. \MR{1932327}

\bibitem[GR07]{GR}
I.~S. Gradshteyn and I.~M. Ryzhik, \emph{Table of integrals, series, and
  products}, seventh ed., Elsevier/Academic Press, Amsterdam, 2007, Translated
  from the Russian, Translation edited and with a preface by Alan Jeffrey and
  Daniel Zwillinger, With one CD-ROM (Windows, Macintosh and UNIX).
  \MR{2360010}

\bibitem[GRS97]{ginzburgRallisSoudry}
David Ginzburg, Stephen Rallis, and David Soudry, \emph{On the automorphic
  theta representation for simply laced groups}, Israel J. Math. \textbf{100}
  (1997), 61--116. \MR{1469105}

\bibitem[GW94]{grossWallach1}
Benedict~H. Gross and Nolan~R. Wallach, \emph{A distinguished family of unitary
  representations for the exceptional groups of real rank {$=4$}}, Lie theory
  and geometry, Progr. Math., vol. 123, Birkh\"auser Boston, Boston, MA, 1994,
  pp.~289--304. \MR{1327538}

\bibitem[GW96]{grossWallach2}
\bysame, \emph{On quaternionic discrete series representations, and their
  continuations}, J. Reine Angew. Math. \textbf{481} (1996), 73--123.
  \MR{1421947}

\bibitem[Hel84]{helgason}
Sigurdur Helgason, \emph{Groups and geometric analysis}, Pure and Applied
  Mathematics, vol. 113, Academic Press, Inc., Orlando, FL, 1984, Integral
  geometry, invariant differential operators, and spherical functions.
  \MR{754767}

\bibitem[Kar79]{karelAJM}
Martin~L. Karel, \emph{Functional equations of {W}hittaker functions on
  {$p$}-adic groups}, Amer. J. Math. \textbf{101} (1979), no.~6, 1303--1325.
  \MR{548883}

\bibitem[Kim93]{kim}
Henry~H. Kim, \emph{Exceptional modular form of weight {$4$} on an exceptional
  domain contained in {${\bf C}^{27}$}}, Rev. Mat. Iberoamericana \textbf{9}
  (1993), no.~1, 139--200. \MR{1216126}

\bibitem[KM11]{kobayashiMano}
Toshiyuki Kobayashi and Gen Mano, \emph{The {S}chr\"{o}dinger model for the
  minimal representation of the indefinite orthogonal group {${O}(p,q)$}}, Mem.
  Amer. Math. Soc. \textbf{213} (2011), no.~1000, vi+132. \MR{2858535}

\bibitem[KO03]{kOIII}
Toshiyuki Kobayashi and Bent {O}rsted, \emph{Analysis on the minimal
  representation of {${O}(p,q)$}. {III}. {U}ltrahyperbolic equations on
  {${R}^{p-1,q-1}$}}, Adv. Math. \textbf{180} (2003), no.~2, 551--595.
  \MR{2020552}

\bibitem[KS15]{kobayashiSavin}
Toshiyuki Kobayashi and Gordan Savin, \emph{Global uniqueness of small
  representations}, Math. Z. \textbf{281} (2015), no.~1-2, 215--239.
  \MR{3384868}

\bibitem[MgW95]{moeglinWaldspurger}
C.~M\oe~glin and J.-L. Waldspurger, \emph{Spectral decomposition and
  {E}isenstein series}, Cambridge Tracts in Mathematics, vol. 113, Cambridge
  University Press, Cambridge, 1995, Une paraphrase de l'\'{E}criture [A
  paraphrase of Scripture]. \MR{1361168}

\bibitem[MS97]{magaardSavin}
K.~Magaard and G.~Savin, \emph{Exceptional {$\Theta$}-correspondences. {I}},
  Compositio Math. \textbf{107} (1997), no.~1, 89--123. \MR{1457344}

\bibitem[Pol18a]{pollackLL}
Aaron Pollack, \emph{Lifting laws and arithmetic invariant theory}, Camb. J.
  Math \textbf{6} (2018), no.~4, 347--449.

\bibitem[Pol18b]{pollackE8}
\bysame, \emph{The minimal modular form on quaternionic ${E}_8$}.

\bibitem[Pol19a]{pollackQDS}
\bysame, \emph{The {F}ourier expansion of modular forms on quaternionic
  exceptional groups}, Duke Math. Journal (to appear) (2019).

\bibitem[Pol19b]{pollackG2}
Aaron Pollack, \emph{A quaternionic saito-kurokawa lift and cusp forms on
  ${G}_2$}, 2019.

\bibitem[Sav94]{savin}
Gordan Savin, \emph{Dual pair {$G_{J}\times{PGL}_2$} [where] {$G_{J}$} is the
  automorphism group of the {J}ordan algebra {${J}$}}, Invent. Math.
  \textbf{118} (1994), no.~1, 141--160. \MR{1288471}

\bibitem[Shi82]{shimura}
Goro Shimura, \emph{Confluent hypergeometric functions on tube domains}, Math.
  Ann. \textbf{260} (1982), no.~3, 269--302. \MR{669297}

\bibitem[Shu95]{shurman}
Jerry Shurman, \emph{Fourier coefficients of an orthogonal {E}isenstein
  series}, Pacific J. Math. \textbf{168} (1995), no.~2, 345--381. \MR{1339957}

\bibitem[SW11]{savinWeissman}
Gordan Savin and Martin~H. Weissman, \emph{Dichotomy for generic supercuspidal
  representations of {$G_2$}}, Compos. Math. \textbf{147} (2011), no.~3,
  735--783. \MR{2801399}

\bibitem[Wei06]{weissman}
Martin~H. Weissman, \emph{{$D_4$} modular forms}, Amer. J. Math. \textbf{128}
  (2006), no.~4, 849--898. \MR{2251588}

\end{thebibliography}

\end{document}